\numberwithin{equation}{section}
\setlist[enumerate,1]{label={(\arabic*)}}
\setlist[enumerate,2]{label={(\roman*)}}
\renewcommand{\Im}{\text{Im}}
\renewcommand{\Re}{\text{Re}}
\newtheorem{lemma}{Lemma}[section]
\newtheorem{theo}{Theorem}[section]
\title{Robust optimization of control parameters for WEC arrays using stochastic methods}
\author[1]{Marco Gambarini}
\author[1]{Gabriele Ciaramella}
\author[1]{Edie Miglio}
\author[2]{Tommaso Vanzan}
\affil[1]{MOX, Dipartimento di Matematica, Politecnico di Milano, Piazza Leonardo da Vinci 32, 20133 Milano, Italy}
\affil[2]{CSQI Chair, Institute of Mathematics, Ecole Polytecnique Fédérale de Lausanne, Lausanne, 1015, Switzerland}
\date{}
\begin{document}

\maketitle 

\begin{abstract}
    This work presents a new computational optimization framework for the robust control  of parks of Wave Energy Converters (WEC) in irregular waves. The power of WEC parks is maximized with respect to the individual control damping and stiffness coefficients of each device. The results are robust with respect to the incident wave direction, which is treated as a random variable. Hydrodynamic properties are computed using the linear potential model, and the dynamics of the system is computed in the frequency domain. A slamming constraint is enforced to ensure that the results are physically realistic. We show that the stochastic optimization problem is well posed. Two optimization approaches for dealing with stochasticity are then considered: stochastic approximation and sample average approximation.
    The outcomes of the above mentioned methods in terms of accuracy and computational time are presented. The results of the optimization for complex and realistic array configurations of possible engineering interest are then discussed. Results of extensive numerical experiments demonstrate the efficiency of the proposed computational framework.
\end{abstract}

\section{Introduction}

Ocean waves and tides are a renewable source of energy with large predicted growth over the next years and decades. Proposals of the European Commission \cite{eurocomm2020} regard as ``realistic and achievable'' a target of 1 GW installed capacity by 2030 and of 40 GW by 2050 for ocean energy.\footnote{For comparison, the installed wind energy capacity in Europe was 236 GW as of 2021 \cite{windeurope2022}.} A massive deployment of wave energy is expected to drive costs down and make this choice competitive, especially for islands and offshore applications \cite{oceaneuroreport}. Since single devices for wave energy conversion have small production capabilities and a limited possibility of scaling up compared to other technologies such as wind turbines \cite{Babarit2012}, it is crucial from the economical point of view to install them in arrays. 
The relatively small distance between devices in arrays implies the appearance of mutual interactions. The result of these interactions is known as the park effect \cite{Babarit2013}, and it has to be carefully taken into account in the design process. 

In order to better utilize the energy resource and improve survivability in extreme conditions, control strategies of devices in the array need to be appropriately designed. In particular, hydrodynamic interactions in the controlled system should be favourable in terms of extracted power. The most common control techniques for wave energy converters are presented in \cite{arenas2019}, reviewing constant and time-varying damping control, reactive control, latching and model predictive control. Classical results obtained through frequency domain analysis for regular waves with control parameters kept constant in time are presented in \cite{Falnes2020}. Strategies for irregular waves (in time domain) and related results are discussed in \cite{Korde2016}.
In an array, the same control parameters may be assigned to all devices, or each device may be assigned its own. The latter approach is referred to as individual optimization and it has been performed in \cite{Backer2010} and \cite{Sinha2016} using SQP (Sequential Quadratic Programming), optimizing for a single wave direction and then checking the performance for different directions.
Another aspect of array design and optimization, that will not be treated in this work, is the layout of devices. Layout optimization can be performed either before or together with control optimization. For more details, see the reviews \cite{Yang2022, Goeteman2020}. 

In this work, we present a numerical strategy for the robust optimization (maximization) of the power production of arrays with reactive control: damping and stiffness coefficients of the generators are optimized. We allow each body to have its own control coefficients and assume that the corresponding parameters are constant in time. As shown in \cite{Eriksson2007}, damping is actually a nonlinear function of the velocity for real electrical generators. Our assumptions, though, allow the use of a linear framework, and the results can still be a guide for the design process. We also allow control stiffnesses to assume negative values, meaning that the control action must be able to counteract the hydrostatic restoring force, that is the force acting on a floating body when it is displaced from its buoyancy equilibrium position \cite[Sec. 5.9]{Falnes2020}. Even though a negative stiffness could seem counterintuitive or unphysical, some technologies are in fact capable of achieving this effect: see, e.g., \cite{Todalshaug2016,Zhang2019}. For this reason, we do not exclude apriori this case from the set of admissible solutions. 
Moreover, the optimal solutions must fulfill the slamming constraint, meaning that devices must not leave the water, and in some cases a constraint of positive stiffness is added, to assess the benefit of using negative stiffness technologies.

To obtain good performances in real scenarios, the result of optimization needs to be robust against the uncertainty in the incident waves direction. 
To the best of our knowledge, this problem has never been addressed in the literature.
We formulate a stochastic (robust) optimal control problem governed by linear hydrodynamic equations, describing the physics of the system, and characterized by control and state constraints.
The numerical solution of this optimization problem requires an adequate treatment of different issues.
A reduced approach based on the adjoint equation allows us to work in the space of solutions of the hydrodynamic equations and consider the control as the only optimization variable. Stochasticity is addressed by employing two
different approaches suitable for stochastic optimization problems: SAA (Sample Average Approximation) and SA (Stochastic Approximation) \cite{Shapiro2014}.
The slamming (state) constraint is treated by using a penalty approach, while the stiffness (control) constraint is enforced by projection.
Therefore, our proposed numerical computation framework is obtained by two nested iterative processes. The outer iteration is essentially a quadratic penalty method increasing at each iteration the weight of the penalty term for the slamming constraint. 
At each penalty iteration a stochastic optimal control problem is solved by a projection method based either on SA or SAA.
The performances obtained by individually optimizing the coefficients of each device are compared with those obtained by setting the coefficients of all devices equal to the tuning parameters of the isolated case. We show that sets of parameters corresponding to feasible solutions for isolated devices can lead to unfeasible (incompatible with the constraints) solutions when the devices are installed in arrays, and that the optimization process leads to sensible increases of the power output in most cases.

The paper is organized as follows. In Section~\ref{sec:modeling}, we detail the hydrodynamic and mechanical models used and discuss the constraints. In Section~\ref{sec:optmethods}, we study the well-posedness of the problem and present optimization methods and algorithms. Sections~\ref{sec:numexp} and \ref{sec:2peak} contain numerical experiments performed with the proposed algorithms. In Section~\ref{sec:concl} the obtained results are commented and conclusions are drawn. 

\section{Modeling of WEC}\label{sec:modeling}
\subsection{Hydrodynamic modeling} \label{sec:hydro}

To model the hydrodynamic behaviour of WEC (Wave Energy Converter) arrays, we use linear potential wave theory \cite{Chakrabarti1987}.
In particular, we make the kinematic assumption of irrotational flow (zero vorticity), which implies the existence of a scalar potential $\phi$ whose gradient is the flow velocity. This assumption is well verified for external flows, outside boundary layers \cite{Kundu2012}.
Under the assumption that the displacements of the free surface and of the surfaces of floating bodies are small, boundary conditions can be linearized and enforced at the reference surfaces. This avoids the difficulty of dealing with moving boundaries and permits solving the problem in the frequency domain.
Finally, we assume that viscous effects can be neglected.

\begin{figure}[h]
    \centering
    \begin{tikzpicture} 
    \draw (0, 0) -- (2, 0);
    \node (dev1) at (2.5, -0.8) {$\Gamma_{d,1}$};
    \draw[very thick] (2, 0) -- (2, -0.5) -- (3, -0.5) -- (3, 0);
    \draw (3, 0) -- (4.5, 0);
    \node (dev1) at (5., -0.8) {$\Gamma_{d,2}$};
    \draw[very thick] (4.5, 0) -- (4.5, -0.5) -- (5.5, -0.5) -- (5.5, 0);
    \draw (5.5, 0) -- (6, 0);
    \node (dots) at (6.6, -0.8) {$\dots$};
    \draw (7, 0) -- (7.5, 0);
    \node (dev1) at (8., -0.8) {$\Gamma_{d,N_b}$};
    \draw[very thick] (7.5, 0) -- (7.5, -0.5) -- (8.5, -0.5) -- (8.5, 0);
    \draw (8.5, 0) -- (10.5, 0);
    \draw[dashed] (10.5, 0) -- (10.5, -3);
    \draw (10.5, -3) -- (0, -3);
    \draw[dashed] (0, -3) -- (0,  0);
    \node (omega) at (5, -2) {$\Omega$};
    \node (surf) at (1., 0.2) {$\Gamma_s$};
    \node (bottom) at (1., -3.3) {$\Gamma_b$};
    \end{tikzpicture}
    \caption{Domain of the hydrodynamic problem (slice)}
    \label{fig:domain}
\end{figure}
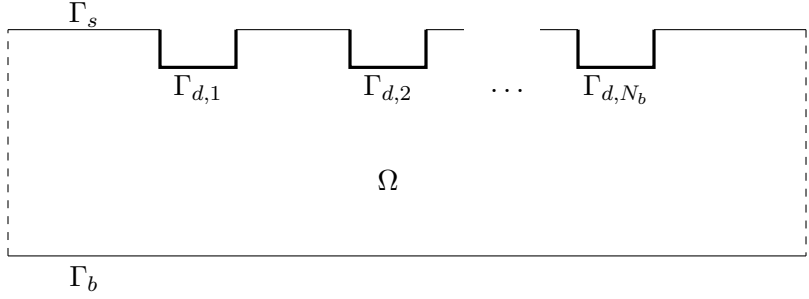

For a single frequency $\omega$, the potential is written as $\phi(\bm{x}, t) = \Re\left[\hat{\phi}(\bm{x}) \exp(-i\omega t) \right]$ and $\hat{\phi}$ is the solution of the hydrodynamic problem 

\begin{subnumcases}{}
\Delta \hat{\phi} = 0 & \text{in $\Omega$}\\[.5em]
\dfrac{\partial \hat{\phi}}{\partial n}  =0 & \text{on $\Gamma_b$}\\[.5em]
\dfrac{\partial \hat{\phi}}{\partial z} - \dfrac{\omega^2}{g} \hat{\phi} = 0 & \text{on $\Gamma_s$}\\[.5em]
\dfrac{\partial \hat{\phi}}{\partial n} = \hat{\bm{v}}_\ell \cdot \bm{n} & \text{on $\Gamma_{d,\ell}$, $\ell = 1, \dots, N_b$} \label{eq:bcbody},
\end{subnumcases}
where $\Omega \subset \mathbb{R}^3$ is the fluid domain, bounded by the sea bottom surface $\Gamma_b$, the reference free surface $\Gamma_s$ and the device immersed surfaces $\Gamma_{d,\ell}$, and unbounded on the lateral sides (see Fig.~\ref{fig:domain}).
The complex amplitude of the velocity of the $\ell$-th device is denoted by $\hat{\bm{v}}_\ell$. We assume that the bodies are restricted to vertical motion (heave). Hence, we have 
\begin{equation*}
\hat{\bm{v}}_\ell = [0, 0, \hat{v}_\ell] = [0, 0, -i\omega \hat{\zeta}_\ell],
\end{equation*}
where $\hat{\zeta}_\ell$ is the complex amplitude of the vertical position of the $\ell$-th device.
The free surface elevation $\hat{\eta}$ can be computed using the kinematic condition
$\hat{\eta} = i \hat{\phi} \omega/g $.
For the $\ell$-th device, the dynamic problem in the $z$ direction is
\begin{equation}
(-\omega^2 m_\ell + i \omega c_\ell + k_\ell + s_\ell)\hat{\zeta}_\ell = \hat{F}_\ell
\label{eq:dynsingledevice}
\end{equation} 
where $m_\ell$ is the mass, $k_\ell$ is the combined hydrostatic and mechanical stiffness, $c_\ell$ and $s_\ell$ are the equivalent damping and stiffness coefficients of the electrical generator, respectively, and $\hat{F}_\ell$ is the vertical component of the hydrodynamic force. The latter is obtained from the potential $\hat{\phi}$:
\begin{equation}
\hat{F}_\ell = -i\omega \rho \int_{\Gamma_{d,\ell}} \hat{\phi} n_z \, \mathrm{d\Gamma},
\label{eq:forceint}
\end{equation}
$n_z$ being the vertical component of the outer surface normal of the body. This is a consequence of the unsteady, linearized Bernoulli equation, which holds thanks to the assumption of inviscid flow.

The equations above show that there is a two-way coupling between the hydrodynamic problem and the mechanical problem. The two may be solved monolithically. However, for the purpose of control design, it is more common and insightful to define intermediate hydrodynamic quantities, namely added mass, radiation damping and excitation force \cite{Falnes2020}. 
In this way, once the hydrodynamic problem has been solved to determine such quantities, the dynamic problem can be treated independently. To apply this approach, the potential is split into the sum of three harmonic functions: an incident potential $\hat{\phi}_0$, a diffraction potential $\hat{\phi}_d$ and a radiation potential $\hat{\phi}_r$. Given a wave of direction $\theta$ and height $H$, the complex amplitude of the incident potential $\hat{\phi}_0$ is given by \cite[Sec. 3.5]{Chakrabarti1987}:
\begin{equation}
\hat{\phi}_0(x, y, z) = -i \dfrac{H}{2} \dfrac{g}{\omega} \dfrac{\cosh[k(z+h)]}{\cosh(kh)} \exp[ik(x\cos\theta + y\sin\theta)].
\label{eq:incpotential}
\end{equation}
This function satisfies the Laplace equation in $\Omega$ and the boundary conditions on the bottom and the free surface, but not the boundary condition on device surfaces. We can then rewrite \eqref{eq:bcbody} as
\begin{equation*}
\dfrac{\partial \hat{\phi}_d}{\partial n} + \dfrac{\partial \hat{\phi}_r}{\partial n} = - \dfrac{\partial \hat{\phi}_0}{\partial n} + \hat{\bm{v}}_\ell \cdot \bm{n} \quad \text{on $\Gamma_{d,\ell}$} , \quad \ell=1, \dots, N_b.
\end{equation*}
Such condition can be satisfied by requiring that the diffraction and radiation potentials satisfy 
\begin{align*}
\dfrac{\partial \hat{\phi}_d}{\partial r} = - \dfrac{\partial \hat{\phi}_0}{\partial n} \quad \text{and} \quad
\dfrac{\partial \hat{\phi}_r}{\partial r} = \hat{\bm{v}}_\ell \cdot \bm{n} \quad \text{on $\Gamma_{d,\ell}$}.
\end{align*}
The radiation potential $\phi_r$ can then be decomposed as a sum of the effects of radiation from each body,
\begin{equation*}
\hat{\phi}_r = \sum_{\ell=1}^{N_b} \hat{v}_\ell \, \varphi_\ell,
\end{equation*}
so that each $\varphi_\ell$ satisfies a condition of unit velocity amplitude on the $\ell$-th body and an homogeneous condition on all the others. Diffraction and radiation potentials are also required to satisfy a radiation condition at infinity, which is needed for energy conservation. The full potential $\hat{\phi} = \hat{\phi}_0 + \hat{\phi}_d + \hat{\phi}_r$ is then computed by solving a single diffraction problem to obtain $\hat{\phi}_d$ and a radiation problem for each body to get $\varphi_\ell$, $\ell=1,\dots,N_b$:
\begin{equation}
\begin{split}
\begin{cases}
\Delta \hat{\phi}_d = 0 & \text{in $\Omega$}\\[.5em]
\dfrac{\partial \hat{\phi}_d}{\partial n}  =0 & \text{on $\Gamma_b$}\\[.5em]
\dfrac{\partial \hat{\phi}_d}{\partial z} - \dfrac{\omega^2}{g} \hat{\phi}_d = 0 & \text{on $\Gamma_s$}\\[.5em]
\dfrac{\partial \hat{\phi}_d}{\partial n} = - \dfrac{\partial \hat{\phi}_0}{\partial n} & \text{on $\Gamma_{d,\ell}$, $\ell = 1, \dots, N_b$}\\[.5em] \dfrac{\partial \hat{\phi}_d}{\partial r} - i k \hat{\phi}_d  \to 0 & \text{for $r \to \infty$},
\end{cases}
\\
\begin{cases}
\Delta \varphi_\ell = 0 & \text{in $\Omega$}\\[.5em]
\dfrac{\partial \varphi_\ell}{\partial n}  =0 & \text{on $\Gamma_b$}\\[.5em]
\dfrac{\partial \varphi_\ell}{\partial z} - \dfrac{\omega^2}{g} \varphi_\ell = 0 & \text{on $\Gamma_s$}\\[.5em]
\dfrac{\partial \varphi_\ell}{\partial n} = n_{z} & \text{on $\Gamma_{d,\ell}$} \\[.5em]
\dfrac{\partial \varphi_\ell}{\partial n} = 0 & \text{on $\Gamma_{d,m}, \quad m=1, \dots, N_b \land m\neq\ell$}\\[.5em] \dfrac{\partial {\varphi}_\ell}{\partial r} - i k \varphi_\ell  \to 0 & \text{for $r \to \infty$}.
\end{cases}
\end{split}
\label{eq:diffprob}
\end{equation}

Using the decomposition of the potential, equation \eqref{eq:forceint} can be rewritten as
\begin{equation}
\hat{F}_\ell = \underbrace{i \omega \rho \int_{\Gamma_{d,\ell}} (\hat{\phi}_0 + \hat{\phi}_d) n_z \, \mathrm{d \Gamma}}_{\hat{F}_{e,\ell}} + \sum_{m=1}^{N_b} \hat{v}_m \underbrace{\int_{\Gamma_{d,\ell}} \varphi_m n_z \, \mathrm{d \Gamma}}_{R_{\ell m}},
\label{eq:forcedecomp}
\end{equation}
where the first term is the excitation force $\hat{F}_{e,\ell}$, and the $m$-th term of the summation is the radiation impedance $R_{\ell m}$.
This is further split into real (in phase with velocity) and imaginary (out of phase) parts:
\begin{equation}
R_{\ell m} = B_{\ell m} + i \omega A_{\ell m},
\label{eq:raddecomp}
\end{equation}
where $A_{\ell m}$ is called added mass and $B_{\ell m}$ radiation damping. 

Summarising, using the above decomposition of the potential together with \eqref{eq:forcedecomp} and \eqref{eq:raddecomp}, the dynamical system \eqref{eq:dynsingledevice} can be recast as a complex linear system for each frequency:
\begin{equation*}
\sum_{m=1}^{N_b} Z_{\ell m}(\omega_q)  \hat{\zeta}_{m, q} = \hat{F}_{e, \ell}^\theta(\omega_q), \quad q=1, \dots, N_f, 
\end{equation*}
where the impedance matrix $Z(\omega_q)$ is defined as
\begin{equation}
Z_{\ell m}(\omega_q) = -\omega_q^2 (m_\ell \delta_{\ell m} + A_{\ell m}(\omega_q)) - i \omega_q (c_\ell \delta_{\ell m} + B_{\ell m}(\omega_q)) + (k_\ell + s_\ell) \delta_{\ell m},
\label{eq:impmatdef}
\end{equation}
and $\delta_{\ell m}$ is the Kronecker symbol. The dependence on frequency $\omega_q$ and on direction $\theta$ has been made explicit. Notice that only excitation forces depend on the wave direction. The only terms coupling the motions of different devices are the added mass and radiation damping coefficients.

\subsection{Modeling of constraints}\label{sec:constraints}

The model described in Section~\ref{sec:hydro} needs to be complemented with constraints to guarantee that the solutions are physical and that the control forces are feasible. A description of some possible constraints is reported in \cite{Backer2010}, namely slamming, stroke and force constraints. The slamming constraint guarantees that the bodies do not leave the water, and it is needed because the linear potential model is built on the assumption of small displacements. The stroke constraint limits the amplitude of body motions; it is related to the dimensions of the power take-off system. Finally, the force constraint ensures that the control force can be realized by an electromechanical system.

In this work, since we aim for generality, we focus on the slamming constraint, that is a purely hydrodynamic one. Conversely, force and stroke constraints depend strongly on the design details of a specific power take-off system. Once a system is specified, the other two constraints can be imposed with the same methodologies presented here.

The slamming constraint is a state constraint expressed in time domain as
\begin{equation}
w_\ell(t) = \zeta_\ell(t) - \eta_\ell(t) \leq d_\ell \quad \forall t, \quad \ell=1, \dots, N_b,
\label{eq:slamtimedom}
\end{equation} 
where $d_\ell$ is the draft of the $\ell$-th device and $\eta_\ell$ is the wave height at the center of the $\ell$-th device.
Let us now formulate this constraint in frequency domain. First, we consider the approximation introduced in \cite{Backer2010}, which consists in neglecting diffraction and radiation effects and taking the wave height from the incident wave whose complex amplitude is
\begin{equation*}
\hat{\eta}_0 = A  \exp[ik(x\cos\theta + y\sin\theta)].
\end{equation*}
For monochromatic waves, constraint \eqref{eq:slamtimedom} implies
\begin{equation}
\left|\hat{\zeta}_\ell - \hat{\eta}_{\ell 0}\right| \leq d_\ell.
\label{eq:1freqslam}
\end{equation} 
For irregular waves, it is not possible to obtain an explicit formula such as \eqref{eq:1freqslam}. The constraint can instead be imposed in a statistical sense by limiting the fraction of time above threshold $t_{at}$, meaning the fraction of time for which the constraint is violated, or the fraction of peaks above threshold $Q$, where constraint violations are considered as discrete events. It has been observed that for deep sea waves the free surface elevation $\eta(t)$ at a fixed point is well described as a Gaussian process \cite{Rychlik1997}. Moreover, if the input of a linear time invariant system is a Gaussian process, then its output is also a Gaussian process. We thus work under the approximation that $w(t)$ is a Gaussian process, which implies that the constraint can be enforced on the root mean square value $w_{rms}$ \cite{Lalanne1999}. Such quantity can be computed from frequency domain data:
\begin{equation}
w_{\ell, rms} = \sqrt{\dfrac{1}{2} \sum_{q=1}^{N_f} |w_{q\ell}|^2}.
\label{eq:defwrms}
\end{equation}
The constraint is then written for each body as 
\begin{equation}
w_{\ell, rms} \leq \alpha d_\ell,
\label{eq:slammingrmsconstraint}
\end{equation}
where $\alpha$ is a constant chosen to control the exceedance probability. In particular, the fraction of time above threshold is
\begin{equation*}
t_{at, \ell} = 2 \left[1 - \Phi\left( \dfrac{d_\ell}{w_{\ell, rms}} \right) \right],
\end{equation*}
where $\Phi(\cdot)$ is the normal cumulative distribution function. The fraction of peaks above threshold is, for a narrow banded process \cite{Lalanne1999}, 
\begin{equation}
Q_{\ell} = \exp \left( - \dfrac{d_\ell^2}{2 w_{\ell, rms}^2} \right).
\label{eq:peakfraction}
\end{equation}
The process we consider does not have a narrow band. However, it can be shown that, for a generic Gaussian process, \eqref{eq:peakfraction} is a cautionary estimate (upper bound) of the exceedance probability. As an example, setting
$\alpha=1/2$ and assuming that the constraint \eqref{eq:slammingrmsconstraint} is active, i.e. $w_{\ell, rms} = \alpha d_\ell$, yields $t_{at,\ell}=4.55\%$ and $Q_{p,\ell}=13.5\%$.

For convenience, we rewrite \eqref{eq:slammingrmsconstraint} using \eqref{eq:defwrms} as 
\begin{equation*}
g_\ell\left(\hat{\bm{\zeta}}\right) \leq 0, \quad g_\ell(\hat{\bm{\zeta}}) := \sum_q \left| \hat{\zeta}_{q\ell} - \hat{\eta}_{q\ell} \right|^2 - 2 \alpha^2 d_\ell^2, \quad \ell=1, \dots, N_b,
\end{equation*}
and we recast it as a differentiable equality constraint $h_\ell\left(\hat{\bm{\zeta}}\right)=0$, where
\begin{equation*} 
h_\ell\left(\hat{\bm{\zeta}}\right) := \left[g_\ell\left(\hat{\bm{\zeta}}\right)\right]_+^2, \quad [g]_+ := \max(g, 0).
\end{equation*}
The two constraints are equivalent in the sense that they have the same active set:
$h_\ell\left(\hat{\bm{\zeta}}\right) = 0 \Leftrightarrow g_\ell\left(\hat{\bm{\zeta}}\right) \leq 0$.


In addition to the slamming constraint, a constraint of positive control stiffness 
\begin{equation*}
s_\ell \geq 0, \quad \ell=1, \dots, N_b 
\end{equation*}
is enforced in some of the simulations to quantify the advantage of using negative stiffness technologies. 

\subsection{Forces stochasticity modeling}\label{sec:stochforce}
As noted in section \ref{sec:hydro}, the wave climate influences the system dynamics through the excitation force $\hat{F}_e$. Since our aim is to obtain control parameters that are robust with respect to the wave direction, we define a realization as a condition with irregular unidirectional waves, where the direction is sampled from a probability distribution. By irregular we mean that waves are not monochromatic, but rather a superposition of harmonics. Frequency and amplitude of each harmonic are the same for all realizations.

The wave climate is described by the directional spectrum $S(f, \theta)$, which is the distribution of wave power with respect to frequency $f$ and direction $\theta$. Its general form is \cite{Goda2010}
\begin{equation*}
S(f, \theta) = S(f) D(\theta | f),
\end{equation*}
with short (high-frequency) waves generally showing a larger directional spreading than long waves. In this work, we consider the simpler approximate form $S(f, \theta) = S(f) D(\theta)$. 
For the frequency spectrum we use the two-parameter Piersov-Moskovitz form \cite{Falnes2020}, whose expression is
\begin{equation}
    S(f) = \gamma_1 f^{-5} \exp\left(- \gamma_2 f^{-4}\right),
    \quad \text{ with } \quad \gamma_1 = \dfrac{\gamma_2 H_s^2}{4}, \quad \gamma_2 = \dfrac{5}{4} f_p^4,
    \label{eq:PMspectrum}
\end{equation}
and where $H_s$ is the significant wave height and $f_p$ is the peak frequency. The spectrum is discretized using the deterministic spectral amplitude method \cite{Chakrabarti1987} by first defining $N_f$ intervals of center $f_q$ and width $\Delta f_q$. Intervals are chosen so that they all correspond to the same power fraction (meaning they have in general different widths). The incident wave potential is then a superposition of potentials of the form \eqref{eq:incpotential}, where the $q$-th component has height $
H_q = 2 \sqrt{2 S(f_q) \Delta f_q}$. If necessary, the wave profile in time domain can be reconstructed as
\begin{equation*}
\eta(x, t) = \sum_{q=1}^{N_f} \dfrac{H_q}{2} \cos\left[k(x\cos\theta + y\sin\theta) - \omega_q t + \phi_i\right],
\end{equation*}
with $\phi_i \sim \mathcal{U}(0, 2\pi)$. In the limit of $N_f \to \infty$,  $\eta(x,t)$ is a Gaussian process \cite{Tucker1984}.

For the directional part, choosing a spectrum that has a closed-form inverse cumulative distribution makes sampling straightforward. A suitable choice is Donelan's spreading function \cite{Goda1999}:
\begin{equation}
D(\theta) = \dfrac{1}{2} \beta (\text{sech}[\beta (\theta-\theta_0)] )^2,
\label{eq:donelan}
\end{equation}
with the corresponding cumulative distribution function
\begin{equation*}
\tilde D(\theta) = \dfrac{1}{2} \left[\text{tanh}[\beta (\theta-\theta_0)] + 1 \right],
\end{equation*}
where $\theta_0$ is the dominating direction and $\beta$ is a scale parameter: for increasing $\beta$, the distribution is more sharply peaked (see Fig.~\ref{fig:donelan}). 
To sample a wave direction, we use the inverse transformation technique \cite[Sec. 9.2]{Haldar2000}: we first obtain a sample $\xi$ from a uniformly distributed variable $X \sim \mathcal{U}(0,1)$, and then apply the transformation 
\begin{equation*}
\theta = \tilde D^{-1}(\xi).
\end{equation*}

\begin{figure}[h!]
\centering
\subfloat{\includegraphics[width=0.45\textwidth]{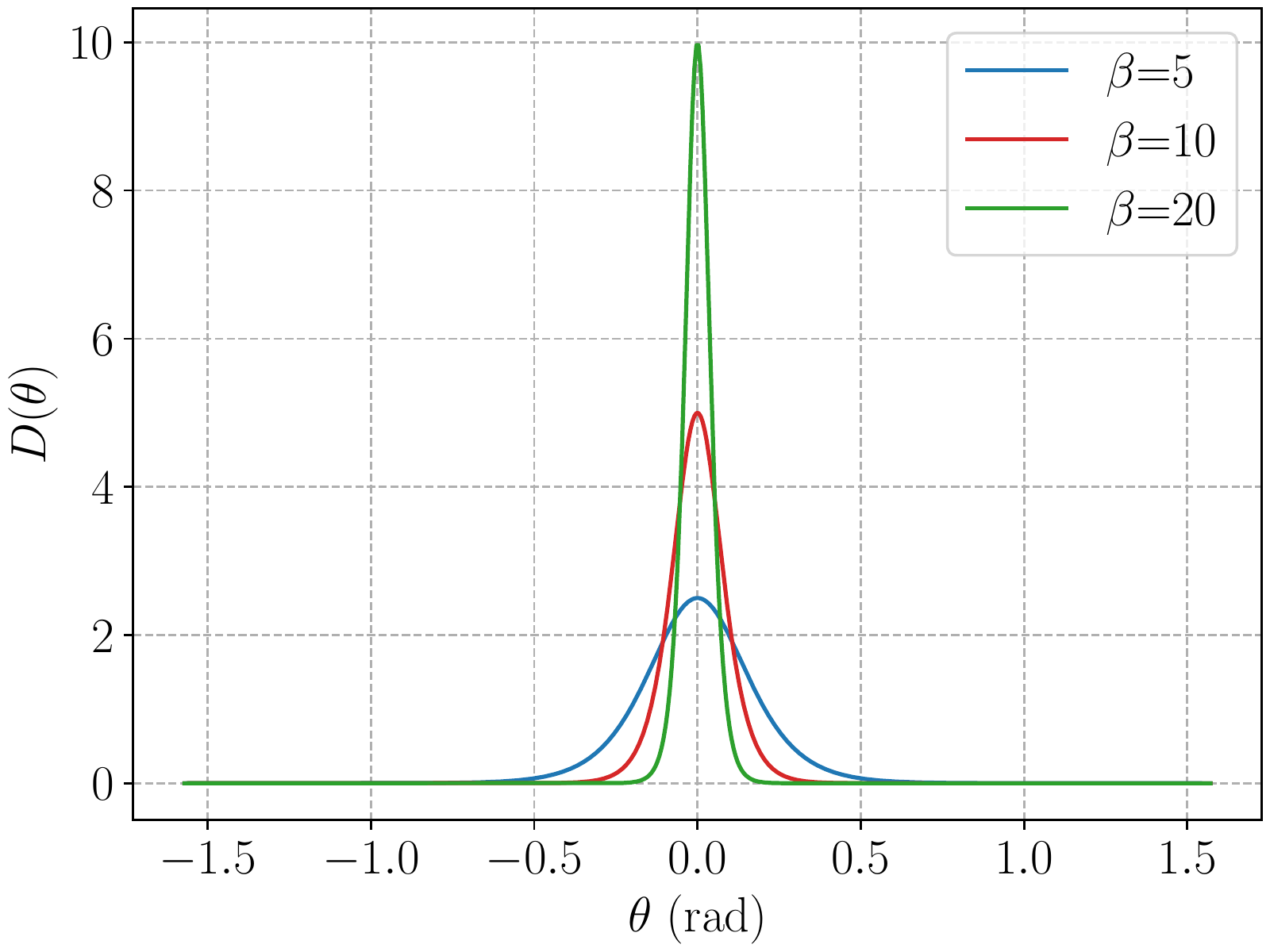}}
\hfill
\subfloat{\includegraphics[width=0.45\textwidth]{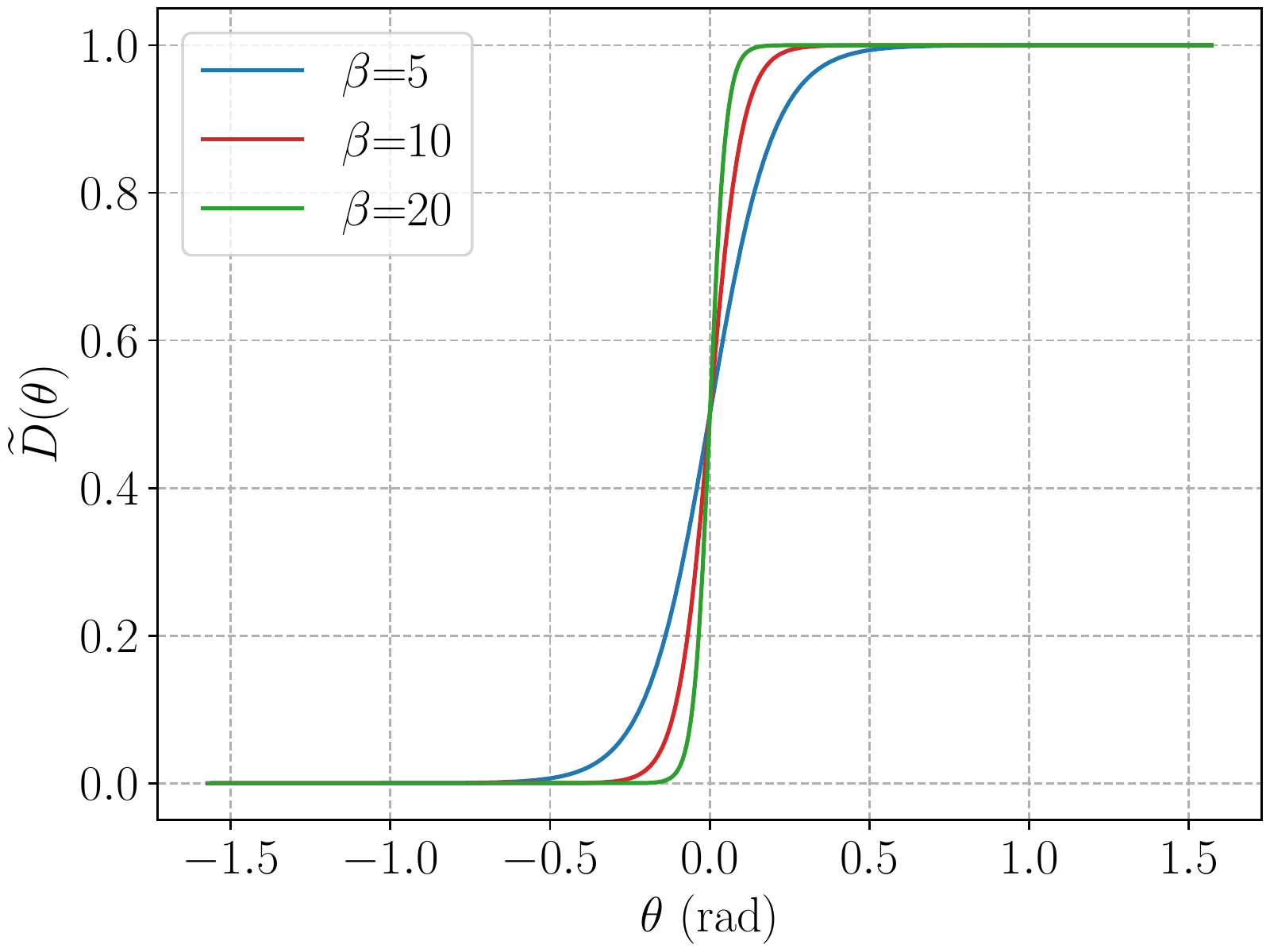}}
\caption{Probability density (left) and cumulative distribution (right) for Donelan's function}
\label{fig:donelan}
\end{figure}

\subsection{Computational aspects and Haskind's relation}

The hydrodynamic problem is solved with the boundary element method (BEM), based on boundary integral reformulations of \eqref{eq:diffprob}. 
In particular, the source distribution method is used \cite[Sec. 4.11]{Newman1977}: an unknown source distribution $\sigma$ is defined on the device surfaces $\Gamma_{d,\ell}, \, \ell=1, \dots, N_b$. The potential in any point $\bm{x}$ of the domain is obtained as the effect of such surface distribution through a convolution integral with a Green function $\mathcal{G}$, which satisfies the boundary conditions on $\partial \Omega \setminus \cup_\ell \Gamma_{d,\ell}$:
\begin{equation}
\phi(\bm{x}) = \int_{\cup_\ell \Gamma_{d,\ell}} \sigma(\bm{x}') \mathcal{G}(\bm{x}; \bm{x}') \, \mathrm{d} \bm{x}', \quad \forall \bm{x} \in \Omega.
\label{eq:reconstrphi}
\end{equation}
The boundary conditions on the body surface are then enforced:
\begin{equation}
\dfrac{1}{2} \sigma(\bm{x}) + \int_{\cup_\ell \Gamma_{d,\ell}} \sigma(\bm{x}') \dfrac{\partial \mathcal{G}}{\partial n} \, \mathrm{d} \bm{x}' = \text{b.c.}
\label{eq:findsigma}
\end{equation}
where the b.c. depends on the considered problem in \eqref{eq:diffprob}
The two equations \eqref{eq:reconstrphi}, \eqref{eq:findsigma} are transformed into linear algebraic equations by discretization:
\begin{align*}
K_1 \bm{\sigma} &= \bm{b}, \qquad
\bm{\phi} = K_2 \bm{\sigma}.
\end{align*}
Once the first system is solved, the potential is obtained from the second. 

We now discuss the computational effort required to construct a realization of the excitation force. In principle, one would need to draw a sample $\theta$ and then, for all frequencies, compute the corresponding incident wave potential using \eqref{eq:incpotential} and use it as a boundary condition for a diffraction problem. This requires storing or recomputing $N_f$ dense, and possibly large, matrices and solving each system once. This would make the process very computationally demanding. 
A much cheaper approach would be to run a batch of simulations for different values of the wave direction before the optimization process and then recover the force by interpolation for each realization. This strategy, however, could result in large inaccuracies because of the difficulty and ambiguity of interpolating complex functions \cite{Thornhill2019}. 
Another possibility is the use of Haskind's relation, an exact integral relation which allows computing the diffraction force for any wave direction as a combination of the potential of incident waves and radiation potentials \cite[Sec. 5.4]{Falnes2020}. The recomputation of the incident wave field is relatively cheap, while radiation potentials would need to be computed anyway to obtain the added mass and radiation damping matrices. For these reasons, this approach is intermediate in computational cost between full diffraction simulations and interpolation. The use of Haskind's relation is the choice that has been made here. Its expression is
\begin{equation*}
\hat{F}_{e,\ell} = - i \omega \rho \int_{\cup_\ell \Gamma_{d,\ell}} \left[\hat{\phi}_0\dfrac{\partial \varphi_\ell}{\partial n} - \varphi_\ell \dfrac{\partial \hat{\phi}_0}{\partial n} \right]\, \mathrm{d\Gamma}.
\end{equation*}
The integral is discretized as a sum on the mesh elements.

\section{Design of damping and stiffness by optimization}\label{sec:optmethods}

In this section, we formalize the optimization problem and present numerical methods for its solution.

\subsection{Problem statement}
The cost function is the average power with negative sign, which can be written as
\begin{equation*}
    J := - \sum_{\ell=1}^{N_b} \dfrac{1}{T} \int_0^T c_\ell \left( \dot{\zeta}_\ell(t) \right)^2 \, \mathrm{d}t.
\end{equation*}
The motion of each body is realized in the time domain as a sum of harmonics. Because of the orthogonality of such functions, the mean square of the signal depends only on the amplitudes of the harmonic components, phases being irrelevant. If incident waves are realized as the sum of $N_f$ monochromatic waves, as explained in Section \ref{sec:stochforce}, the cost is obtained by summing the contributions of all frequency components:
\begin{equation}
    J(\bm{u}, \bm{\hat{\zeta}}^\theta) = -\dfrac{1}{2} \sum_{q=1}^{N_f} \omega_q^2 \left( \hat{\bm{\zeta}}_q^\theta \right)^H C \hat{\bm{\zeta}}_q^\theta,
    \label{eq:Jdeffreqdom}
\end{equation}
where $\bm{u} \in \mathbb{R}^{2 N_b}$ is the control vector having the form
$\bm{u} = \left[ c_1, \dots, c_{N_b}, s_1, \dots, s_{N_b} \right]$,
$C$ is a diagonal matrix with $C_{\ell\ell} = c_\ell$, and we denote by the superscript $\theta$ all quantities depending on the wave angle. Here, $\hat{\bm{\zeta}}_q^\theta \in \mathbb{C}^{N_b}$ is the vector of complex amplitudes of all bodies at the $q$-th frequency, while $\hat{\bm{\zeta}}^\theta \in \mathbb{C}^{N_bN_f}$ is the collection of all $\hat{\bm{\zeta}}_q^\theta$ for $q=1, \dots, N_f$.
For a single wave angle, the problem of power maximization can then be stated as 

\begin{equation}
	\begin{split}
		\min_{\hat{\bm{\zeta}}^\theta, \bm{u} \in \mathcal{U}_{ad}} J(\bm{u}, \hat{\bm{\zeta}}^\theta) \quad 
		\text{s.t.} \quad
		\begin{cases}
			Z_q(\mathbf{u}) \hat{\bm{\zeta}}_q^\theta = \hat{\bm{F}}_q^\theta & q=1,\dots, N_f,\\
			h_\ell(\hat{\bm{\zeta}}^\theta) = 0 & \ell=1, \dots, N_b. \\
		\end{cases} 
	\end{split}
 \label{eq:optprob}
\end{equation}
The admissible set of controls $\mathcal{U}_{ad}$ is defined by
\begin{equation*}
    \mathcal{U}_{ad} = \left\{ \bm{u} \in \mathbb{R}^{2N_b}: c_\ell\geq \varepsilon >0, s_\ell \geq \gamma , \quad \ell=1,\dots,N_b\right\},
\end{equation*}
where $\varepsilon$ is a positive constant, and $\gamma=0$ if the positive stiffness constraint is applied, $\gamma=-\infty$ otherwise.
Next, we discuss sufficient conditions for $Z_q(\bm{u})$ to be invertible. 
\begin{lemma}\label{lemma:Zinvertible}
Let $Z \in \mathbb{C}^{N\times N}$ and call $Z_r = \Re(Z)$, $Z_i = \Im(Z)$. If $Z_r$ is symmetric and $Z_i$ is symmetric positive (or negative) definite, then $Z$ is invertible.
\end{lemma}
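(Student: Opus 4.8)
The plan is to prove injectivity of $Z$, i.e.\ that $Z\bm{x} = \bm{0}$ with $\bm{x}\in\mathbb{C}^N$ forces $\bm{x} = \bm{0}$; since $Z$ is square, injectivity is equivalent to invertibility. The single idea is to test the equation against $\bm{x}$, observe that the scalar $\bm{x}^H Z\bm{x}$ must then vanish, and split it into real and imaginary parts using the symmetry hypotheses.

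First I would record the elementary fact that for a real symmetric matrix $M\in\mathbb{R}^{N\times N}$ and any $\bm{x}\in\mathbb{C}^N$ the number $\bm{x}^H M\bm{x}$ is real. Writing $\bm{x} = \bm{a} + i\bm{b}$ with $\bm{a},\bm{b}\in\mathbb{R}^N$ gives
\begin{equation*}
\bm{x}^H M \bm{x} = \bm{a}^T M \bm{a} + \bm{b}^T M \bm{b} + i\left(\bm{a}^T M \bm{b} - \bm{b}^T M \bm{a}\right),
\end{equation*}
and the parenthesis vanishes because the scalar $\bm{a}^T M \bm{b}$ equals its own transpose $\bm{b}^T M^T \bm{a} = \bm{b}^T M \bm{a}$ by symmetry of $M$. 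Applying this to $M = Z_r$ and to $M = Z_i$ shows that $\bm{x}^H Z \bm{x} = \bm{x}^H Z_r \bm{x} + i\,\bm{x}^H Z_i \bm{x}$ is precisely the decomposition of $\bm{x}^H Z\bm{x}$ into its (real) real part $\bm{x}^H Z_r\bm{x}$ and its (real) imaginary part $\bm{x}^H Z_i\bm{x}$.

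Now suppose $Z\bm{x} = \bm{0}$. Then $\bm{x}^H Z\bm{x} = 0$, so both $\bm{x}^H Z_r\bm{x} = 0$ and $\bm{x}^H Z_i\bm{x} = 0$. If $Z_i$ is positive definite, the second equality forces $\bm{x} = \bm{0}$; the case of $Z_i$ negative definite is identical, e.g.\ by applying the argument to $-Z$, whose imaginary part is $-Z_i$ (and which is invertible iff $Z$ is). Hence $\ker Z = \{\bm{0}\}$ and $Z$ is invertible.

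I do not expect a genuine obstacle here: the only step requiring attention is the reality observation of the second paragraph, which is exactly where the symmetry of $Z_r$ and of $Z_i$ is used; once it is in place, definiteness of $Z_i$ closes the proof in one line. I would also note, for context, that in the application to \eqref{eq:impmatdef} the matrix $Z_i(\omega_q) = -\omega_q\,(c_\ell\delta_{\ell m} + B_{\ell m}(\omega_q))$ is negative definite whenever the radiation damping matrix $B(\omega_q)$ is symmetric positive semidefinite and the control dampings satisfy $c_\ell > 0$, which is the regime of interest, so the lemma indeed applies to the impedance matrix used throughout.
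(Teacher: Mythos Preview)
Your argument is correct. The key reality observation is properly justified, and the conclusion that $\bm{x}^H Z_i\bm{x}=0$ forces $\bm{x}=\bm{0}$ follows because, in your notation, $\bm{x}^H Z_i\bm{x}=\bm{a}^T Z_i\bm{a}+\bm{b}^T Z_i\bm{b}$ is a sum of two nonnegative terms that vanish only when $\bm{a}=\bm{b}=\bm{0}$.

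Your route is genuinely different from the paper's. The paper does not argue via the kernel and quadratic forms; instead it writes the complex system $(Z_r+iZ_i)(\bm{x}_r+i\bm{x}_i)=\bm{f}_r+i\bm{f}_i$ as a $2\times 2$ block real system, eliminates one block, and shows that the resulting Schur complement $\widetilde{Z}=Z_i+Z_rZ_i^{-1}Z_r$ is positive definite (using symmetry of $Z_r$ and definiteness of $Z_i$ and hence of $Z_i^{-1}$). This yields explicit formulas for $\bm{x}_r$ and $\bm{x}_i$, so it is constructive and gives, in principle, an expression for $Z^{-1}$. Your numerical-range argument is shorter and more elementary, needs no matrix inversions along the way, and makes transparent that only the symmetry of $Z_r$ (not any spectral information about it) is used; the paper's Schur-complement computation encodes the same fact but less directly. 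Either approach suffices for the lemma as stated; the paper's version would be preferable only if the explicit solution formulas were needed elsewhere, which they are not.
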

\begin{proof}
Consider a generic system
$(Z_r + i Z_i)(\mathbf{x}_r + i \mathbf{x}_i) = \mathbf{f}_r + i \mathbf{f}_i$.
Splitting into real and imaginary parts and solving formally for $\mathbf{x}_r$ and $\mathbf{x}_i$ yields
\begin{equation}
\mathbf{x}_r = \widetilde{Z}^{-1} (Z_rZ_i^{-1} \mathbf{f}_r + \mathbf{f}_i), 
\quad 
\mathbf{x}_i = (I + Z_i^{-1}Z_r\widetilde{Z}^{-1}Z_r)Z^{-1} \mathbf{f}_r + Z_i^{-1}Z_r\widetilde{Z}^{-1} \mathbf{f}_i,
\label{eq:invcomplex}
\end{equation}
where $\widetilde{Z}$ is the Schur complement $\widetilde{Z} = Z_i + Z_r Z_i^{-1} Z_r$.
Suppose $Z_i$ is positive definite, so that its inverse is also positive definite (an analogous argument applies if $Z_i$ is negative definite). $Z_r$ is real and symmetric, thus $Z_r^H = Z_r$. Then $\widetilde{Z}$ is positive definite:
\begin{equation*}
\begin{split}
\mathbf{v}^H (Z_i + Z_r Z_i^{-1} Z_r) \mathbf{v}  &= \mathbf{v}^H Z_i \mathbf{v} + \left(\mathbf{v}^H Z_r^H \right) Z_i^{-1} \left(Z_r \mathbf{v}\right) 
= \mathbf{v}^H Z_i \mathbf{v} +  \left( Z_r \mathbf{v} \right)^H Z_i^{-1} \left( Z_r \mathbf{v} \right) 
> 0, \quad \forall\mathbf{v}\neq 0.
\end{split}
\end{equation*}
Since $\widetilde{Z}$ is invertible, the expressions in \eqref{eq:invcomplex} are well defined for any $\mathbf{f}$ and thus $Z$ is invertible.
\end{proof}

Consider now the impedance matrix defined in \eqref{eq:impmatdef}. Matrices $A$ and $B$ are symmetric, and because of energy conservation $B$ is also positive semidefinite \cite[Sec. 5.2, 6.5]{Falnes2020}. The other involved matrices are diagonal and thus symmetric. 
Then, the real part of the impedance matrix
    $Z_r = -\omega^2(M + A) + K + S$
is symmetric. Regarding the imaginary part
    $Z_i = -\omega(B + C)$, 
we have that $B$ is symmetric positive semidefinite. If we further assume that all bodies have positive generator damping, implying that the mean extracted power is positive, then $C$ is diagonal with positive values. Thus $Z_i$ is symmetric positive definite and $Z_r$ is symmetric, satisfying the hypotheses of Lemma~\ref{lemma:Zinvertible}.

Since the matrix $Z_q(\mathbf{u})$ is invertible for any admissible control vector $\mathbf{u}$, we can formally eliminate the constraint $Z_q(\mathbf{u})\hat{\bm{\zeta}}^\theta = \hat{\mathbf{F}}^\theta$ and rewrite \eqref{eq:optprob} in the reduced form
\begin{equation}
		\min_{\bm{u}\in \mathcal{U}_{ad}} \widetilde{J}(\bm{u}; \theta) := J(\bm{u}, \hat{\bm{\zeta}}^\theta(\bm{u})) \quad
		\text{s.t.}  \quad
        h_\ell(\hat{\bm{\zeta}}^\theta(\bm{u})) = 0 \quad \ell=1, \dots, N_b.
 \label{eq:redoptprob}
\end{equation}

The following properties hold for the case of a single wave direction $\theta$. 
\begin{lemma}\label{lemma:negativeJ}
    If there exists an index $\overline{q} \in 1, \dots, N_f$ such that $\hat{\bm{F}}^{\theta}_{\overline{q}} \neq 0$, then $\widetilde{J}(\bm{u}) < 0 \quad \forall \bm{u} \in \mathcal{U}_{ad}$.
    Otherwise, $\widetilde{J}(\bm{u}) = 0$ $\forall \bm{u} \in \mathcal{U}_{ad}$.
\end{lemma}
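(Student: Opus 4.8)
The plan is to analyze the sign of $\widetilde{J}(\bm{u})$ directly from its definition \eqref{eq:Jdeffreqdom}, namely $\widetilde{J}(\bm{u}) = -\frac{1}{2}\sum_{q=1}^{N_f} \omega_q^2 (\hat{\bm{\zeta}}_q^\theta)^H C \hat{\bm{\zeta}}_q^\theta$, where each $\hat{\bm{\zeta}}_q^\theta = Z_q(\bm{u})^{-1}\hat{\bm{F}}_q^\theta$ is well defined thanks to the invertibility established via Lemma~\ref{lemma:Zinvertible}. Since $C$ is diagonal with entries $c_\ell \geq \varepsilon > 0$ for every admissible control, $C$ is symmetric positive definite, so each quadratic form $(\hat{\bm{\zeta}}_q^\theta)^H C \hat{\bm{\zeta}}_q^\theta$ is real and nonnegative, and strictly positive whenever $\hat{\bm{\zeta}}_q^\theta \neq 0$. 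Consequently $\widetilde{J}(\bm{u}) \leq 0$ always, and the two cases reduce to determining exactly when all the $\hat{\bm{\zeta}}_q^\theta$ vanish.

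The key step is therefore the equivalence $\hat{\bm{\zeta}}_q^\theta = 0 \iff \hat{\bm{F}}_q^\theta = 0$. One direction is immediate: if $\hat{\bm{F}}_q^\theta = 0$, then $\hat{\bm{\zeta}}_q^\theta = Z_q(\bm{u})^{-1} \cdot 0 = 0$. For the converse, if $\hat{\bm{\zeta}}_q^\theta = 0$ then $\hat{\bm{F}}_q^\theta = Z_q(\bm{u})\hat{\bm{\zeta}}_q^\theta = 0$. Hence, if there exists $\overline{q}$ with $\hat{\bm{F}}_{\overline{q}}^\theta \neq 0$, then $\hat{\bm{\zeta}}_{\overline{q}}^\theta \neq 0$, so the $\overline{q}$-th term in the sum defining $\widetilde{J}$ is strictly positive (using $\omega_{\overline{q}} > 0$), while all other terms are nonnegative; therefore $\widetilde{J}(\bm{u}) < 0$. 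If instead $\hat{\bm{F}}_q^\theta = 0$ for all $q$, then every $\hat{\bm{\zeta}}_q^\theta = 0$ and $\widetilde{J}(\bm{u}) = 0$. Since this holds for arbitrary $\bm{u} \in \mathcal{U}_{ad}$, the claim follows.

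I do not expect any serious obstacle here: the argument is essentially a positive-definiteness observation combined with the invertibility of $Z_q(\bm{u})$. The only points requiring a little care are (i) noting explicitly that $C \succ 0$ on $\mathcal{U}_{ad}$ because of the strict bound $c_\ell \geq \varepsilon > 0$ (this is why the admissible set was defined with $\varepsilon$ rather than just $c_\ell \geq 0$), and (ii) recording that $\omega_q > 0$ for all $q$ so that the prefactors $\omega_q^2$ do not kill the strictly positive term. Both are built into the modeling assumptions, so the proof is short.
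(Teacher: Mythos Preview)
Your proposal is correct and follows essentially the same route as the paper: invoke the invertibility of $Z_q(\bm{u})$ from Lemma~\ref{lemma:Zinvertible} to get $\hat{\bm{\zeta}}_{\overline{q}}^\theta \neq 0$, use $c_\ell \geq \varepsilon > 0$ to make $C$ positive definite, and bound $\widetilde{J}$ by the single strictly negative $\overline{q}$-term. Your write-up is in fact slightly more complete than the paper's, since you spell out the ``otherwise'' case explicitly.
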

\begin{proof}
    Lemma \ref{lemma:Zinvertible} ensures that $Z_{\overline{q}}(\bm{u})$ is invertible for all $\bm{u}\in \mathcal{U}_{ad}$. Then, $\hat{\bm{F}}^{\theta}_{\overline{q}} \neq 0$ implies $\hat{\bm{\zeta}}^{\theta}_{\overline{q}} \neq 0$. From this and using again the assumption that $\bm{u} \in \mathcal{U}_{ad}$, so that in particular $c_l>0$, it follows that $\left(\hat{\bm{\zeta}}^{\theta}_{\overline{q}}\right)^H C \hat{\bm{\zeta}}^{\theta}_{\overline{q}}>0$. From the definition of $J$ \eqref{eq:Jdeffreqdom} we have
    $\widetilde{J}(\bm{u}) \leq - \dfrac{1}{2} \omega_{\overline{q}}^2 \left( \hat{\bm{\zeta}}_q^\theta \right)^H C \hat{\bm{\zeta}}^{\theta}_{\overline{q}} < 0$.
\end{proof}

\begin{lemma}\label{lemma:asympt}
    The following asymptotics hold for $\|\bm{u}\|_2 \to \infty$ and $\forall q$, $\forall \theta$:
    $$ \|Z_q(\bm{u})\|_2 = \mathcal{O}\left( \|\bm{u}\|_2 \right), \quad \| \hat{\bm{\zeta}}^{\theta}_q \|_2 = \mathcal{O} \left( \dfrac{1}{\| \bm{u} \|_2} \right). $$
    In particular, 
    $\lim_{\|\bm{u}\|_2 \to \infty} \widetilde{J}(\bm{u}) = 0$.
\end{lemma}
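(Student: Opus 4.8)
The plan is to isolate the dependence of the impedance matrix on the control. Using \eqref{eq:impmatdef}, write $Z_q(\bm{u}) = Z_q^{(0)} + D_q(\bm{u})$, where $Z_q^{(0)} := -\omega_q^2\bigl(M + A(\omega_q)\bigr) + K - i\omega_q B(\omega_q)$ is independent of $\bm{u}$ while $D_q(\bm{u}) := \mathrm{diag}\bigl(s_1 - i\omega_q c_1, \dots, s_{N_b} - i\omega_q c_{N_b}\bigr)$ is diagonal and depends linearly on $\bm{u}$. Since $c_\ell \geq \varepsilon > 0$ on $\mathcal{U}_{ad}$ and $\omega_q > 0$, no diagonal entry of $D_q(\bm{u})$ vanishes, hence $D_q(\bm{u})$ is invertible. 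With this splitting the first asymptotic is a triangle inequality, the second follows from a Neumann-series perturbation argument built around $D_q(\bm{u})$, and the third is a consequence of the second together with the elementary bound $\max_\ell c_\ell \leq \|\bm{u}\|_2$.

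For $\|Z_q(\bm{u})\|_2$, I would estimate
\begin{equation*}
\|Z_q(\bm{u})\|_2 \leq \|Z_q^{(0)}\|_2 + \|D_q(\bm{u})\|_2 = \|Z_q^{(0)}\|_2 + \max_{\ell}\sqrt{s_\ell^2 + \omega_q^2 c_\ell^2} \leq \|Z_q^{(0)}\|_2 + \max\{1,\omega_q\}\,\|\bm{u}\|_2,
\end{equation*}
using that the spectral norm of a diagonal matrix is the largest modulus of its entries; this gives $\|Z_q(\bm{u})\|_2 = \mathcal{O}(\|\bm{u}\|_2)$.

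For $\|\hat{\bm{\zeta}}_q^\theta\|_2$, I would factor $Z_q(\bm{u}) = \bigl(I + Z_q^{(0)} D_q(\bm{u})^{-1}\bigr) D_q(\bm{u})$, so that as soon as $\|Z_q^{(0)}\|_2\,\|D_q(\bm{u})^{-1}\|_2 < 1$ the Neumann series gives $\|Z_q(\bm{u})^{-1}\|_2 \leq \|D_q(\bm{u})^{-1}\|_2 \bigl(1 - \|Z_q^{(0)}\|_2\,\|D_q(\bm{u})^{-1}\|_2\bigr)^{-1}$. Since $\|D_q(\bm{u})^{-1}\|_2 = 1/\sigma_{\min}(D_q(\bm{u})) = \bigl(\min_\ell \sqrt{s_\ell^2 + \omega_q^2 c_\ell^2}\bigr)^{-1}$, and since $\hat{\bm{\zeta}}_q^\theta = Z_q(\bm{u})^{-1}\hat{\bm{F}}_q^\theta$ with $\hat{\bm{F}}_q^\theta$ independent of $\bm{u}$, the claim $\|\hat{\bm{\zeta}}_q^\theta\|_2 \leq \|Z_q(\bm{u})^{-1}\|_2\,\|\hat{\bm{F}}_q^\theta\|_2 = \mathcal{O}(1/\|\bm{u}\|_2)$ reduces to the estimate $\sigma_{\min}(D_q(\bm{u})) = \Omega(\|\bm{u}\|_2)$ as $\|\bm{u}\|_2 \to \infty$ within $\mathcal{U}_{ad}$. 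This is the step I expect to be the main obstacle: in contrast with $\|D_q(\bm{u})\|_2$, the smallest singular value of the diagonal matrix $D_q(\bm{u})$ is controlled by the \emph{smallest} of the pairs $(c_\ell, s_\ell)$ rather than by $\|\bm{u}\|_2$, so the argument must exploit the uniform lower bound $c_\ell \geq \varepsilon$ and pay attention to the mode in which $\bm{u}$ diverges; along a fixed ray of $\mathcal{U}_{ad}$, where every component of $\bm{u}$ scales with $\|\bm{u}\|_2$, the estimate $\sigma_{\min}(D_q(\bm{u})) \geq \omega_q\,\min_\ell c_\ell = \Omega(\|\bm{u}\|_2)$ is immediate.

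Finally, using \eqref{eq:Jdeffreqdom} and $C_{\ell\ell} = c_\ell \leq \|\bm{u}\|_2$,
\begin{equation*}
|\widetilde{J}(\bm{u})| \leq \frac{1}{2}\sum_{q=1}^{N_f}\omega_q^2\,\bigl(\max_\ell c_\ell\bigr)\,\|\hat{\bm{\zeta}}_q^\theta\|_2^2 \leq \frac{1}{2}\Bigl(\max_q \omega_q^2\Bigr)\,\|\bm{u}\|_2 \sum_{q=1}^{N_f}\|\hat{\bm{\zeta}}_q^\theta\|_2^2 = \|\bm{u}\|_2 \cdot \mathcal{O}\!\left(\frac{1}{\|\bm{u}\|_2^2}\right),
\end{equation*}
so $\widetilde{J}(\bm{u}) = \mathcal{O}(1/\|\bm{u}\|_2) \to 0$; this last step is purely mechanical once the bound on $\|\hat{\bm{\zeta}}_q^\theta\|_2$ is in hand.
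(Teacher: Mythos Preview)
Your first bound on $\|Z_q(\bm{u})\|_2$ is correct and equivalent to the paper's Frobenius-norm argument. Your concern about the second asymptotic, however, is not a technicality to be patched but a genuine defect in the statement itself: the claim $\|\hat{\bm{\zeta}}_q^\theta\|_2 = \mathcal{O}(1/\|\bm{u}\|_2)$ is false for sequences in $\mathcal{U}_{ad}$ that diverge along a single coordinate. Take $N_b \geq 2$ and let $c_1 = n \to \infty$ while the remaining $c_\ell = \varepsilon$ and all $s_\ell = 0$ stay fixed; then $\|\bm{u}\|_2 \sim n$, yet solving $Z_q \hat{\bm{\zeta}} = \hat{\bm{F}}_q^\theta$ gives $\hat{\zeta}_{1,q} = \mathcal{O}(1/n)$ while each $\hat{\zeta}_{\ell,q}$ with $\ell \geq 2$ converges to a generically nonzero limit (body~1 becomes effectively rigid, the others see a modified but finite impedance). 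Hence $\|\hat{\bm{\zeta}}_q^\theta\|_2 \not\to 0$, and moreover $\widetilde{J}(\bm{u}) \to -\tfrac{1}{2}\sum_q \omega_q^2 \sum_{\ell \geq 2} \varepsilon\, |\hat{\zeta}_{\ell,q}^{\mathrm{lim}}|^2 < 0$, so the conclusion $\widetilde{J} \to 0$ fails as well. Your Neumann-series reduction to the requirement $\sigma_{\min}(D_q(\bm{u})) = \Omega(\|\bm{u}\|_2)$ pinpoints exactly where the obstruction lies, and that lower bound simply does not hold on all of $\mathcal{U}_{ad}$.

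The paper's own argument does not avoid this gap: it passes from $\|\hat{\bm{\zeta}}_q^\theta\|_2 \leq \|Z_q(\bm{u})^{-1}\|_2\,\|\hat{\bm{F}}_q^\theta\|_2$ to $\|\hat{\bm{\zeta}}_q^\theta\|_2 = \mathcal{O}(1/\|\bm{u}\|_2)$ by tacitly treating growth of $\|Z_q(\bm{u})\|_2$ as decay of $\|Z_q(\bm{u})^{-1}\|_2$, which would require $\sigma_{\min}(Z_q(\bm{u})) = \Omega(\|\bm{u}\|_2)$ --- precisely the estimate you correctly flagged as unavailable. What the downstream existence results (Lemma~\ref{lemma:ball} and Theorem~\ref{theo:existunconstr}) actually need can be recovered by other means, for instance via the physical bound that $-\widetilde{J}$ cannot exceed the incident-wave power, combined with a more careful compactness argument; but the asymptotic as stated is too strong to be true.
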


\begin{proof}
    From the definition \eqref{eq:impmatdef} of the impedance matrix, one directly finds $\|Z_q(\bm{u})\|_F = \mathcal{O}(\| \bm{u} \|_2)$ for $\|\bm{u}\| \to \infty$. The equivalence between the Frobenius norm and the 2-norm implies $\|Z_q(\bm{u})\|_2 = \mathcal{O}(\| \bm{u} \|_2)$.

    For the second statement, consider
    $$ \|\hat{\bm{\zeta}}^{\theta}_q (\bm{u}) \|_2 = \|Z_q(\bm{u})^{-1} \bm{F}_q \|_2 \leq \|Z_q(\bm{u})^{-1}\|_2 \|\bm{F}^{\theta}_q\|_2. $$
    Since $\bm{F}_q$ does not depend on $\bm{u}$, we have
    $ \|\hat{\bm{\zeta}}^{\theta}_q (\bm{u}) \|_2 = \mathcal{O} \left( \dfrac{1}{\|Z_q(\bm{u})^{-1}\|_2} \right) = \mathcal{O} \left( \dfrac{1}{\| \bm{u} \|_2} \right)$.
\end{proof}

\begin{lemma} \label{lemma:ball}
    There exists a constant $\widetilde{C} > 0$ such that
    $$ \min_{\bm{u} \in \mathcal{U}_{ad}} \widetilde{J}(\bm{u}) \Leftrightarrow \min_{\bm{u} \in \mathcal{U}_{ad}\cap \overline{\mathcal{B}_0(\widetilde{C})}} \widetilde{J}(\bm{u}),$$
    where $\mathcal{B}_0(\widetilde{C})$ is a ball of radius $\widetilde{C}$ centered in the origin.
\end{lemma}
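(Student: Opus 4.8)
The plan is to read off the result from the two preceding lemmas: Lemma~\ref{lemma:negativeJ} gives that $\widetilde{J}$ is strictly negative everywhere on $\mathcal{U}_{ad}$ (apart from the degenerate all-forces-zero case), while Lemma~\ref{lemma:asympt} gives $\widetilde{J}(\bm{u}) \to 0$ as $\|\bm{u}\|_2 \to \infty$. Together these two facts force every minimizing sequence to remain in a bounded region, and truncating $\mathcal{U}_{ad}$ to a large enough ball changes neither the optimal value nor the set of minimizers.

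First I would dispatch the degenerate case: if $\hat{\bm{F}}^\theta_q = 0$ for all $q$, then by Lemma~\ref{lemma:negativeJ} $\widetilde{J} \equiv 0$ on $\mathcal{U}_{ad}$, and any $\widetilde{C}$ large enough that $\mathcal{U}_{ad} \cap \overline{\mathcal{B}_0(\widetilde{C})} \neq \emptyset$ does the job. Otherwise, I fix a reference control $\bm{u}_0 \in \mathcal{U}_{ad}$ (e.g. $c_\ell = \varepsilon$, $s_\ell = \max(\gamma,0)$), so that Lemma~\ref{lemma:negativeJ} yields $\widetilde{J}(\bm{u}_0) < 0$, and I set $\delta := -\tfrac{1}{2}\widetilde{J}(\bm{u}_0) > 0$. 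By the limit in Lemma~\ref{lemma:asympt} there is $R > 0$ with $\widetilde{J}(\bm{u}) > -\delta$ for every $\bm{u} \in \mathcal{U}_{ad}$ with $\|\bm{u}\|_2 > R$; I then take $\widetilde{C} := \max\{R,\ \|\bm{u}_0\|_2\}$, which guarantees in particular that $\bm{u}_0$ itself lies in $\mathcal{U}_{ad} \cap \overline{\mathcal{B}_0(\widetilde{C})}$.

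With this choice, any $\bm{u} \in \mathcal{U}_{ad}$ with $\|\bm{u}\|_2 > \widetilde{C}$ satisfies $\widetilde{J}(\bm{u}) > -\delta = \tfrac{1}{2}\widetilde{J}(\bm{u}_0) > \widetilde{J}(\bm{u}_0)$ (the last inequality because $\widetilde{J}(\bm{u}_0) < 0$), so it is beaten by $\bm{u}_0$ and cannot be optimal. Hence $\inf_{\mathcal{U}_{ad}} \widetilde{J} = \inf_{\mathcal{U}_{ad} \cap \overline{\mathcal{B}_0(\widetilde{C})}} \widetilde{J}$: the right-hand infimum is no smaller because it is taken over a subset, and no strictly larger because every competitor outside the ball is dominated by the interior point $\bm{u}_0$. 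Thus the two minimization problems share the same value and the same minimizers. Finally I would note that $\mathcal{U}_{ad}$ is closed and $\overline{\mathcal{B}_0(\widetilde{C})}$ is compact, so the truncated feasible set is compact; combined with continuity of $\widetilde{J}$ on $\mathcal{U}_{ad}$ — which follows from Lemma~\ref{lemma:Zinvertible}, since $\bm{u} \mapsto Z(\bm{u})$ is affine, matrix inversion is continuous on invertible matrices, $\hat{\bm{\zeta}}^\theta(\bm{u}) = Z(\bm{u})^{-1}\hat{\bm{F}}^\theta$, and $J$ is continuous in $(\bm{u}, \hat{\bm{\zeta}}^\theta)$ — this also delivers attainment of the minimum, which is presumably the point of the lemma.

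I do not expect a genuine obstacle here: all the estimates are elementary and the substantive content is already packaged in Lemmas~\ref{lemma:negativeJ} and \ref{lemma:asympt}. The only things needing care are the bookkeeping of the equivalence (matching optimal \emph{values and optimal sets}, not merely asserting the existence of some bounded ball) and ensuring the anchor point $\bm{u}_0$ is itself inside the chosen ball, both handled above. The same truncation argument applies verbatim to the $h$-constrained reduced problem \eqref{eq:redoptprob} once one observes, via Lemma~\ref{lemma:asympt}, that $\hat{\bm{\zeta}}^\theta(\bm{u}) \to 0$ as $\|\bm{u}\|_2 \to \infty$, so that the slamming constraint is satisfied by controls with sufficiently large damping and the constrained feasible set is non-empty.
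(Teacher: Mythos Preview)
Your proposal is correct and follows essentially the same route as the paper: pick a point where $\widetilde{J}$ is strictly negative (via Lemma~\ref{lemma:negativeJ}), use Lemma~\ref{lemma:asympt} to find a radius beyond which $\widetilde{J}$ exceeds that value, and conclude that the minimization can be restricted to the closed ball. Your version is more careful than the paper's --- you explicitly handle the degenerate all-zero-forces case, ensure the anchor point $\bm{u}_0$ lies inside the chosen ball, and spell out both directions of the equivalence --- but the underlying argument is the same.
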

\begin{proof}
    Lemmas \ref{lemma:negativeJ} and \ref{lemma:asympt} imply that
    $$ \forall M<0 \quad \exists \widetilde{C}>0 \quad \text{s.t.} \quad \widetilde{J}(\bm{u}) > M \; \forall \bm{u} \in \mathcal{U}_{ad} \cap \overline{\mathcal{B}_0\left(\widetilde{C}\right)}^{\mathcal{C}}.$$
    Consider a control vector $\widetilde{\bm{u}}$ such that $\widetilde{J}(\widetilde{\bm{u}}) = M <0.$ Then $\widetilde{J}(\bm{u})>\widetilde{J}(\widetilde{\bm{u}})$ $\forall \bm{u}\in \mathcal{U}_{ad} \cap \overline{\mathcal{B}_0\left(\widetilde{C}\right)}^\mathcal{C}$. In particular, if $\widetilde{J}$ has a minimum over such set, then the latter is also a minimum over $\mathcal{U}_{ad}$.
    
\end{proof}

\begin{theo}\label{theo:existunconstr}
    Under the assumptions of Lemma \ref{lemma:negativeJ}, Problem 
    $$
    \min_{\bm{u}\in \mathcal{U}_{ad}} \widetilde{J}(\bm{u}; \theta) 
    $$
    admits a solution for any $\theta$.
\end{theo}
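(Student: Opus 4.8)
The plan is to reduce the problem to minimizing a continuous function over a compact set and then invoke the Weierstrass extreme value theorem. By Lemma~\ref{lemma:ball}, minimizing $\widetilde{J}$ over $\mathcal{U}_{ad}$ is equivalent to minimizing it over $\mathcal{U}_{ad} \cap \overline{\mathcal{B}_0(\widetilde{C})}$, so it suffices to establish existence of a minimizer on this restricted set. The two ingredients I would check are (i) that $\mathcal{U}_{ad} \cap \overline{\mathcal{B}_0(\widetilde{C})}$ is nonempty and compact, and (ii) that $\widetilde{J}(\cdot;\theta)$ is continuous on it. Weierstrass then gives the conclusion.

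For (i): the set $\mathcal{U}_{ad}$ is closed, being defined by the non-strict inequalities $c_\ell \geq \varepsilon$ and $s_\ell \geq \gamma$ (when $\gamma = -\infty$ the stiffness components are simply unconstrained, which does not affect closedness). Intersecting with the closed ball $\overline{\mathcal{B}_0(\widetilde{C})}$ yields a closed and bounded, hence compact, subset of $\mathbb{R}^{2N_b}$. Nonemptiness requires $\widetilde{C}$ to be large enough to contain at least one admissible control; this is guaranteed because in the proof of Lemma~\ref{lemma:ball} the radius $\widetilde{C}$ is chosen so that a specific admissible $\widetilde{\bm{u}}$ with $\widetilde{J}(\widetilde{\bm{u}}) = M < 0$ lies inside $\overline{\mathcal{B}_0(\widetilde{C})}$ (such a $\widetilde{\bm{u}}$ exists by Lemma~\ref{lemma:negativeJ}, since $\widetilde{J}$ is strictly negative on the nonempty set $\mathcal{U}_{ad}$ under the standing assumption $\hat{\bm{F}}^\theta_{\overline q} \neq 0$). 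One may also enlarge $\widetilde{C}$ if needed without loss.

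For (ii): continuity of $\widetilde{J}(\bm{u};\theta) = J(\bm{u}, \hat{\bm{\zeta}}^\theta(\bm{u}))$ follows by composition. The entries of $Z_q(\bm{u})$ depend polynomially (indeed affinely) on $\bm{u}$ by \eqref{eq:impmatdef}; Lemma~\ref{lemma:Zinvertible} guarantees $Z_q(\bm{u})$ is invertible for every $\bm{u} \in \mathcal{U}_{ad}$, and matrix inversion is continuous on the open set of invertible matrices, so $\bm{u} \mapsto \hat{\bm{\zeta}}^\theta_q(\bm{u}) = Z_q(\bm{u})^{-1}\hat{\bm{F}}^\theta_q$ is continuous on $\mathcal{U}_{ad}$ for each $q$. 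Finally $J$ is a quadratic form in $\hat{\bm{\zeta}}^\theta$ with coefficients depending continuously (linearly) on $\bm{u}$ through $C$, hence $\widetilde{J}$ is continuous on $\mathcal{U}_{ad} \cap \overline{\mathcal{B}_0(\widetilde{C})}$.

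I do not anticipate a serious obstacle here: the result is essentially a packaging of the three preceding lemmas. The one point that needs a little care is ensuring the compact set on which Weierstrass is applied is genuinely nonempty and actually contains a near-optimal point — i.e., that the equivalence in Lemma~\ref{lemma:ball} is used correctly, transferring a minimizer on $\mathcal{U}_{ad} \cap \overline{\mathcal{B}_0(\widetilde{C})}$ back to a minimizer on all of $\mathcal{U}_{ad}$. The degenerate case (all $\hat{\bm{F}}^\theta_q = 0$) is excluded by the hypothesis inherited from Lemma~\ref{lemma:negativeJ}, so strict negativity of the infimum is available and the argument goes through cleanly.
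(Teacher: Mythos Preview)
Your proposal is correct and follows essentially the same approach as the paper: reduce via Lemma~\ref{lemma:ball} to the compact set $\mathcal{U}_{ad}\cap\overline{\mathcal{B}_0(\widetilde{C})}$, note that invertibility of $Z_q(\bm{u})$ makes $\widetilde{J}$ continuous, and apply Weierstrass. Your version simply spells out in more detail the compactness, nonemptiness, and continuity verifications that the paper's proof leaves implicit.
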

\begin{proof}
    Invertibility of the impedance matrix $Z_q(\bm{u})$ guarantees that the reduced cost $\widetilde{J}$ is a continuous function of $\bm{u}$. Lemma \ref{lemma:ball} implies that the problem can be equivalently posed on the compact set $\mathcal{U}_{ad} \cap \overline{\mathcal{B}_0\left(\widetilde{C}\right)}$. The existence of a minimum in such set is guaranteed by the Weierstrass theorem.
\end{proof}

\begin{theo} \label{theo:existconstr}
    Under the assumptions of Lemma \ref{lemma:negativeJ}, Problem \eqref{eq:redoptprob} admits a solution for any $\theta$.
\end{theo}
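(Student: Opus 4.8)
The plan is to repeat the argument of Theorem~\ref{theo:existunconstr} with the admissible set $\mathcal{U}_{ad}$ replaced by the feasible set
$$\mathcal{F}_\theta := \left\{ \bm{u} \in \mathcal{U}_{ad} \;:\; h_\ell\!\left(\hat{\bm{\zeta}}^\theta(\bm{u})\right) = 0,\ \ell=1,\dots,N_b \right\}.$$
Three things have to be checked: that $\mathcal{F}_\theta$ is closed, that it is nonempty, and that the outer-ball truncation of Lemma~\ref{lemma:ball} still works when its reference point is taken inside $\mathcal{F}_\theta$. I would establish closedness first: by Lemma~\ref{lemma:Zinvertible} and the discussion following it, $Z_q(\bm{u})$ is invertible on $\mathcal{U}_{ad}$, so $\bm{u}\mapsto\hat{\bm{\zeta}}^\theta(\bm{u})$ is continuous there, and since each $h_\ell$ is continuous in $\hat{\bm{\zeta}}$, $\mathcal{F}_\theta$ is the intersection of the closed set $\mathcal{U}_{ad}$ with the preimages of $\{0\}$ under the continuous maps $\bm{u}\mapsto h_\ell(\hat{\bm{\zeta}}^\theta(\bm{u}))$, hence closed.

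For nonemptiness I would use Lemma~\ref{lemma:asympt}: since $\hat{\zeta}_{q\ell}^\theta(\bm{u})\to 0$ as $\|\bm{u}\|_2\to\infty$, one has $g_\ell(\hat{\bm{\zeta}}^\theta(\bm{u}))\to\sum_q|\hat{\eta}_{q\ell}|^2-2\alpha^2 d_\ell^2$. Hence, under the physically natural assumption that a device held fixed is not slammed, i.e. $\sum_q|\hat{\eta}_{q\ell}|^2<2\alpha^2 d_\ell^2$ for every $\ell$ (equivalently, the incident wave alone has root-mean-square value below $\alpha d_\ell$ at each device), there is $R>0$ such that every $\bm{u}\in\mathcal{U}_{ad}$ with $\|\bm{u}\|_2\ge R$ satisfies $g_\ell(\hat{\bm{\zeta}}^\theta(\bm{u}))<0$, hence $h_\ell(\hat{\bm{\zeta}}^\theta(\bm{u}))=0$, for all $\ell$; since $\mathcal{U}_{ad}$ is unbounded such controls exist, so $\mathcal{F}_\theta\neq\emptyset$. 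Alternatively, $\mathcal{F}_\theta\neq\emptyset$ can simply be added to the hypotheses, in which case this step is skipped.

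Then I would conclude as in Theorem~\ref{theo:existunconstr}. Fix $\bm{u}_0\in\mathcal{F}_\theta$ and put $M:=\widetilde{J}(\bm{u}_0;\theta)<0$ (Lemma~\ref{lemma:negativeJ}). Exactly as in the proof of Lemma~\ref{lemma:ball}, Lemmas~\ref{lemma:negativeJ} and~\ref{lemma:asympt} yield a radius $\widetilde{C}\ge\|\bm{u}_0\|_2$ with $\widetilde{J}(\bm{u};\theta)>M$ for all $\bm{u}\in\mathcal{U}_{ad}\setminus\overline{\mathcal{B}_0(\widetilde{C})}$, so that $\inf_{\mathcal{F}_\theta}\widetilde{J}=\inf_{\mathcal{F}_\theta\cap\overline{\mathcal{B}_0(\widetilde{C})}}\widetilde{J}$ and any minimizer of the right-hand problem is a minimizer over $\mathcal{F}_\theta$. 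The set $\mathcal{F}_\theta\cap\overline{\mathcal{B}_0(\widetilde{C})}$ is closed, bounded and nonempty (it contains $\bm{u}_0$), hence compact; $\widetilde{J}(\cdot;\theta)$ is continuous on it thanks to invertibility of $Z_q(\bm{u})$; the Weierstrass theorem then gives the minimizer.

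The delicate point is the nonemptiness of $\mathcal{F}_\theta$. The constraints $h_\ell=0$ couple all bodies and all frequencies through $\hat{\bm{\zeta}}^\theta(\bm{u})$, and $2N_b$ real control parameters are far from enough to annihilate the residuals $\hat{\zeta}_{q\ell}-\hat{\eta}_{q\ell}$ componentwise; moreover the device motion can amplify rather than reduce $w_\ell$ near resonance, so feasibility is not automatic. The way around this is precisely the large-damping limit of Lemma~\ref{lemma:asympt}, which reduces feasibility to the mild standing-wave bound stated above. The rest is a routine transcription of the unconstrained argument, using only continuity of $\bm{u}\mapsto\hat{\bm{\zeta}}^\theta(\bm{u})$ and the finite dimension of the control space.
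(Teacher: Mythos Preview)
Your proof follows the same route as the paper's: restrict to the feasible set, intersect with the ball of Lemma~\ref{lemma:ball}, use continuity of $\bm{u}\mapsto h_\ell(\hat{\bm{\zeta}}^\theta(\bm{u}))$ to get closedness, and apply Weierstrass. The paper's version simply writes $\mathcal{U}'_{ad}=\mathcal{U}_{ad}\cap\overline{\mathcal{B}_0(\widetilde{C})}\cap\bigl(\cap_\ell h_\ell^{-1}(0)\bigr)$ and declares it compact (it even calls each $h_\ell^{-1}(0)$ compact, when only closedness follows from continuity; the intersection with the ball is what actually gives boundedness). Where you go beyond the paper is in treating nonemptiness of $\mathcal{F}_\theta$: the paper's proof is silent on this point, whereas you use the asymptotics of Lemma~\ref{lemma:asympt} to show that large controls are feasible provided the incident wave alone satisfies $\sum_q|\hat{\eta}_{q\ell}|^2<2\alpha^2 d_\ell^2$ at each device. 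That mild standing hypothesis (or, as you note, simply assuming $\mathcal{F}_\theta\neq\emptyset$) is genuinely needed for the argument to go through, and your identification of it is the only substantive difference between the two proofs.
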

\begin{proof}
    Define $h_\ell(\bm{u}) := h_\ell(\hat{\bm{\zeta}^{\theta}}(\bm{u}))$, $h_\ell : \mathbb{R}^{2N_b} \to \mathbb{R}_+$.
    We equivalently recast \eqref{eq:redoptprob}  as 
    $\min_{\bm{u}\in \mathcal{U'}_{ad}} \widetilde{J}(\bm{u}; \theta)$,
    where 
    $
    \mathcal{U'}_{ad} = \mathcal{U}_{ad} \cap \overline{\mathcal{B}_0\left(\widetilde{C}\right)} \cap\left( \cap_{\ell=1}^{N_b} h_\ell^{-1}(0)\right)
    $.
    Since $\{0\}$ is a compact set and $h_\ell$ is continuous for all $\ell$, $h_\ell^{-1}(0)$ is compact. Then, $\mathcal{U'}_{ad}$ is also compact and the same reasonings of Theorem \ref{theo:existunconstr} can be applied.
\end{proof}

In the stochastic case, \eqref{eq:redoptprob} turns into

\begin{equation}
\begin{split}
		\min_{\bm{u}\in \mathcal{U}_{ad}} \widetilde{j}(\mathbf{u}) := \mathbb{E}_\theta\left(\widetilde{J}(\bm{u};\theta)\right) \\
		\text{s.t.} \quad
        \mathbb{E}_\theta\left( h_\ell \right) = 0, \quad \ell=1, \dots, N_b,	
	\label{eq:redstochJ}
 \end{split}
\end{equation}
where $\mathbb{E}_\theta$ denotes the expected value with respect to $\theta$.

\begin{theo}
    Problem 
    \begin{equation*}
    \begin{split}
		\min_{\bm{u}\in \mathcal{U}_{ad} \cap \overline{\mathcal{B}_0\left(\widetilde{C}\right)}} \widetilde{j}(\mathbf{u} ) := \mathbb{E}_\theta\left(\widetilde{J}(\bm{u};\theta)\right) \\
		\text{s.t.} \quad
        \mathbb{E}_\theta\left( h_\ell \right) = 0, \quad \ell=1, \dots, N_b	
    \end{split}
\end{equation*}
admits a solution for all $\widetilde{C}>0$.
\end{theo}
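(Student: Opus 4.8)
The plan is to follow the template of Theorems~\ref{theo:existunconstr} and~\ref{theo:existconstr}: exhibit the feasible set as a compact subset of $\mathbb{R}^{2N_b}$ on which the objective is continuous, and invoke the Weierstrass theorem. Writing $G_\ell(\bm u) := \mathbb{E}_\theta\!\left(h_\ell(\hat{\bm\zeta}^\theta(\bm u))\right)$, the feasible set is $\mathcal{U'}_{ad} := \mathcal{U}_{ad}\cap\overline{\mathcal{B}_0(\widetilde C)}\cap\bigcap_{\ell=1}^{N_b}G_\ell^{-1}(\{0\})$, and the three things to check are: (i) $\widetilde j$ is continuous on $\overline{\mathcal{B}_0(\widetilde C)}\cap\mathcal{U}_{ad}$; (ii) each $G_\ell$ is continuous there, so that $\mathcal{U'}_{ad}$ is closed, hence compact; (iii) $\mathcal{U'}_{ad}\neq\emptyset$.

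The core step is (i)--(ii), which both reduce to moving a limit inside the expectation, i.e.\ to a dominated-convergence argument with a bound uniform in $\theta$. For fixed $\theta$, $\bm u\mapsto\widetilde J(\bm u;\theta)$ is continuous on $\mathcal{U}_{ad}$ because $Z_q(\bm u)$ is invertible there by Lemma~\ref{lemma:Zinvertible} (this is exactly the reduction leading to~\eqref{eq:redoptprob}); and $\theta\mapsto\widetilde J(\bm u;\theta)$ is continuous because $\hat{\bm F}^\theta_q$ depends continuously on $\theta$ through~\eqref{eq:incpotential} and Haskind's relation, while $Z_q$ is independent of $\theta$. For the domination, on the compact set $K:=\overline{\mathcal{B}_0(\widetilde C)}\cap\mathcal{U}_{ad}$ the map $\bm u\mapsto\|Z_q(\bm u)^{-1}\|_2$ is continuous, hence bounded by some $M_q<\infty$, and $\|\hat{\bm F}^\theta_q\|_2\le F_q$ for a constant $F_q$ independent of $\theta$, since the amplitude of $\hat\phi_0$ in~\eqref{eq:incpotential} does not depend on $\theta$. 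Therefore $\|\hat{\bm\zeta}^\theta_q(\bm u)\|_2\le M_qF_q$ for all $(\bm u,\theta)\in K\times\mathbb{R}$, which gives the uniform bounds $|\widetilde J(\bm u;\theta)|\le\tfrac12\sum_q\omega_q^2\,\widetilde C\,(M_qF_q)^2$ from~\eqref{eq:Jdeffreqdom} (using $\|C\|_2\le\|\bm u\|_2\le\widetilde C$) and, similarly, $0\le h_\ell(\hat{\bm\zeta}^\theta(\bm u))\le \Lambda_\ell$. Dominated convergence then yields continuity of $\widetilde j=\mathbb{E}_\theta(\widetilde J(\cdot;\theta))$ and of $G_\ell=\mathbb{E}_\theta(h_\ell)$ on $K$. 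Consequently $G_\ell^{-1}(\{0\})$ is closed, $\mathcal{U}_{ad}$ is closed, $\overline{\mathcal{B}_0(\widetilde C)}$ is compact, so $\mathcal{U'}_{ad}$ is compact; provided it is nonempty, Weierstrass gives a minimizer.

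I expect the main obstacle to be precisely the two points that the deterministic Theorems~\ref{theo:existunconstr}--\ref{theo:existconstr} sidestep. First, the uniform-in-$\theta$ domination above (equivalently, equicontinuity of the family $\{\widetilde J(\cdot;\theta)\}_\theta$): it does go through thanks to the explicit form~\eqref{eq:incpotential} of the incident potential, but it is the one estimate that must be written out carefully, together with measurability of $\theta\mapsto\widetilde J(\bm u;\theta)$ (immediate from joint continuity in $(\bm u,\theta)$). Second, and more delicate, the nonemptiness of $\mathcal{U'}_{ad}$: since $h_\ell\ge 0$, the constraint $G_\ell(\bm u)=0$ forces $h_\ell(\hat{\bm\zeta}^\theta(\bm u))=0$ for a.e.\ $\theta$, and by Lemma~\ref{lemma:asympt} (with the uniform bound above, $\hat{\bm\zeta}^\theta_q(\bm u)\to 0$ uniformly in $\theta$) this holds for $\|\bm u\|_2$ large enough, so the statement is nonvacuous once $\widetilde C$ is taken sufficiently large; for small $\widetilde C$ one should either add the nonemptiness of $\mathcal{U'}_{ad}$ as a hypothesis, or restrict to wave climates for which $\sum_q|\hat\eta_{q\ell}|^2<2\alpha^2 d_\ell^2$. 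I would phrase the theorem accordingly.
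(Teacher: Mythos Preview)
Your proof is correct and follows the same high-level strategy as the paper (continuity of the expected-value functionals on a compact feasible set, then Weierstrass), but the technical step establishing continuity is handled differently. The paper argues via Heine--Cantor: since $\widetilde J$ is jointly continuous in $(\bm u,\theta)$ on the compact product $\bigl(\mathcal{U}_{ad}\cap\overline{\mathcal{B}_0(\widetilde C)}\bigr)\times[0,2\pi]$, it is uniformly continuous, so the modulus of continuity in $\bm u$ is independent of $\theta$, and then $|\widetilde j(\bm u)-\widetilde j(\bm v)|\le\max_\theta|\widetilde J(\bm u,\theta)-\widetilde J(\bm v,\theta)|\int D(\theta)\,\mathrm d\theta<\varepsilon$. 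You instead use dominated convergence, building an explicit $\theta$-uniform bound on $|\widetilde J|$ from $\sup_{\bm u\in K}\|Z_q(\bm u)^{-1}\|_2$ and the $\theta$-independence of the amplitude in~\eqref{eq:incpotential}. Both are valid; yours is more explicit about where the model structure enters, while the paper's is shorter once joint continuity is granted. You also correctly flag the nonemptiness of $\mathcal{U'}_{ad}$, which the paper does not address: for the statement as written (``for all $\widetilde C>0$'') this is a genuine omission in both treatments, and your observation that it holds for $\widetilde C$ large (via Lemma~\ref{lemma:asympt}) or under a smallness hypothesis on the incident wave is the natural fix.
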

\begin{proof}
    Lemma \ref{lemma:negativeJ} guarantees that $\widetilde{J}(\bm{u}, \theta)\leq 0 \, \forall \theta, \, \forall \bm{u} \in \mathcal{U}_{ad}$. Then, we have 
    $$\widetilde{j}(\bm{u}) = \mathbb{E_\theta}\left(\widetilde{J}(\bm{u}, \theta)\right) \leq 0 \quad \forall \bm{u} \in \mathcal{U}_{ad}.$$ 
    Furthermore, we have 
    $$
    \lim_{\|\bm{u}\|_2 \to \infty} \left| \widetilde{j}(\bm{u}) \right| = \lim_{\|\bm{u}\|_2 \to \infty} \left| \int_0^{2\pi} \widetilde{J}(\bm{u}, \theta) D(\theta) \, \mathrm{d}\theta \right| \leq  \lim_{\|\bm{u}\|_2 \to \infty} \left| \max_\theta \widetilde{J}(\bm{u}, \theta) \right| \left| \int_0^{2\pi} D(\theta) \, \mathrm{d}\theta \right| = 0, 
    $$
    where the last equality follows from Lemma \ref{lemma:asympt}. Then, the problem can be equivalently recast in a compact set $\mathcal{U}_{ad} \cap \overline{\mathcal{B}_0\left(\widetilde{C}\right)}$, for some suitable positive constant $\widetilde{C}$. Since such set is compact and $\widetilde{J}$ is a continuous function of both its arguments, the Heine-Cantor theorem implies $\widetilde{J}$ is also uniformly continuous.
    We now use uniform continuity and the fact that $D(\theta)$ is a probability distribution, so that it has unit integral. We have
    $$
    \forall \varepsilon>0 \quad \exists \delta >0 \quad\text{s.t.} \quad \| \bm{u}  - \bm{v} \|<\delta \Rightarrow \left|\widetilde{J}(\bm{u}, \theta) - \widetilde{J}(\bm{v}, \theta)\right| < \varepsilon,
    $$
    where $\delta$ does not depend on $\theta$.  Then, if $\|\bm{u} - \bm{v}\|<\delta$, the following inequality 
    \begin{align*}
        \left| \widetilde{j}(\bm{u}) - \widetilde{j}(\bm{v}) \right| &=
        \left| \int_0^{2\pi} \widetilde{J}(\bm{u}, \theta) D(\theta) \, \mathrm{d}\theta -  \int_0^{2\pi} \widetilde{J}(\bm{v}, \theta) D(\theta) \, \mathrm{d}\theta \right| \\&= \left| \int_0^{2\pi} \left[\widetilde{J}(\bm{u}, \theta) -    \widetilde{J}(\bm{v}, \theta) \right] D(\theta) \, \mathrm{d}\theta \right| \\
        &  \leq \max_{\theta} \left| \widetilde{J}(\bm{u}, \theta) - \widetilde{J}(\bm{v}, \theta)  \right| \left| \int_0^{2\pi} D(\theta) \, \mathrm{d}\theta \right| < \varepsilon
    \end{align*}
    proves that $\widetilde{j}(\bm{u})$ is also continuous.
    Then, existence of a solution for the unconstrained problem follows from the same reasonings applied in the proofs of Lemma \ref{lemma:ball} and Theorem \ref{theo:existunconstr}.

    For the constrained problem, we can prove, as done above for $\widetilde{j}$, that $\mathbb{E}_\theta[h_\ell(\bm{u})]$ is continuous, so its inverse maps $\{0\}$ into a compact set. Then, the remainder of the proof is the same as for Theorem \ref{theo:existconstr}. 

\end{proof}

\subsection{Numerical optimization strategy}

In this section, we present a computational framework for the solution of the stochastic optimization problem \eqref{eq:optprob} modelled in Sections \ref{sec:modeling} and \ref{sec:optmethods}.
Problem \eqref{eq:optprob} has three different constraints that must be appropriately treated: a system of complex algebraic equations (the state problem), the slamming constraint (a constraint on the state) and the positive stiffness constraint (a constraint on the controls).
First, the slamming (state) constraint is treated by using a penalty method, as described in Section \ref{subsec:penalty}.
Our framework is based on a reduced approach working in the space of solutions to the state equation and considers the control $\bm{u}$ as the only optimization variable. 
To do so, the gradient of the (reduced) cost $\widetilde{j}$ (and its penalized counterpart $Q_\mu$) is obtained by a Lagrangian approach leading to an adjoint problem, as show in Section~\ref{sec:lagrangian}.
Each iteration of the penalty method requires the solution of a stochastic problem with control constraint (positive stiffness constraint). 
This is achieved by either an SAA approach (Section~\ref{sec:SAA}), or by a Robust SA (Section~\ref{sec:SA}).
In both cases, the control constraint is enforced by projection onto the admissible set at each iteration. Implementation details and a comparison between SAA and Robust SA are given in Section \ref{subsec:comp}.

\subsubsection{Penalty method}\label{subsec:penalty}
The penalized cost function used to enforce the slamming constraint is defined as
\begin{equation*}
		Q_\mu(\bm{u}) := \widetilde{j}(\mathbf{u}) + \dfrac{\mu}{2} \sum_{\ell=1}^{N_b} \mathbb{E}_\theta[h_\ell(\mathbf{u})]
\end{equation*}
and the penalty method consists in solving the sequence of problems
\begin{equation*}
    \min_{\bm{u} \in \mathcal{U}_{ad}} Q_{\mu}(\bm{u}), \quad \mu \to \infty. 
\end{equation*}
Each of these problems is solved using either sample average approximation or robust stochastic approximation; these methods are described in Sections~\ref{sec:SAA} and \ref{sec:SA}, respectively.
The penalized cost function can be rewritten as an expected value thanks to linearity,
\begin{equation*}
Q_\mu(\bm{u}) = \mathbb{E}_\theta \left[ \widetilde{J}(\mathbf{u}) + \dfrac{\mu}{2} \sum_{\ell=1}^{N_b} h_\ell(\mathbf{u}) \right],
\end{equation*}
and, again because of linearity, also the gradient of $Q_\mu$ may be computed as the expected value of the quantity in brackets.
The optimization procedure is detailed in Algorithm \ref{alg:penalty}.

\begin{algorithm}[h!]
\caption{Stochastic quadratic penalty algorithm}
\label{alg:penalty}
\begin{algorithmic}[1]
\STATE Set $\bm{u}_0$, $\mu_0$, $k_\mu > 1$, $\tau^{out}$, $0<\tau^{out}<1$, $it_{max}^{out}$
\STATE Set $j=0$
\WHILE{$\max_\ell \mathbb{E}_\theta[h_\ell] > \tau^{out}$ \AND $j<it_{max}^{out}$}
\STATE Compute $\bm{u}^{j+1} \in \mathcal{U}_{ad}$ by minimizing $Q^{j+1}$, using either SA or SAA with initial guess $\bm{u}^j$ and tolerance $\tau^j$
\STATE $\mu^{j+1} = k_\mu \mu^j$
\STATE $\tau_{in}^{j+1} = k_\tau \tau^j_{in}$
\STATE $j \leftarrow j+1$
\ENDWHILE
\end{algorithmic}
\end{algorithm}

Convergence results for Algorithm \ref{alg:penalty} can be obtained from  \cite[Th. 17.1, 17.2]{Nocedal2006}. It is proved that, if at each iteration the exact minimum of $Q$ is found and if $\mu_j \to \infty$, then every limit point of the sequence $\bm{u}_j$ is a global solution of the original problem. It is further proved that if  at each iteration the solution of the optimization problem is computed approximately with vanishing tolerance $k_\tau$, as in Algorithm \ref{alg:penalty}, either a limit point of the sequence $\bm{u}_j$ is infeasible and corresponds to a stationary point of the penalty term, or it is a KKT point for the original problem. The above results require differentiability of the penalty term, which is satisfied in our case, as
$\nabla h_\ell = \nabla [g_\ell]^2_+ = 2 [g_\ell]_+ \nabla g_\ell$.
Since convergence is only guaranteed in the limit, the method is terminated when constraint violations are below a finite tolerance, that is when the obtained iterate is in, or close enough to, the feasible set.


\subsubsection{Gradient computation and first-order optimality condition} \label{sec:lagrangian}
Algorithm \ref{alg:penalty} requires finding the minimum of the penalized cost function $Q_\mu$. In order to do this, the computation of its gradient is needed. The SA method needs a stochastic gradient at each iterate, that is, a gradient computed for a single realization of the wave angle $\theta$. The SAA method, instead, needs an estimate of the true gradient, which is the expected value of the stochastic gradient and thus can be computed as a suitable average of stochastic gradients.
For a single given angle $\theta$, the gradient of the reduced cost can be obtained by using the Lagrangian function defined as
\begin{equation*}
\mathcal{L}(\bm{u}, \hat{\bm{\zeta}}^\theta, \bm{y}) = J(\bm{u}, \hat{\bm{\zeta}}^\theta) + \dfrac{\mu}{2}\sum_{\ell=1}^{N_b}[g_\ell(\mathbf{u})]_+^2 + \sum_{q=1}^{N_f} \Re \left[ \bm{y}_q^H \left( Z_q(\bm{u}) \hat{\bm{\zeta}}_q^\theta - \hat{\bm{F}}_q^\theta \right) \right].
\end{equation*}
The problem is set in the space of complex vectors over the field of real numbers, with scalar product $(u, v) = \Re[u^H v]$, thus the presence of the real part in the Lagrangian.
The adjoint equation is obtained by differentiation with respect to the state vector,
\begin{equation*}
Z_q^H(\bm{u}) \bm{y}_q = \omega^2_q C \hat{\bm{\zeta}}_q^\theta - 2 \mu V(\hat{\bm{\zeta}}_q^\theta - \bm{\eta}_q), \quad q=1,\dots, N_f,
\end{equation*}
where $V$ is a diagonal matrix
\begin{equation*}
V_{\ell\ell} = [g_\ell(\hat{\bm{\zeta}})]_+, \quad i=1,\dots,N_b,
\end{equation*}
that is independent of the frequency.
The components of the stochastic gradient are obtained by differentiating with respect to the controls:
\begin{equation*}
\dfrac{\partial \mathcal{L}}{\partial c_\ell}(\bm{u}, \hat{\bm{\zeta}}^\theta, \bm{y}) = - \sum_{q=1}^{N_f} \left( \dfrac{1}{2} \omega_q^2 | \hat{\zeta}_{\ell, q} |^2 + \Re[j \omega_q y_{\ell,q}^* \hat{\zeta}_{\ell,q}] \right),
\end{equation*}
\begin{equation*}
\dfrac{\partial \mathcal{L}}{\partial s_\ell}(\bm{u}, \hat{\bm{\zeta}}^\theta, \bm{y}) = 
\sum_{q=1}^{N_f} \Re[y_{\ell,q}^* \hat{\zeta}_{\ell,q}],
\end{equation*}
\begin{equation}
G(\bm{u}, \theta) = \dfrac{\partial \mathcal{L}}{\partial \bm{u}}(\bm{u}, \hat{\bm{\zeta}}^\theta, \bm{y}) = \left[ \dfrac{\partial \mathcal{L}}{\partial c_1}, \dots, \dfrac{\partial \mathcal{L}}{\partial c_{N_b}}, \dfrac{\partial \mathcal{L}}{\partial s_1}, \dots, \dfrac{\partial \mathcal{L}}{\partial s_{N_b}} \right].
\label{eq:stochgrad}
\end{equation}
The first-order optimality condition requires \cite{Nocedal2006} 
\begin{equation*}
    \bm{u} - \mathcal{P}_{\mathcal{U}_{ad}}\left(\bm{u} - \mathbb{E}_\theta[G(\bm{u}, \theta)] \right) = 0,
\end{equation*}
where $\mathcal{P}_{\mathcal{U}_{ad}}$ is the projector onto $\mathcal{U}_{ad}$:
\begin{equation}
\begin{split}
\mathcal{P}(\bm{u}) &= \mathcal{P}[c_1, \dots, c_{N_b}, s_1, \dots, s_{N_b}] \\
& = [c_1, \dots, c_{N_b}, [s_1]_+, \dots, [s_{N_b}]_+].
\end{split}
\label{eq:projdef}
\end{equation}


\subsubsection{SAA}\label{sec:SAA}
The sample average approximation (SAA) method is based on writing the expected value in Problem~\eqref{eq:redstochJ} as an integral, and approximating such integral with a suitable quadrature rule:
\begin{equation*}
\mathbb{E}_\theta \left[ \widetilde{J}(\bm{u}; \theta) \right] = \int_0^{2\pi} \widetilde{J}(\bm{u}; \theta) \, D(\theta) \, \mathrm{d\theta} \approx \sum_{i=1}^N \widetilde{J}(\bm{u}; \theta_i) \, w_i.
\end{equation*}
The same approximation is applied to the gradient. One then obtains a deterministic problem that can be solved using standard optimization algorithms. In this work, we use the gradient descent method with step length determined according to the Armijo rule \cite[Chap. 3]{Nocedal2006}.
The quadrature rule is defined by the choice of nodes $\theta_i$ and weights $w_i$. In the following, we consider the Monte Carlo \cite{Shapiro2014} and Gauss-Legendre \cite{Quarteroni2006,Trefethen2019} methods. 

For the Monte Carlo method, the nodes $\theta_i$ are obtained by sampling  the wave direction, following the procedure described in Section \ref{sec:stochforce}, and the choice of weights corresponds to averaging, $w_i = 1/N$. In the case of Gauss-Legendre, instead, nodes are defined as the zeros of Legendre polynomials. Numerical libraries provide nodes and weights $\widehat{\theta}_i$, $\widehat{w}_i$ on the interval $[-1,1]$, which are then mapped to the desired interval $[a,b]$ using
\begin{equation*}
    \theta_i = \dfrac{a+b}{2} +  \dfrac{b-a}{2}\widehat{\theta}_i, \quad w_i = \dfrac{b-a}{2} D(\theta_i) \widehat{w_i}.
\end{equation*}
In our case, the interval $[a, b]$ is chosen as $[\theta_{min}, \theta_{max}]$.
Here, $\theta_{min}$ and $\theta_{max}$ define an effective interval of wave angles,  which depends on the parameters $\beta$, $\theta_0$, such that the probability is considered negligible outside such interval.
Notice that each weight for Gauss-Legendre integration includes the value of the probability distribution function in the corresponding node. This is not the case for Monte Carlo, where the probability distribution is accounted for in the sampling phase.

\begin{figure}[h!]
\centering
\subfloat{\includegraphics[width=0.45\textwidth]{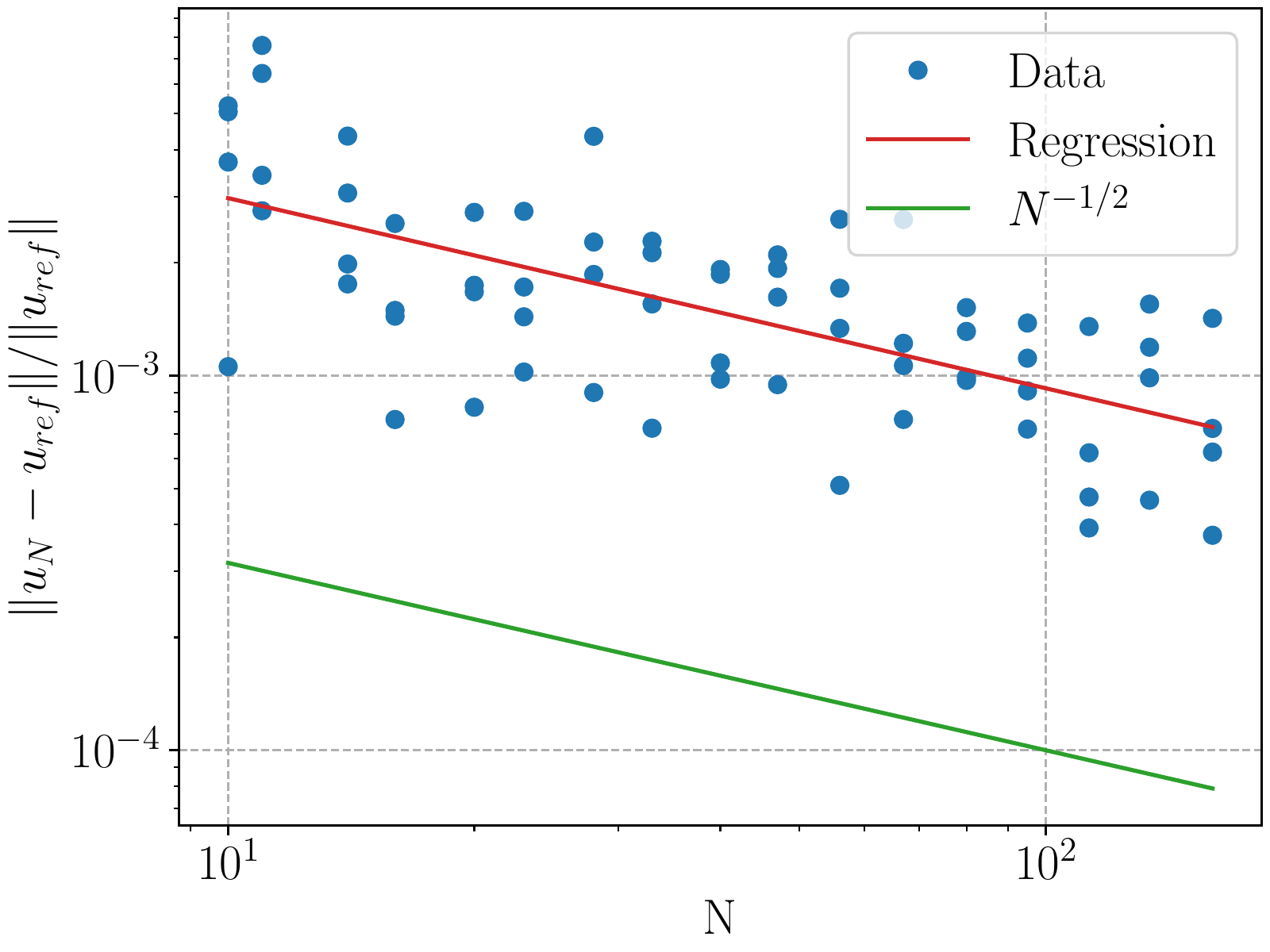}}
\hfill
\subfloat{\includegraphics[width=0.45\textwidth]{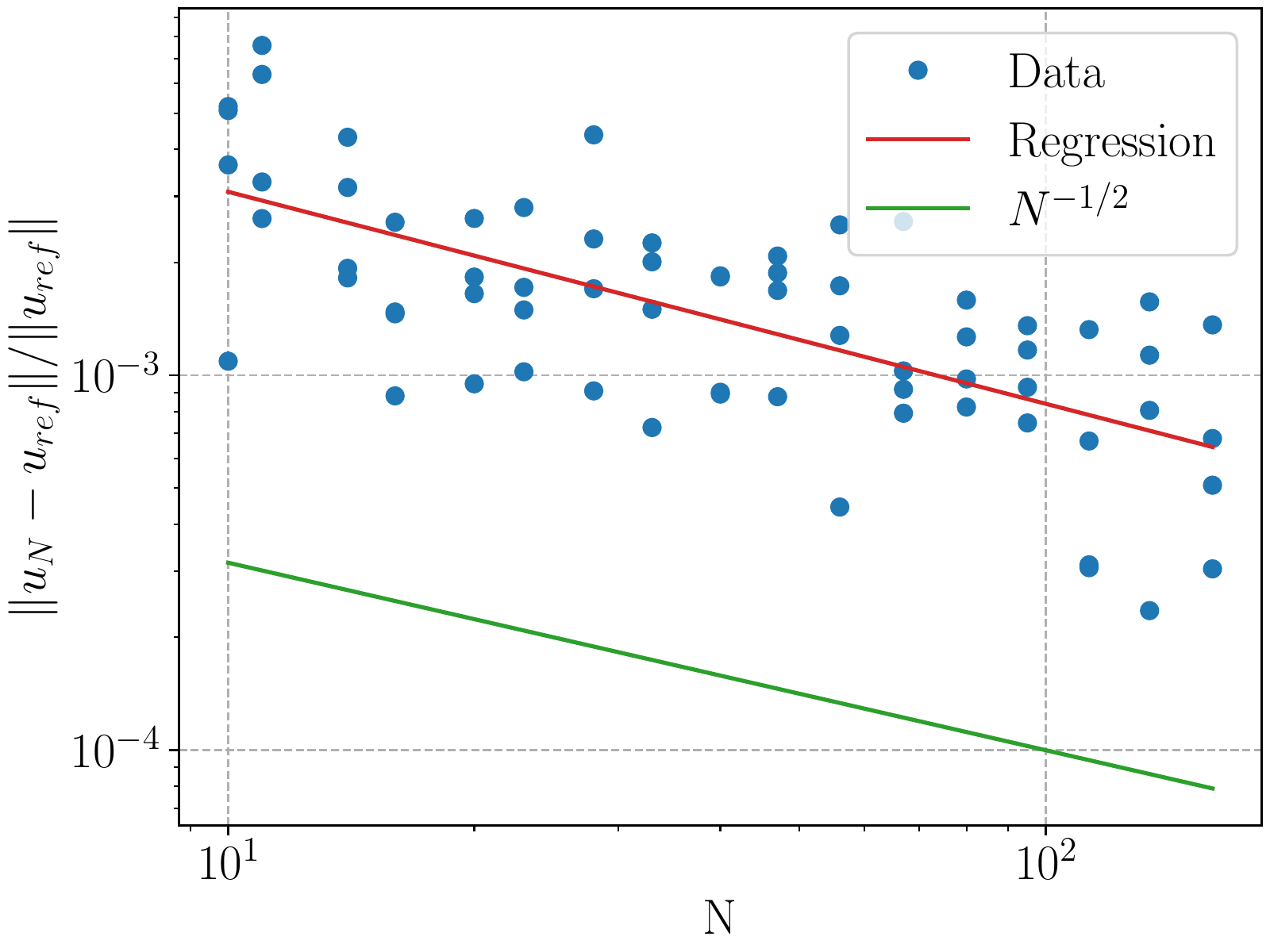}}
\caption{Relative error in the constrained case of the Monte Carlo method with respect to a solution obtained with Gauss-Legendre using $10$ (left) and $320$ (right) nodes}
\label{fig:MCconv}
\end{figure}


\begin{figure}[h!]
\centering
\subfloat{
\includegraphics[width=0.45\textwidth]{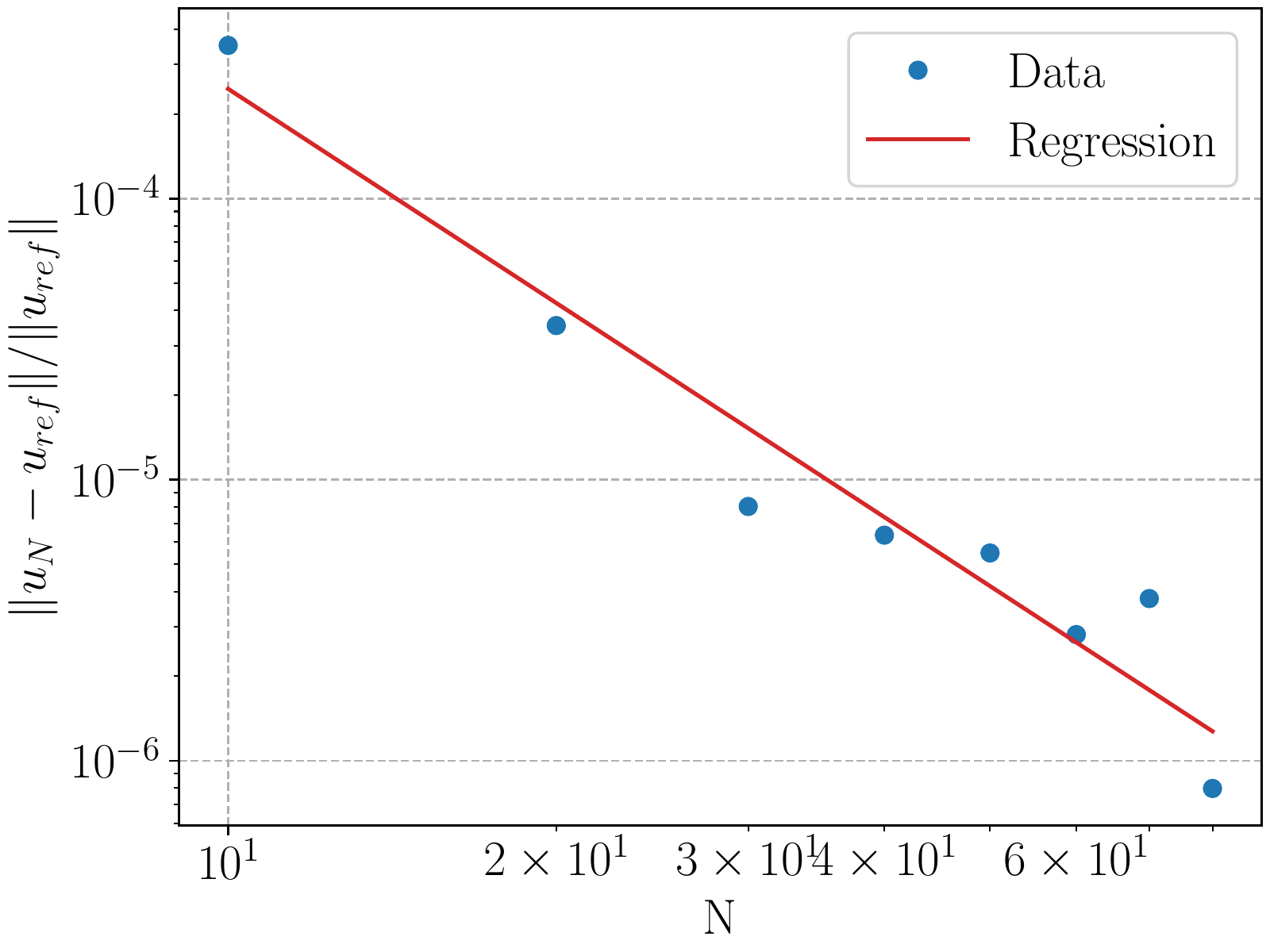}}
\hfill
\subfloat{
\includegraphics[width=0.45\textwidth]{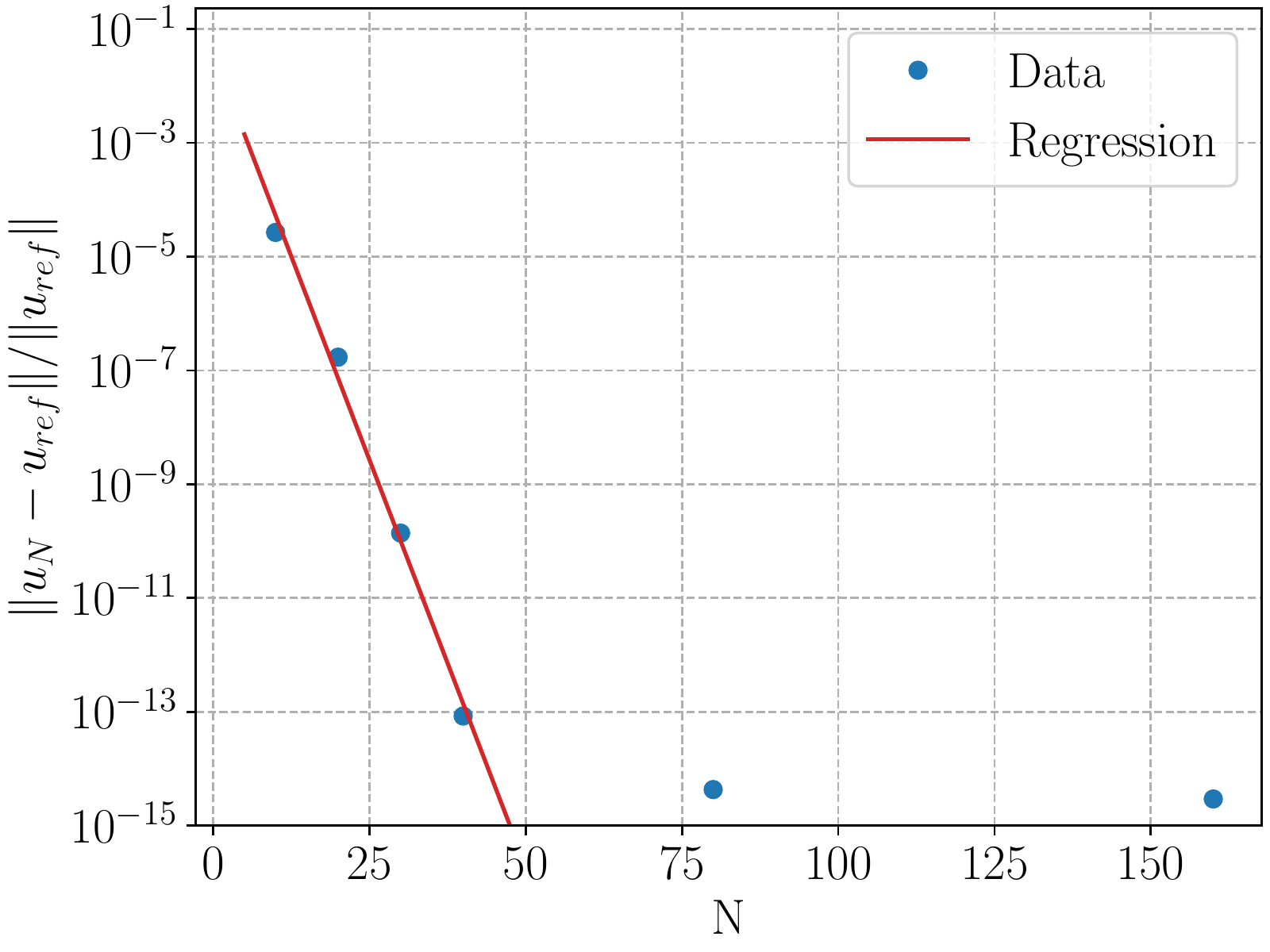}}
\caption{Left: Relative error in the constrained case of Gauss-Legendre method with respect to a reference solution computed with 320 nodes. Right: Relative error in the unconstrained case of the Gauss-Legendre method with respect to a reference solution computed with 320 nodes.}
\label{fig:GLconv}
\end{figure}

Convergence of the two methods is studied on a test case with 15 bodies, enforcing the slamming constraint. The results are reported in Figure \ref{fig:MCconv} for the Monte Carlo method and in Figure \ref{fig:GLconv} (left) for the Gauss-Legendre method. Since the exact solution of the optimization problem is not known, two reference solutions are computed using Gauss-Legendre, one with 10 nodes, and another with 320 nodes. The convergence rate of Monte Carlo is estimated by computing the slope of the regression line in logarithmic scale. For Monte Carlo, multiple points are available for each value of $N$, obtained by repeating the computation with different sets of samples. Using the reference solution computed with 10 nodes yields a slope of -0.507 with standard error 0.070, while using the reference solution computed with 320 nodes yields a slope of -0.565 with standard error 0.074. Both values are consistent with the expected convergence rate $-1/2$. This result shows that a Gauss-Legendre solution obtained with a very limited sample size is very accurate compared to a Monte Carlo solution with larger sample size. 

The order of convergence of Gauss-Legendre, instead, is estimated at a value of 2.53, while repeating the test for the unconstrained problem leads to a spectral convergence of the Gauss-Legendre method, as shown in Fig. \ref{fig:GLconv} (right). 
In particular, we obtain a behaviour of the type $k \exp(-mN)$, where $m \approx 0.658$. The loss of spectral convergence in the constrained case is explained by the presence of the max operator in the slamming constraint; such operator is present in the right hand side of the adjoint equation. 


\subsubsection{Robust SA}\label{sec:SA}
The stochastic approximation method (SA) moves, at each iteration, in the direction of a gradient computed from a single sample of the random variable. We call this vector the stochastic gradient. Since there is no guarantee that the stochastic gradient defines a descent direction, the line-search stepsize strategies suitable for deterministic problems cannot be applied. Instead, the sequence of stepsizes is fixed once the initial step is chosen.
A suitable initial step $t_0$ can be estimated by performing a line search using a single sample of the stochastic gradient.

Some stepsize strategies for SA method are discussed in \cite{Nemirovski2009}.  We choose the robust variable stepsize strategy, that is, the sequence of steps is
\begin{equation}
t^k = \dfrac{t^0}{\sqrt{k+1}}, \quad  k=0,1, \dots
\label{eq:stepsize}
\end{equation}
A convergence estimate is available, based on the assumptions that 
\begin{equation*}
\exists M>0 \quad \text{s.t.} \quad \mathbb{E}_\theta [\| G(\bm{u}, \theta) \|_2^2] \leq M^2 \quad \forall \bm{u} \in \mathcal{U}_{ad}, 
\end{equation*}
that $Q$ is convex and that the admissible set $\mathcal{U}_{ad}$ is convex
\begin{equation*}
\mathbb{E}\left[ Q\left(\tilde{\bm{u}}_K^N\right) - Q(\bm{u}_*) \right] \leq \dfrac{c_Q}{\sqrt{N}},
\end{equation*}
where $N$ is the number of iterations, $c_Q$ is a constant independent on $N$ and $\tilde{\bm{u}}_K^N$ is a weighted average defined as
\begin{equation}
\tilde{\bm{u}}_i^j = \sum_{k=i}^j t^k \bm{u}^k.
\label{eq:averaging}
\end{equation} 
Such average is taken as the solution of the optimization problem.

As a stopping indicator, we use the norm of an average of the stochastic gradients on the most recent iterations: 
\begin{equation}
\text{ind} = \left\| \dfrac{1}{W} \sum_{k=N-W}^N G^k \right\| \leq \tau_{in},
\label{eq:stopind}
\end{equation}
where $W$ is the window size. 
This is a modification of a criterion proposed in \cite{Patel2021} and it can be interpreted as an estimate of the gradient of the corresponding deterministic problem. Such quantity appears in the convergence proof of the penalty method mentioned above, suggesting that our choice is reasonable.

\begin{algorithm}[H]
\caption{Robust Stochastic Approximation (variable stepsize)}
\begin{algorithmic}[1]
\STATE Set initial step $t$, window size $W$
\WHILE{$k\leq it_{max}^{out}$ or $ind>\tau_{in}$}
\STATE Extract $\theta$
\FOR{$q=1:N_f$}
\STATE Compute $\hat{\mathbf{F}}^\theta_q$
\STATE Find $\hat{\bm{\zeta}}_q$ by solving the state problem with $(\textbf{u}_k,\hat{\mathbf{F}}^\theta_q)$ 
\STATE Find $\mathbf{y}_q$ by solving the adjoint problem with $(\textbf{u}_k,\hat{\mathbf{F}}^\theta_q)$
\ENDFOR
\STATE Compute the stochastic gradient $G$ from \eqref{eq:stochgrad}
\STATE Update $
\textbf{u}^{k+1} = \mathcal{P}(\textbf{u}^{k} - t^k G) $, with $\mathcal{P}$ from \eqref{eq:projdef} and $t^k$ from \eqref{eq:stepsize} 
\STATE Compute $\tilde{\textbf{u}}^{k+1}_{k+1-W}$ from \eqref{eq:averaging}
\STATE Compute the stopping indicator $ind$ from \eqref{eq:stopind}
\ENDWHILE
\end{algorithmic}
\end{algorithm}

\subsubsection{Implementation and comparison between SAA and robust SA}\label{subsec:comp}
Hydrodynamic data is computed using the code Capytaine \cite{Ancellin2019}, version 1.3. The Haskind relation is applied to compute diffraction properties. Such routine and the numerical optimization codes are written in Python, using Numba for performance improvement. 

\begin{figure}
\centering
\subfloat{\includegraphics[width=0.45\textwidth]{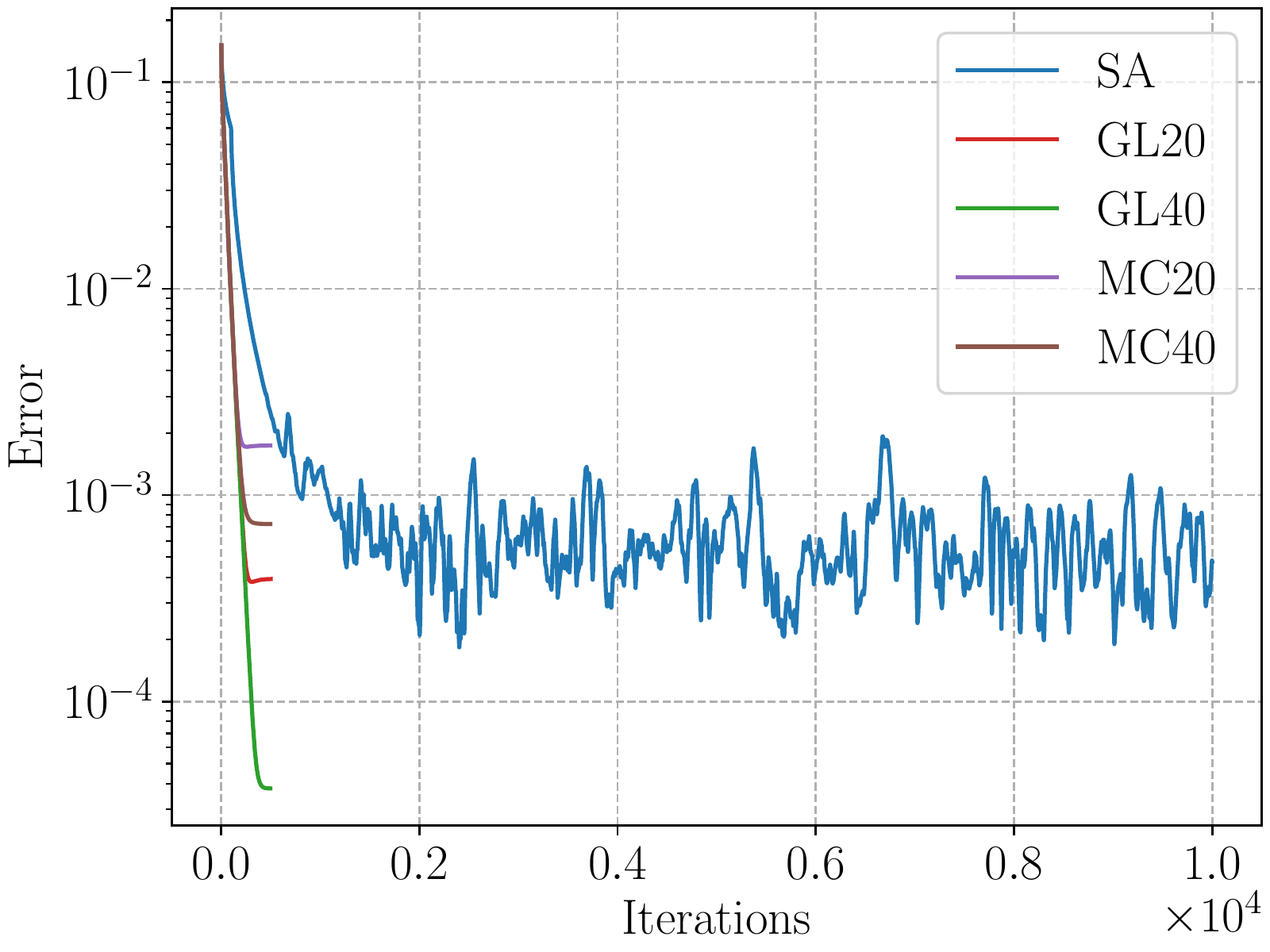}}
\hfill
\subfloat{\includegraphics[width=0.45\textwidth]{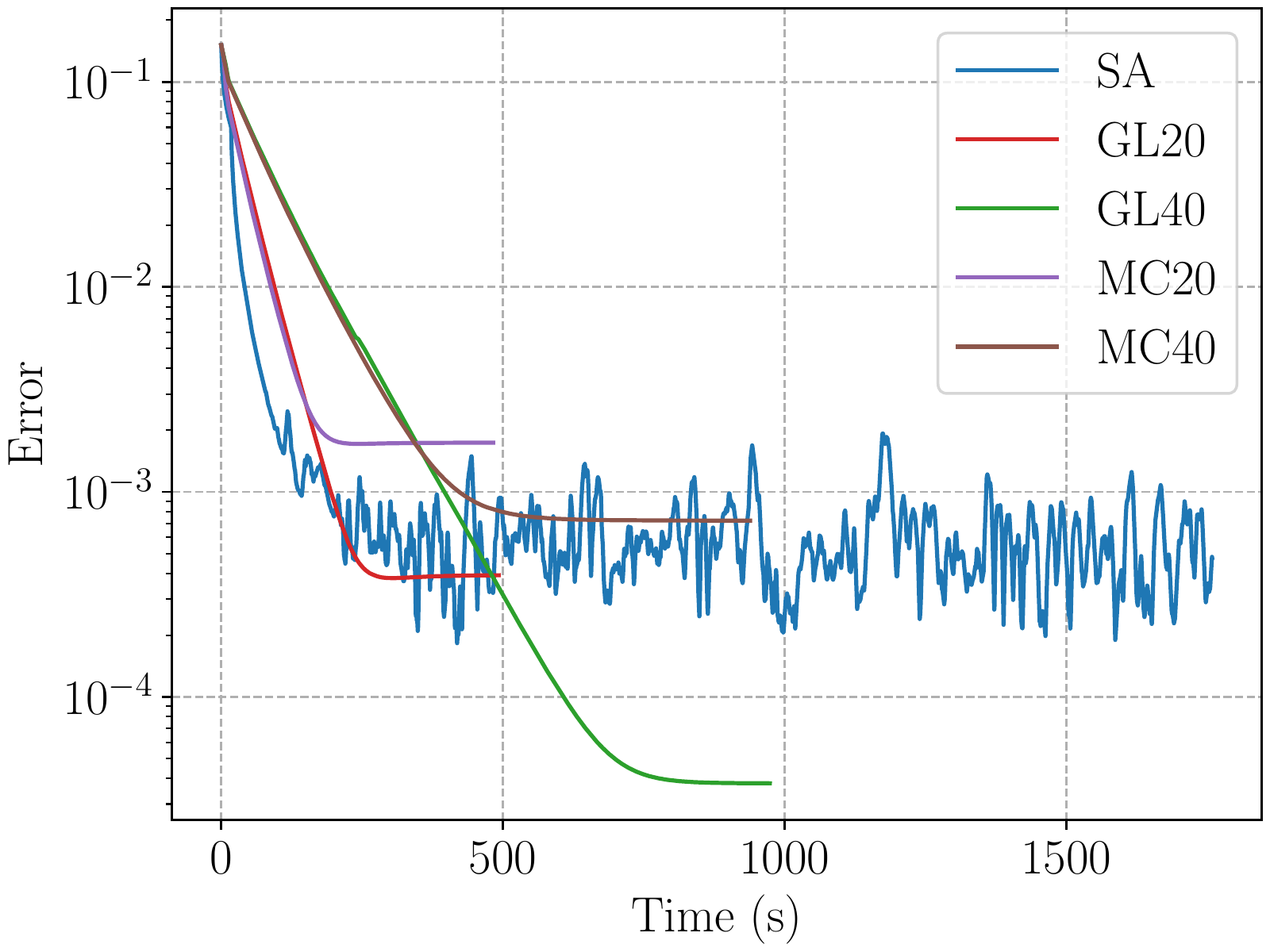}}
\caption{History of relative error on the optimal control with respect to iterations (left) and time (right). Numbers in the labels refer to the number of nodes (Gauss-Legendre) or samples (Monte Carlo)}
\label{fig:timeitersSAvsSAA}
\end{figure}
The performances of stochastic approximation and sample average approximation using Monte Carlo and Gauss-Legendre integration are compared in Fig.~\ref{fig:timeitersSAvsSAA}. A constrained case with 15 bodies and fixed penalization parameter has been considered, using the same initial guess and initial step.
The reference solution was obtained by a computation with Gauss-Legendre using 320 nodes. The relative error $\|\mathbf{u} - \mathbf{u}_{ref}\|/\|\mathbf{u}_{ref}\|$ is plot against execution time and number of iterations. The test is run on an average laptop. 
We can observe that the SAA methods require a much smaller number of iterations with respect to SA, but, since for the former many problems need to be solved at each iteration, the methods are comparable in terms of computational time. In terms of number of iterations, the histories of Monte Carlo and Gauss-Legendre are almost identical up to a certain value of the error, after which Gauss-Legendre exhibits better accuracy. However, in terms of computational times, the histories of the two methods are similar only when the same number of nodes is used.
The error of stochastic approximation, instead, initially decreases at a rate comparable to (in terms of number of iterations) or faster than (in terms of computational time) the other methods, and then starts oscillating, as expected from an SA approach (see, e.g., \cite{Nemirovski2009} and references therein).
Moreover, the results of Fig.~\ref{fig:timeitersSAvsSAA} provide useful insights into the choice of the method to use in a design phase. If one seeks indicative results in relatively short times, then an SA approach is indeed a reasonable choice: on the one hand, it converges faster in the initial phase when the starting point is far from the optimal solution; on the other hand, it does not require the choice of a number of nodes or of an effective interval of wave angles where the probability distribution is correctly represented. Instead, if one is interested in highly accurate results, then Gauss-Legendre is the method to choose. However, this strategy requires an appropriate choice of an effective interval of wave angles, which is not needed by a Monte Carlo approach. Moreover, if the number of stochastic parameters becomes very large, then the Gauss-Legendre method could become unfeasible, and  an SA strategy is more appropriate.

\section{Numerical experiments}\label{sec:numexp}

The numerical framework presented in the previous sections was used to compute the optimal control parameters of arrays of two different models of WECs (see Table \ref{tab:bodies}). In particular, the results presented in this section are obtained by simulations run using SA. The data for the first case is taken from \cite{Goeteman2014}. The second case is derived from the first by rescaling the generator mass proportionally with respect to the volume. In the first case, the resonance frequency of the uncontrolled body is higher than the peak spectrum frequency. The second case is tuned in such a way that the uncontrolled body frequency is lower than the peak spectrum frequency.

In both cases, the region of the spectrum containing $99.9\%$ of the power is discretized into 30 bins of equal power, so that each realization is obtained as a superposition of 30 harmonics. 

To quantify the performance of the obtained configurations, we define the interaction factor $q$ as the ratio between the total average power produced by the array and the sum of the powers of the isolated devices:
\begin{equation*}
    q = \dfrac{\sum_{\ell=1}^{N_b} P_\ell}{N_b \, P_\text{isolated}}.
\end{equation*}
\begin{table}[h!]
\centering
\begin{tabular}{@{}llllllll@{}}
\toprule
         & \multicolumn{1}{c}{\begin{tabular}[c]{@{}c@{}}radius \\ (m)\end{tabular}} & \multicolumn{1}{c}{\begin{tabular}[c]{@{}c@{}}draft \\ (m)\end{tabular}} & \multicolumn{1}{c}{\begin{tabular}[c]{@{}c@{}}generator \\ mass (kg)\end{tabular}} & \multicolumn{1}{c}{\begin{tabular}[c]{@{}c@{}}generator \\ stiffness (N/m)\end{tabular}} & \multicolumn{1}{c}{\begin{tabular}[c]{@{}c@{}}depth \\ (m)\end{tabular}} & \multicolumn{1}{c}{\begin{tabular}[c]{@{}c@{}}$H_s$ \\ (m)\end{tabular}} & \multicolumn{1}{c}{\begin{tabular}[c]{@{}c@{}}$T_p$ \\ (s)\end{tabular}} \\ \midrule
Case 1 & 2.5                                                                       & 0.5                                                                      & 2560                                                                               & 4000                                                                                     & 50                                                                       & 1.53                                                                     & 5.83                                                                     \\
Case 2 & 9                                                                         & 1.5                                                                      & 155520                                                                             & 4000                                                                                     & 50                                                                       & 1.53                                                                     & 3.49                                                                     \\ \bottomrule
\end{tabular}
\caption{Body parameters and wave climates}
\label{tab:bodies}
\end{table}

\subsection{Single-body optimization}
We first consider optimization for a single device. For case 1, if negative stiffnesses are allowed, the optimal power is 7636 W, achieved by setting the control parameters to $c = 31820.7 \text{ N}\cdot\text{s/m}$, $s = -27022.2 \text{ N/m}$. 
The outer convergence history is shown in Figure \ref{fig:1cconvmap} (left), superimposed on a contour map of the absorbed power. The region above the red dashed line is feasible with respect to the slamming constraint, and the black dot is the constrained optimum obtained by exhaustive search over a rectangular grid. If instead only positive values of stiffness are accepted, the optimal power is 5886 W, and the corresponding controls are $c = 61774.8 \text{ N}\cdot\text{s/m}$, $s = 0$. Figure \ref{fig:1cconvmap} (right) 
shows the statistical meaning of the slamming constraint, which has been formulated in Section \ref{sec:constraints} in the frequency domain. The fraction of peaks exceeding the bound can be controlled by varying $\alpha$ in \eqref{eq:slammingrmsconstraint}.

For case 2, the optimal power is 5712 W, obtained with controls $c = 733196 \text{ N}\cdot\text{s/m}$, $s = 1411344 \text{ N/m}$, without imposing constraints on the sign of $s$.

The results of this section are used as initial guesses for the multi-body optimization runs: all devices are initially assigned the same control parameters. 

\begin{figure}[t]
\centering
\subfloat{\includegraphics[scale=0.44]{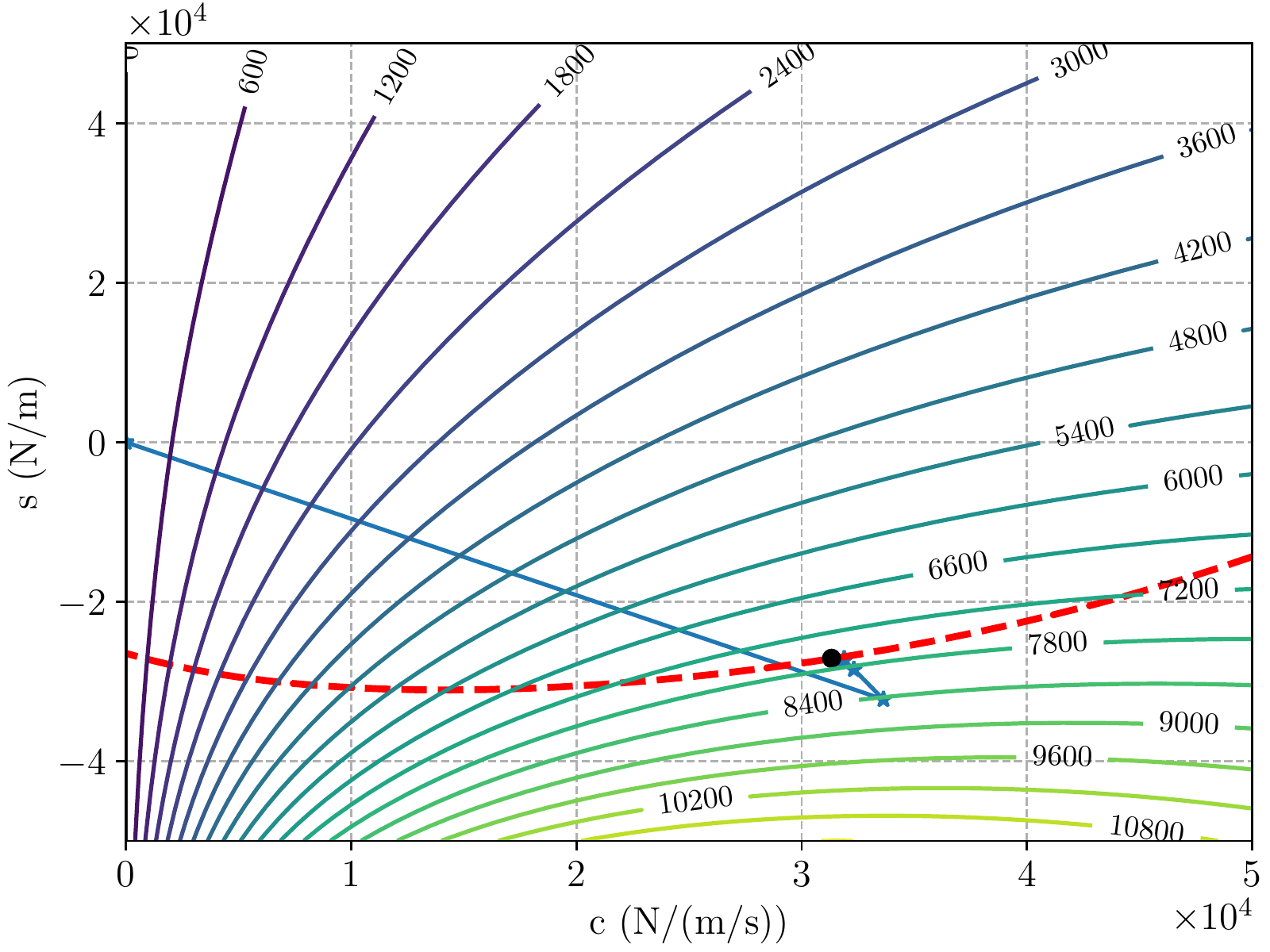}}
\hfill
\subfloat{\includegraphics[scale=0.44]{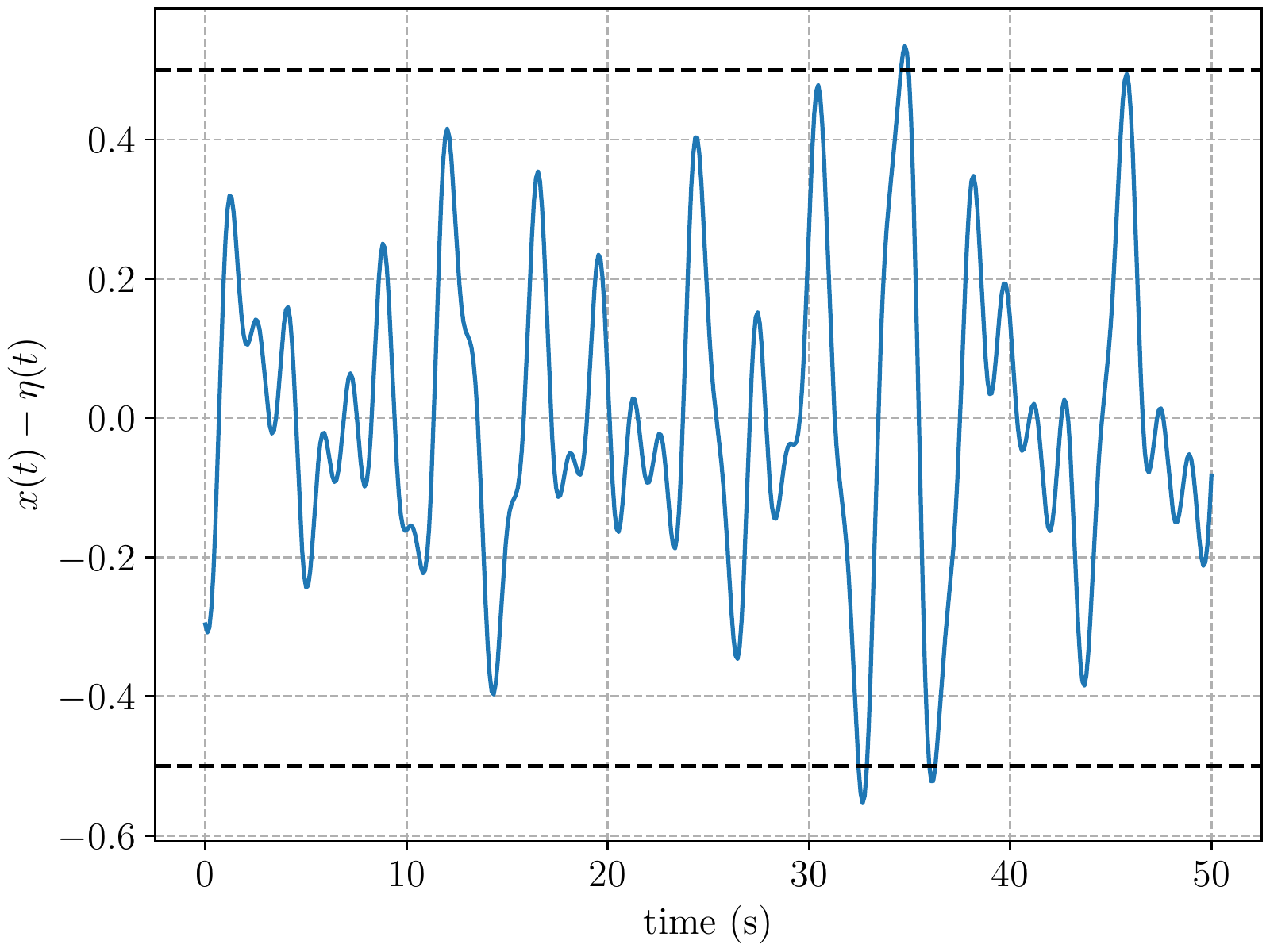}}
\caption{Left: Convergence plot (outer iterations) for the isolated body of case 1 superimposed on power contour map (values in W). Red dashed line: slamming constraint. Right: Time domain realization.}
\label{fig:1cconvmap}
\end{figure}


\subsection{Two-body optimization}

Table \ref{tab:2bodtest} shows the results of numerical experiments with 2 bodies of type 1. Configuration $\theta_0=0\degree$ corresponds to bodies aligned along the mean wave vector;  $\theta_0=90\degree$ corresponds to bodies aligned perpendicular to the mean wave vector. In both cases, the resulting interaction factor $q$ is larger when the spreading in wave direction is small, that is when $\beta$ is large (see  \eqref{eq:donelan}). In most cases, the slamming constraint is violated for the initial guess. This results in a reduction of power between the initial guess and the optimized result, the latter being compatible with the constraint up to the tolerance. 
The results for control parameters in the symmetric case $\theta_0=90\degree$ are not exactly symmetric: this is due to the use of a stochastic optimization method. The asymmetry may be reduced by increasing the window size used for averaging the iterates.

\begin{table}[h!]
\centering
\begin{tabular}{@{}cccccccc@{}}
\toprule
       & $\beta$ & $\theta_0$ & Initial violation & $\Delta P$ ($\%$) & $q$   & $c$ (N$\cdot$s/m)                                                         & $s$ (N/m)                                                           \\ \midrule
Test 1 & 20      & 0          & yes               & -2.65                & 0.949 & \begin{tabular}[c]{@{}c@{}}32095.1\\ 31140.3\end{tabular}   & \begin{tabular}[c]{@{}c@{}}-27384.7\\ -23636.9\end{tabular}   \\ \midrule
Test 2 & 5       & 0          & yes               & -2.59                & 0.947 & \begin{tabular}[c]{@{}c@{}}32156.5 \\ 31188.2\end{tabular}  & \begin{tabular}[c]{@{}c@{}}-27379.7  \\ -23701.0\end{tabular} \\ \midrule
Test 3 & 1       & 0          & yes               & -2.14                & 0.943 & \begin{tabular}[c]{@{}c@{}}32698.3\\ 32243.6\end{tabular}   & \begin{tabular}[c]{@{}c@{}}-27207.6 \\ -23555.7\end{tabular}  \\ \midrule
Test 4 & 20      & 90         & no                & +5.03                & 1.015 & \begin{tabular}[c]{@{}c@{}}33122.7\\ 33121.3\end{tabular}   & \begin{tabular}[c]{@{}c@{}}-29660.8\\ -29660.6\end{tabular}   \\ \midrule
Test 5 & 5       & 90         & no                & +4.59                & 1.009 & \begin{tabular}[c]{@{}c@{}}33306.1  \\ 33365.8\end{tabular} & \begin{tabular}[c]{@{}c@{}}-29258.7\\ -29280.5\end{tabular}   \\ \midrule
Test 6 & 1       & 90         & yes               & -2.27                & 0.940 & \begin{tabular}[c]{@{}c@{}}33353.7  \\ 33192.1\end{tabular} & \begin{tabular}[c]{@{}c@{}}-25061.1 \\ -24690.0\end{tabular}  \\ \bottomrule
\end{tabular}
\caption{Tests for 2 bodies, case 1}
\label{tab:2bodtest}
\end{table}

\begin{table}[h!]
\centering
\begin{tabular}{@{}cccccccc@{}}
\toprule
       & $\beta$ & $\theta_0$ & Initial violation & $\Delta P$ ($\%$) & $q$   & $c$ (N$\cdot$s/m)                                                         & $s$ (N/m)                                                           \\ \midrule
Test 1 & 20      & 0          & no               & +2.41                & 0.675 & \begin{tabular}[c]{@{}c@{}}689151 \\  728208\end{tabular}   & \begin{tabular}[c]{@{}c@{}}1104329 \\ 604768\end{tabular}   \\ \midrule
Test 2 & 5       & 0          & no               & +2.63                & 0.675 & \begin{tabular}[c]{@{}c@{}} 671380 \\  716425 \end{tabular}  & \begin{tabular}[c]{@{}c@{}}1124916 \\  559310\end{tabular} \\ \midrule
Test 3 & 1       & 0          & no               & +2.20                & 0.893 & \begin{tabular}[c]{@{}c@{}}659638 \\ 783297\end{tabular}   & \begin{tabular}[c]{@{}c@{}}1490803 \\ 1196127\end{tabular}  \\ \midrule
Test 4 & 20      & 90         & no                & +3.34               & 0.870 & \begin{tabular}[c]{@{}c@{}}878379 \\ 876926 \end{tabular}   & \begin{tabular}[c]{@{}c@{}}708776 \\ 706616\end{tabular}   \\ \midrule
Test 5 & 5       & 90         & no                & +1.01                & 0.924 & \begin{tabular}[c]{@{}c@{}}869725  \\  869114 \end{tabular} & \begin{tabular}[c]{@{}c@{}}1035442 \\ 1044426\end{tabular}   \\ \midrule
Test 6 & 1       & 90         & no               & +3.26               & 1.037 & \begin{tabular}[c]{@{}c@{}}735201 \\ 737221\end{tabular} & \begin{tabular}[c]{@{}c@{}}1526562 \\ 1528286\end{tabular}  \\ \bottomrule
\end{tabular}
\caption{Tests for 2 bodies, case 2}
\label{tab:2bodtestc2}
\end{table}

Figure \ref{fig:2bodtest1conv} reports the convergence history for test 1. The two jumps correspond to a switch to a higher value of the penalization parameter. After a jump, the cost starts decreasing again, but stops at a larger value than the one before the jump, which would now be unattainable because of the stronger enforcement of the constraint.

\begin{figure}[h!]
\centering
\includegraphics[scale=0.5]{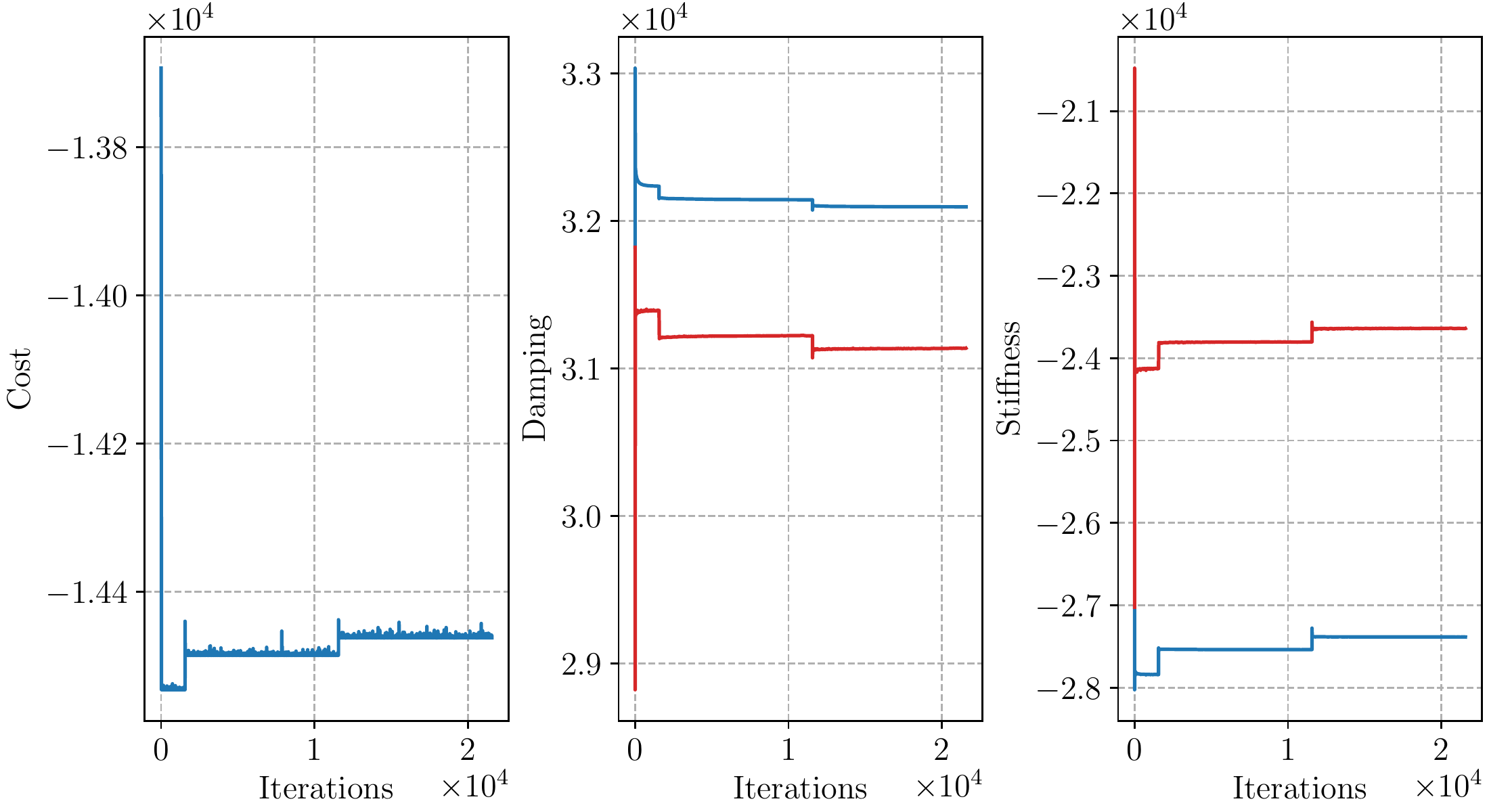}
\caption{Convergence plot for test 1 (see Table \ref{tab:2bodtest})}
\label{fig:2bodtest1conv}
\end{figure}

\subsection{Multi-body optimization: devices on two concentric arcs}
We now consider an array of 15 devices in an arrangement with 2 concentric arcs, derived from the ones considered in \cite{Goeteman2014}, for case 1. 
The geometry of the array and the results without constraints on the sign of the stiffness are shown in Fig. \ref{fig:15cs1pow} and \ref{fig:15cs1cont}. For all the following tests, the dominant wave direction is zero: the incident wave vector is aligned with the $x$ axis. We observe that the bodies with the smallest stiffnesses are the ones located in the downwave arc, at the center. They are thus the bodies with the lowest tuning frequency, and they receive the largest power increase. Conversely, the devices at the outer edges of the array have the largest stiffnesses and they are subject to a moderate power decrease. Regarding dampings, bodies in the upwave arc have larger values than the ones in the downwave arc, corresponding to wider response peaks: the bodies at the front, roughly speaking, are tuned so that they are able to absorbe power in a wider range of frequencies. 
The global result is a power increase of $3.44 \%$ compared to the initial guess, and an interaction factor of 0.966.

\begin{figure}[h!]
\centering
\includegraphics[scale=0.5]{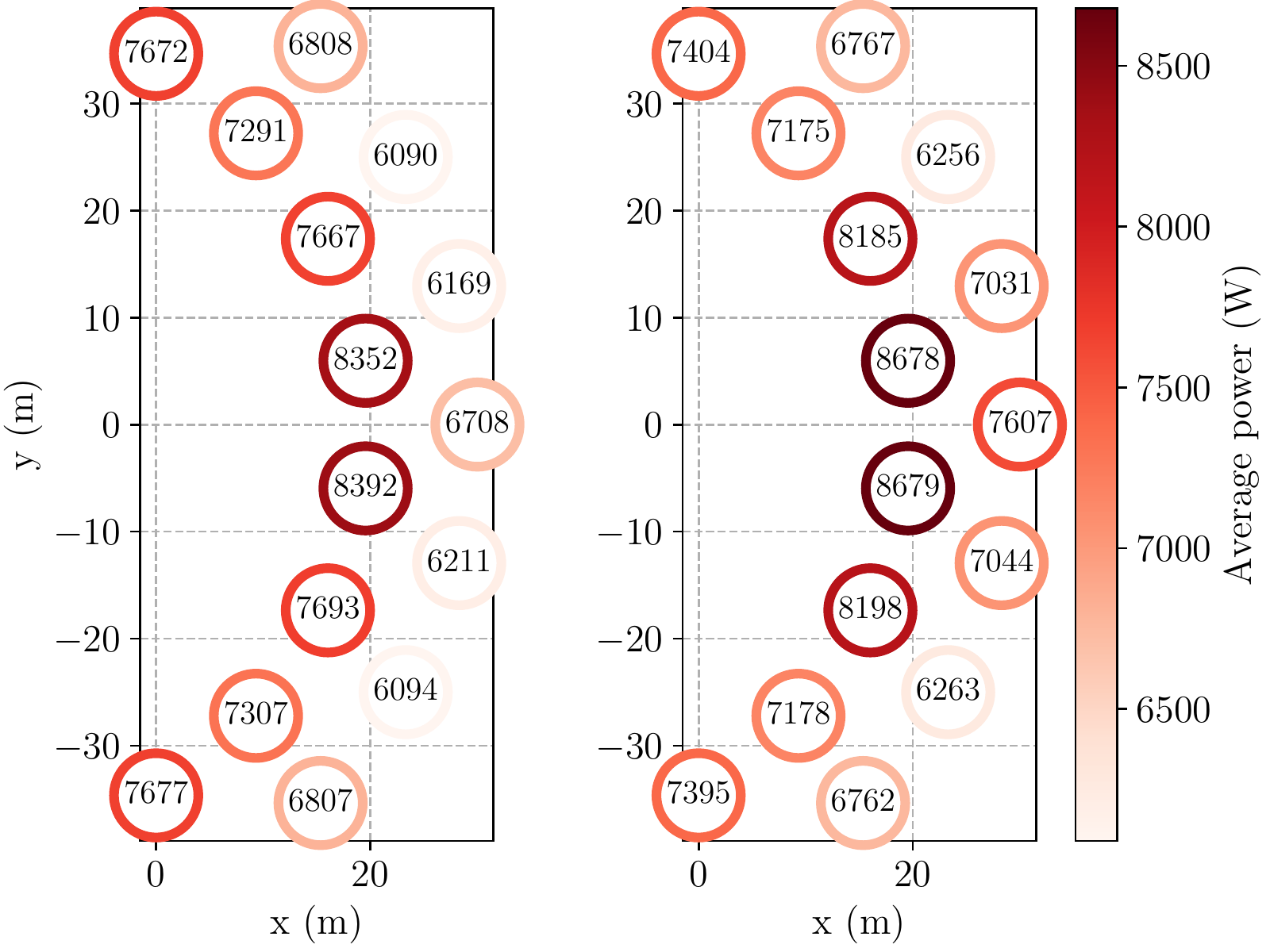}
\caption{Map of power before (left) and after (right) array optimization; case 1, 15 devices unconstrained stiffness. Bodies not to scale (radius is 2.5 m)}
\label{fig:15cs1pow}
\end{figure}

\begin{figure}[h!]
\centering
\subfloat{\includegraphics[scale=0.45]{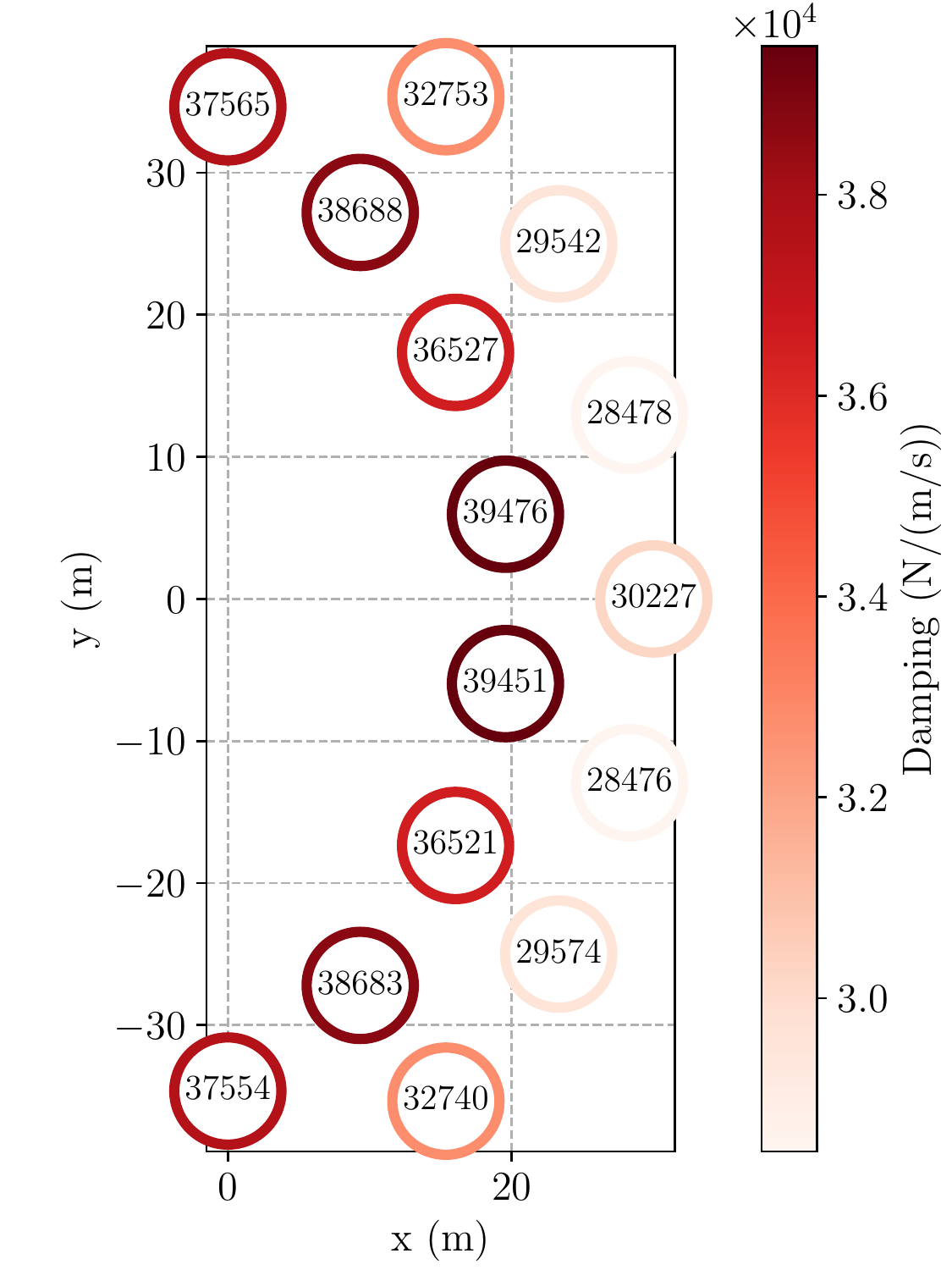}}
\subfloat{\includegraphics[scale=0.45]{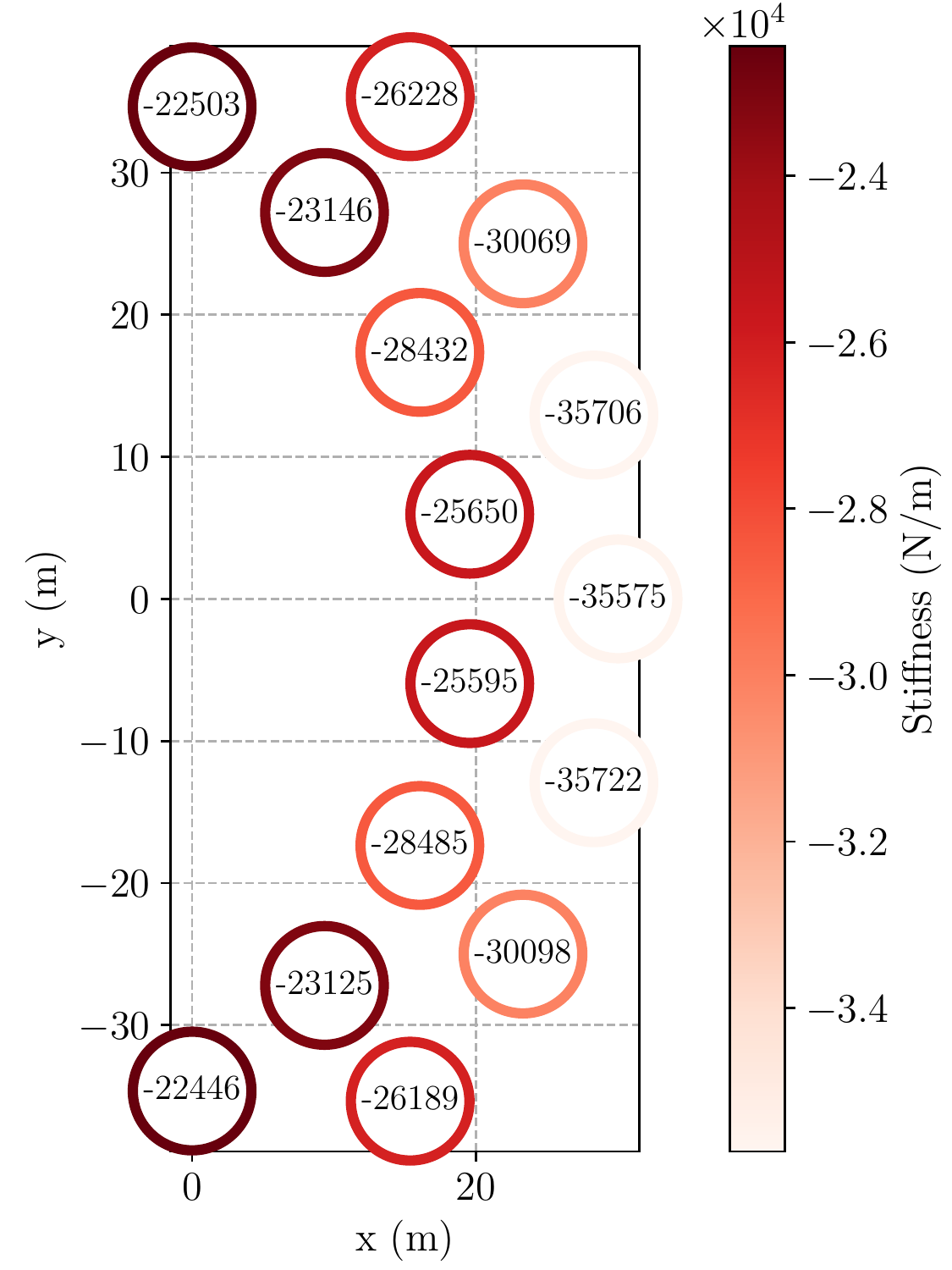}}
\caption{Map of control parameters; case 1, 15 devices, unconstrained stiffness}
\label{fig:15cs1cont}
\end{figure}

If the constraint of positive stiffnesses is enforced, the results of Fig. \ref{fig:15cs1proj} are obtained. Only a very small power increase of $0.31\%$ is obtained, and an interaction factor of $0.980$. Contrary to the previous case, bodies in the downwave arc have larger control dampings than the ones in the upwave arc.  All optimal control stiffnesses are zero; this is explained by the large, positive frequency distance between the response peaks of the uncontrolled bodies and the frequencies where most of wave power is distributed.

\begin{figure}[h!]
\centering
\subfloat{\includegraphics[scale=0.45]{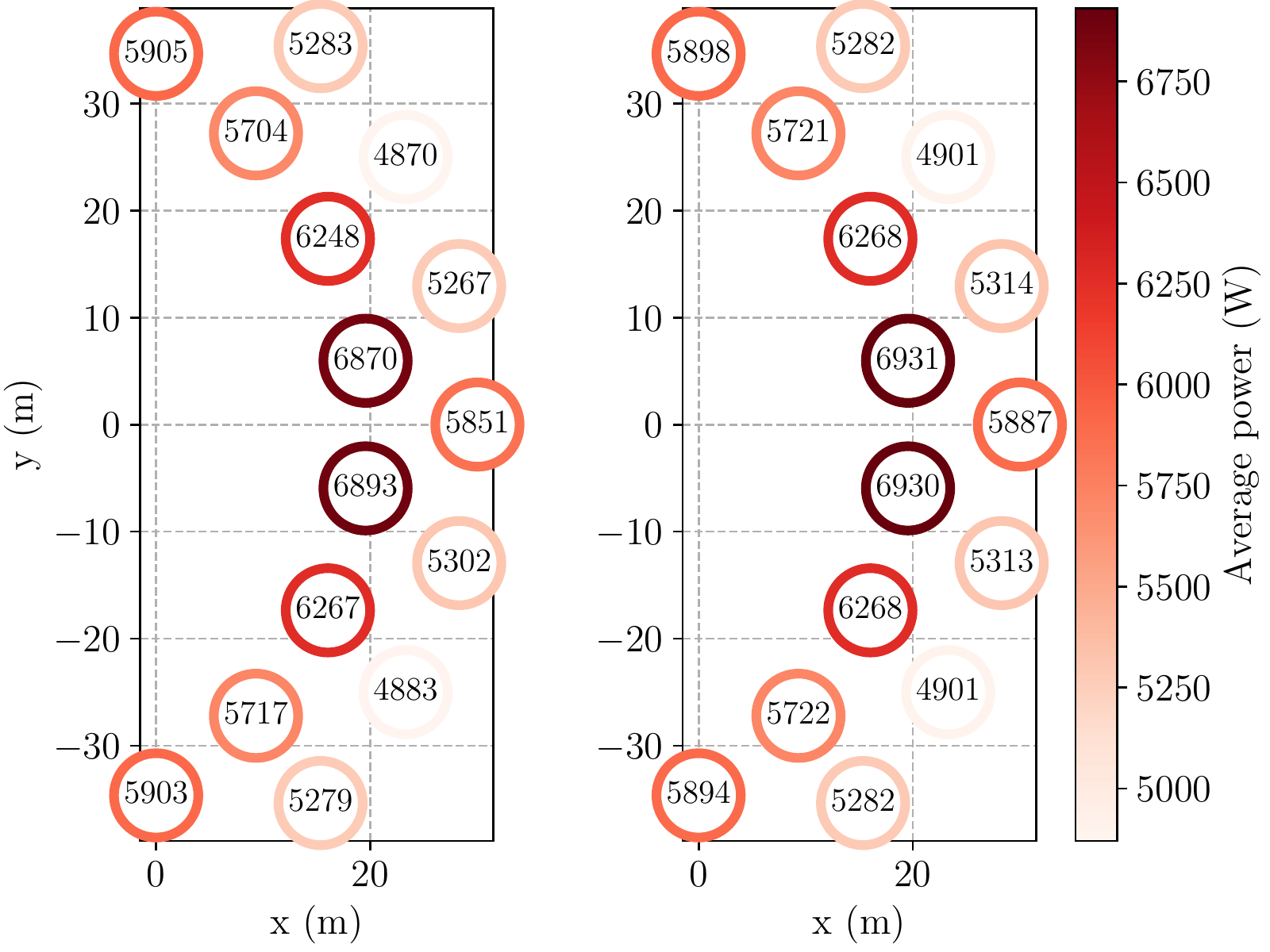}}
\hspace{0.5cm}
\subfloat{\includegraphics[scale=0.45]{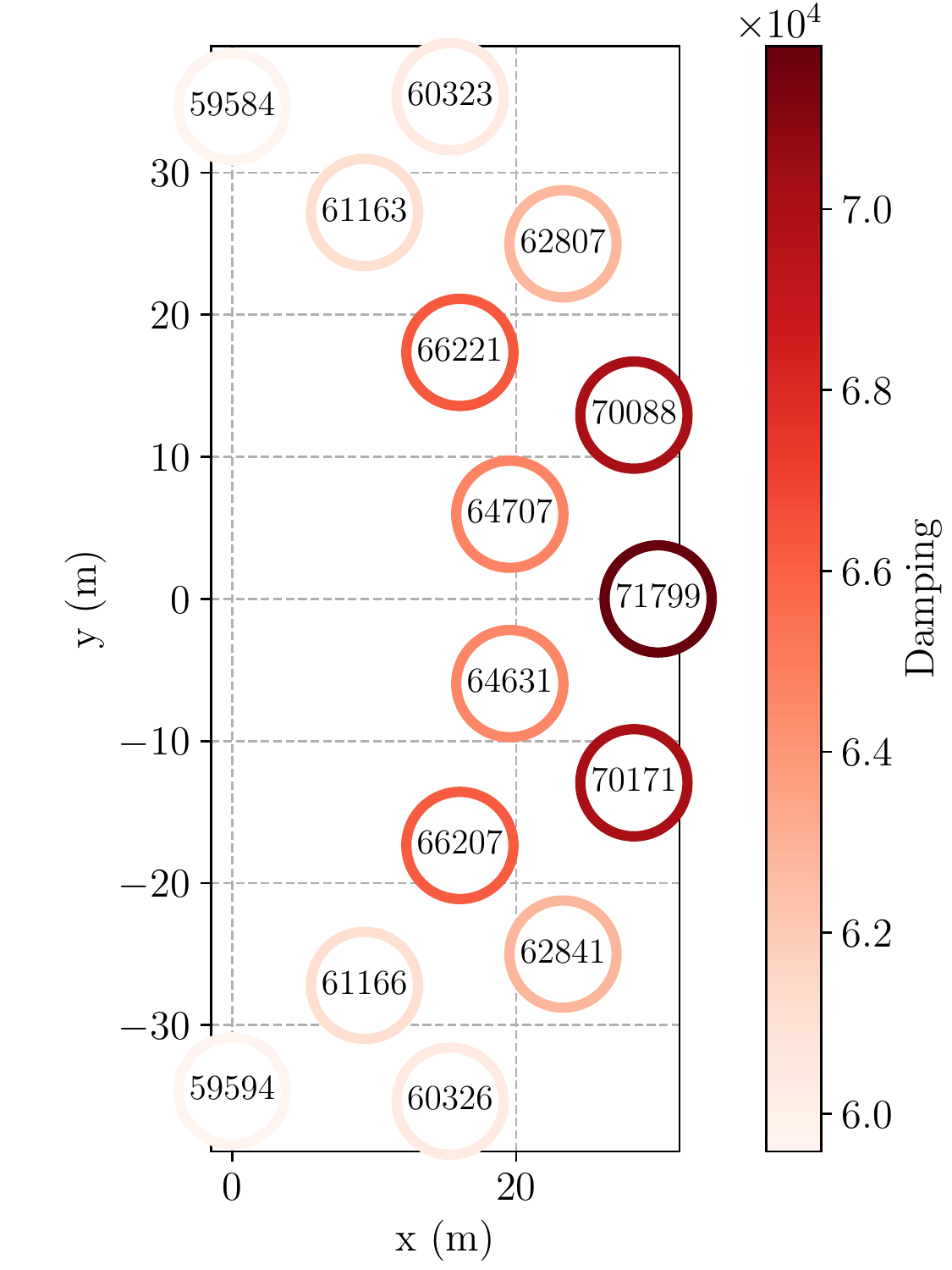}}
\caption{Maps of power (left) and control damping (right); case 1, 15 devices, positive stiffness constraint}
\label{fig:15cs1proj}
\end{figure}

For the same kind of geometry, with bodies of type 2, the results are reported in Figures \ref{fig:15cs2pow} and \ref{fig:15cs2cont}. All optimal stiffnesses are positive, without imposing any constraint. As in the corresponding arrangement for case 1, bodies in the upwave arc have generally larger stiffnesses than the ones in the downwave arc, but in this case there is the exception of the body located at $y=0$ in the downwave row. There is no evident behaviour for dampings between the two arcs. The power increase obtained by optimization is of $3.99\%$; the interaction factor is 1.144. Notice that the slamming constraint is never active, due to the relatively large draft of the bodies. Due to this, positive interactions in the array are promoted and they are not limited by the constraint.

\begin{figure}[h!]
\centering
\includegraphics[scale=0.5]{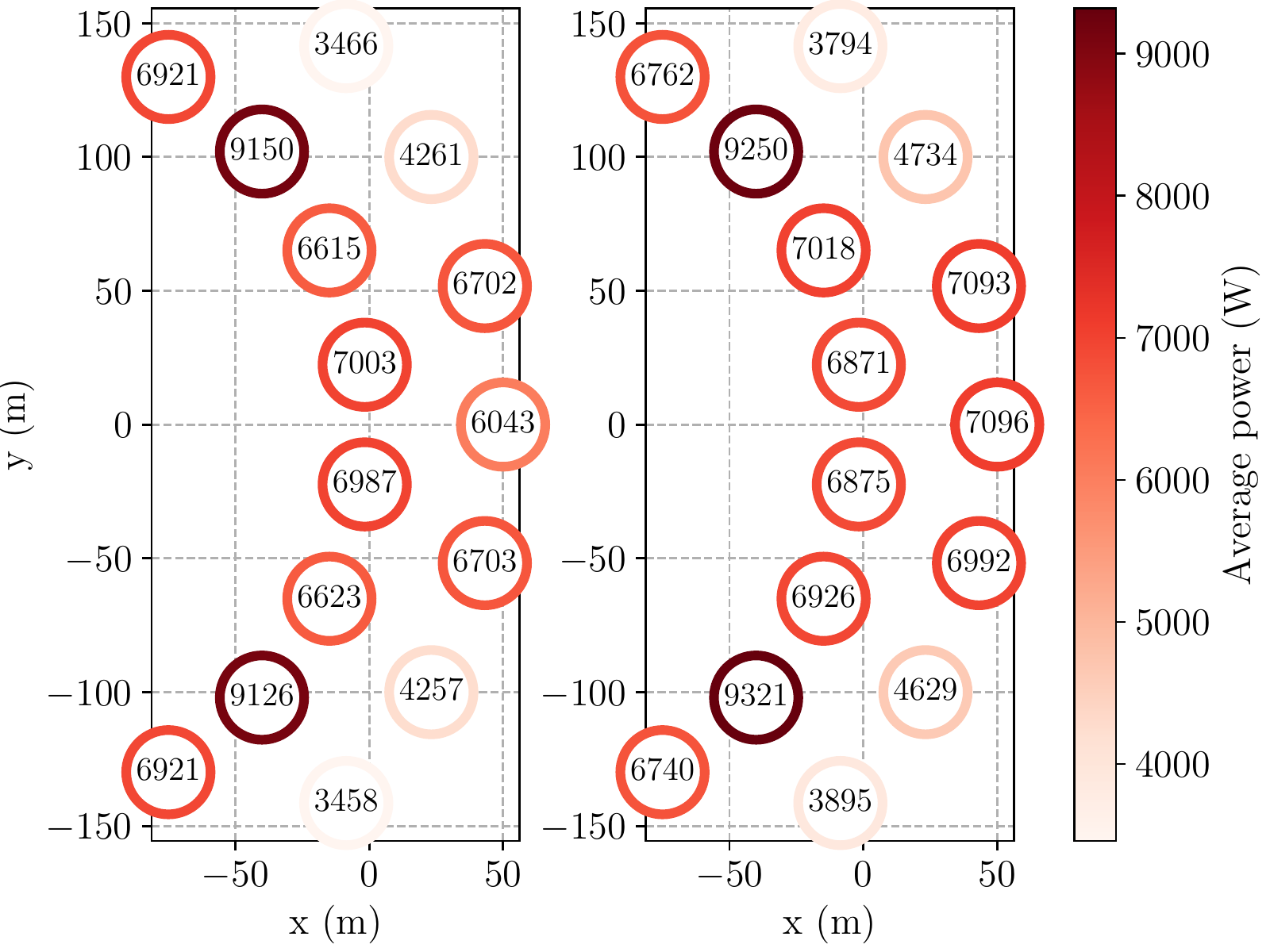}
\caption{Map of power before (left) and after (right) array optimization; case 2, 15 devices, unconstrained stiffness}
\label{fig:15cs2pow}
\end{figure}

\begin{figure}[h!]
\centering
\subfloat{\includegraphics[scale=0.45]{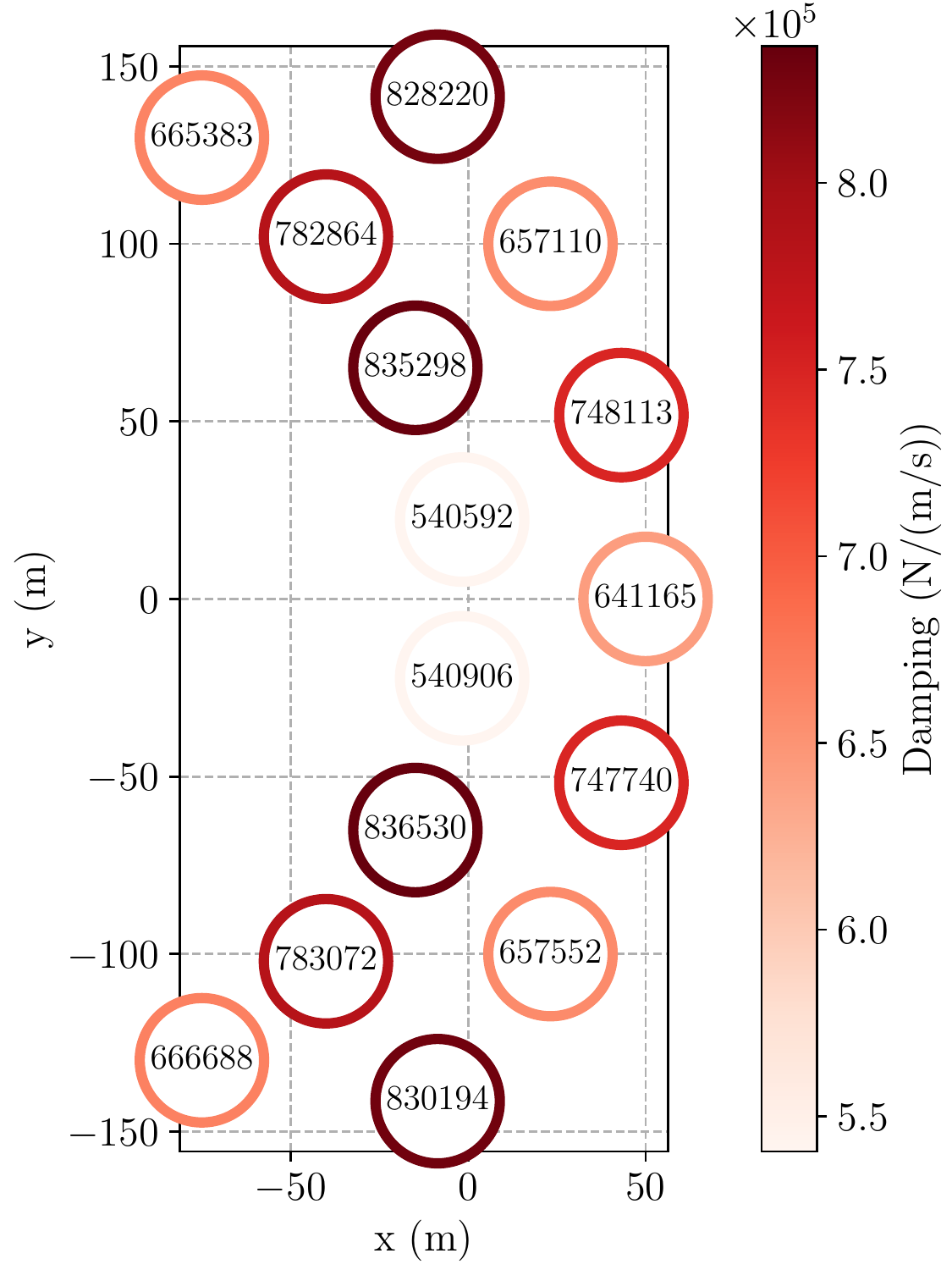}}
\subfloat{\includegraphics[scale=0.45]{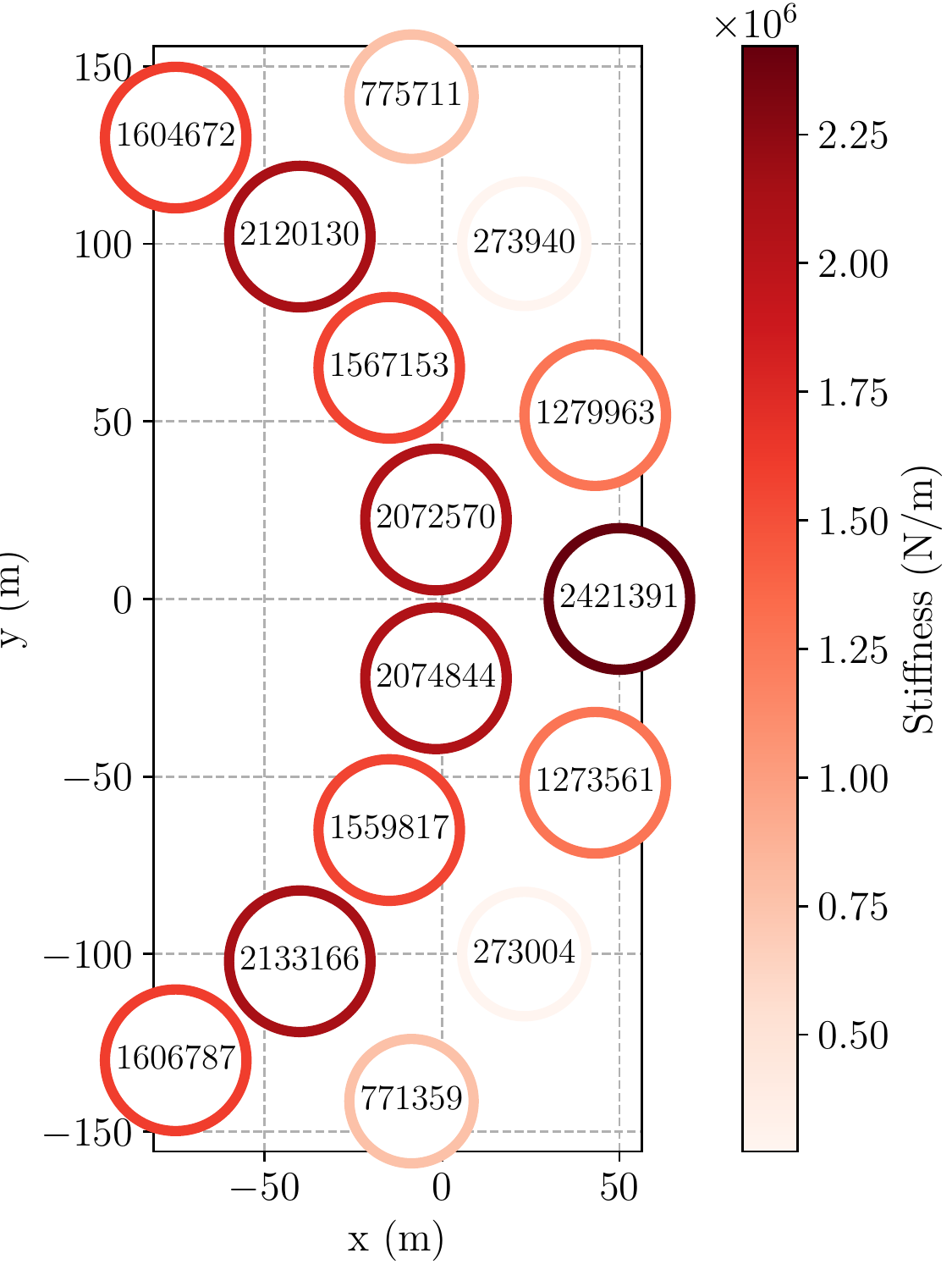}}
\caption{Map of control parameters; case 2, 15 devices, unconstrained stiffness}
\label{fig:15cs2cont}
\end{figure}

\subsection{Multi-body optimization: $4\times 4$ square arrangement}
Figures \ref{fig:16cs1pow} and \ref{fig:16cs1cont} refer to the case of a square grid of 16 bodies. For case 1, without the constraint of positive stiffnesses, we obtain all negative stiffness, the smallest ones being located in the most downwave rows. Maximum dampings are assigned to the devices at the front and center of the array. Bodies in the upwave rows are subject to a power increase compared to the initial guess; in the downwave rows, devices at the edges see a moderate power increase, while devices at the center see a power decrease, that is particularly intense for the bodies in the last row. The global power increase is of $3.11\%$, and the interaction factor is 0.766.

\begin{figure}[h!]
\centering
\includegraphics[scale=0.5]{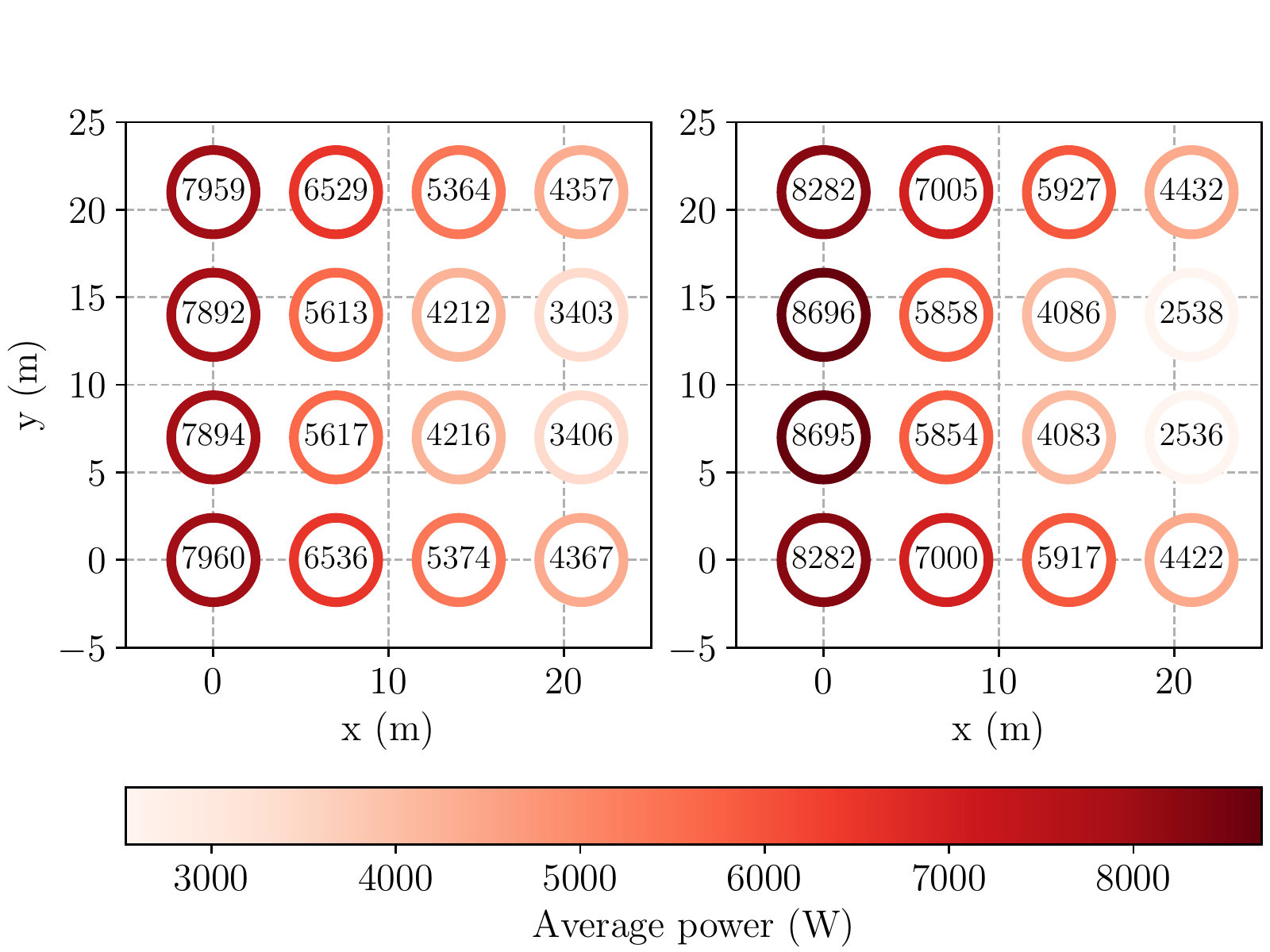}
\caption{Map of power before (left) and after (right) array optimization; case 1, 16 devices, unconstrained stiffness}
\label{fig:16cs1pow}
\end{figure}

\begin{figure}[h!]
\centering
\subfloat{\includegraphics[scale=0.45]{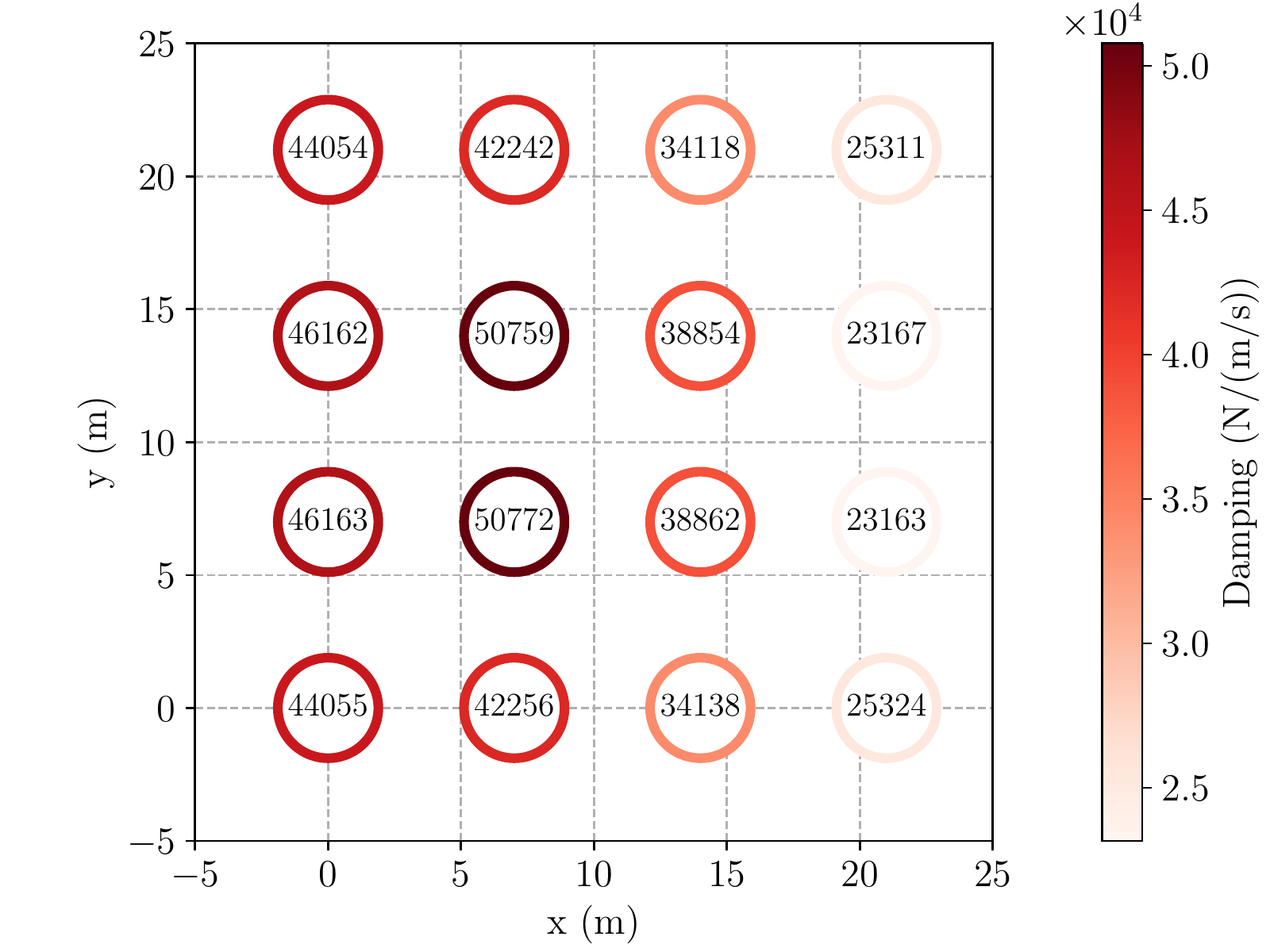}}
\subfloat{\includegraphics[scale=0.45]{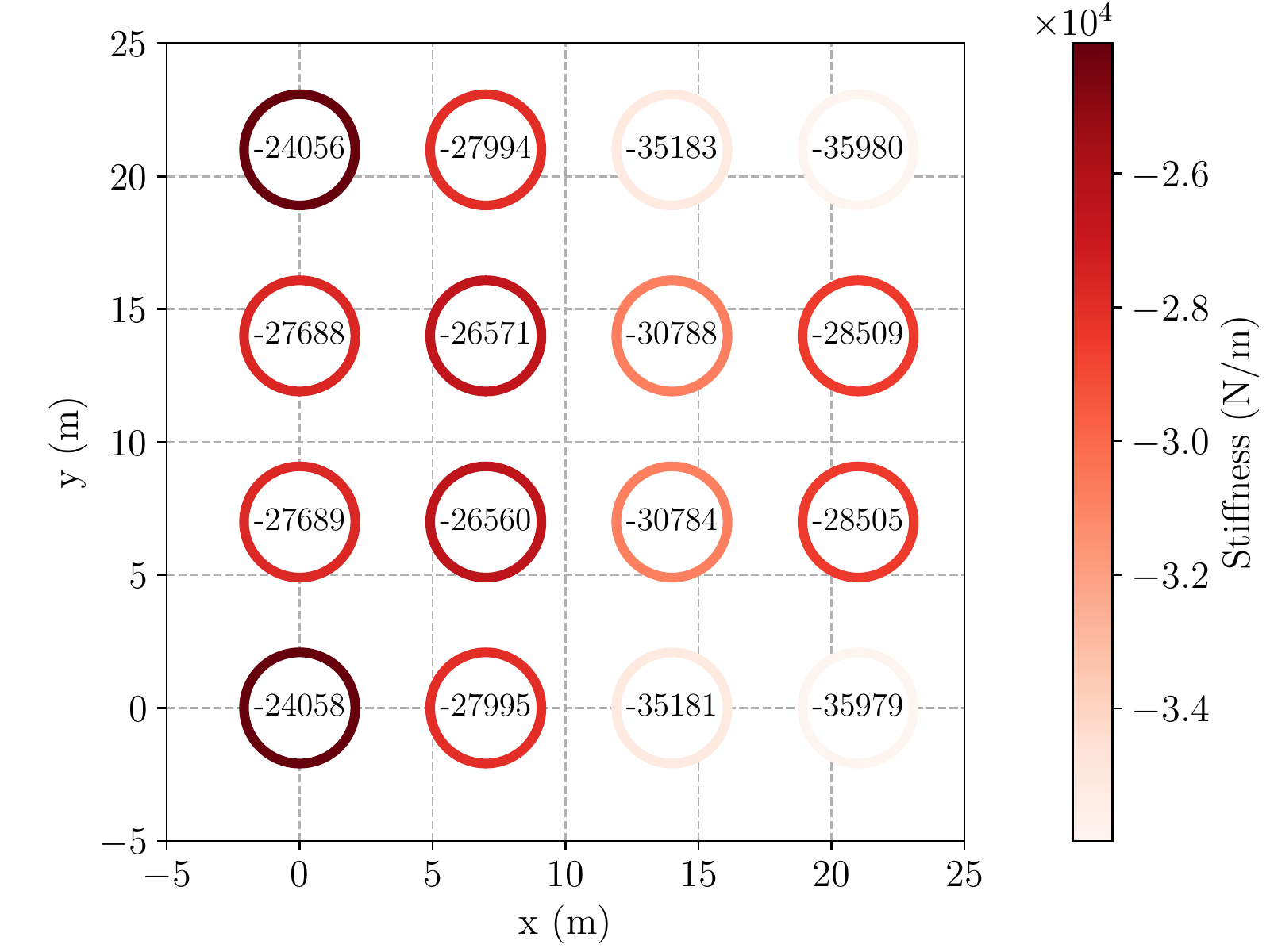}}
\caption{Map of control parameters; case 1, 16 devices, unconstrained stiffness}
\label{fig:16cs1cont}
\end{figure}

If stiffnesses are constrained to positive values, the optimization drives them all to zero. Powers and control dampings are shown in Figure \ref{fig:16cs1proj}. This time, the largest power increases are for the devices at the center of the array, which are also the ones with the largest control stiffnesses.
The power increase is of $0.41 \%$, and the obtained interaction factor is 0.816.

\begin{figure}[h!]
\centering
\subfloat{\includegraphics[scale=0.45]{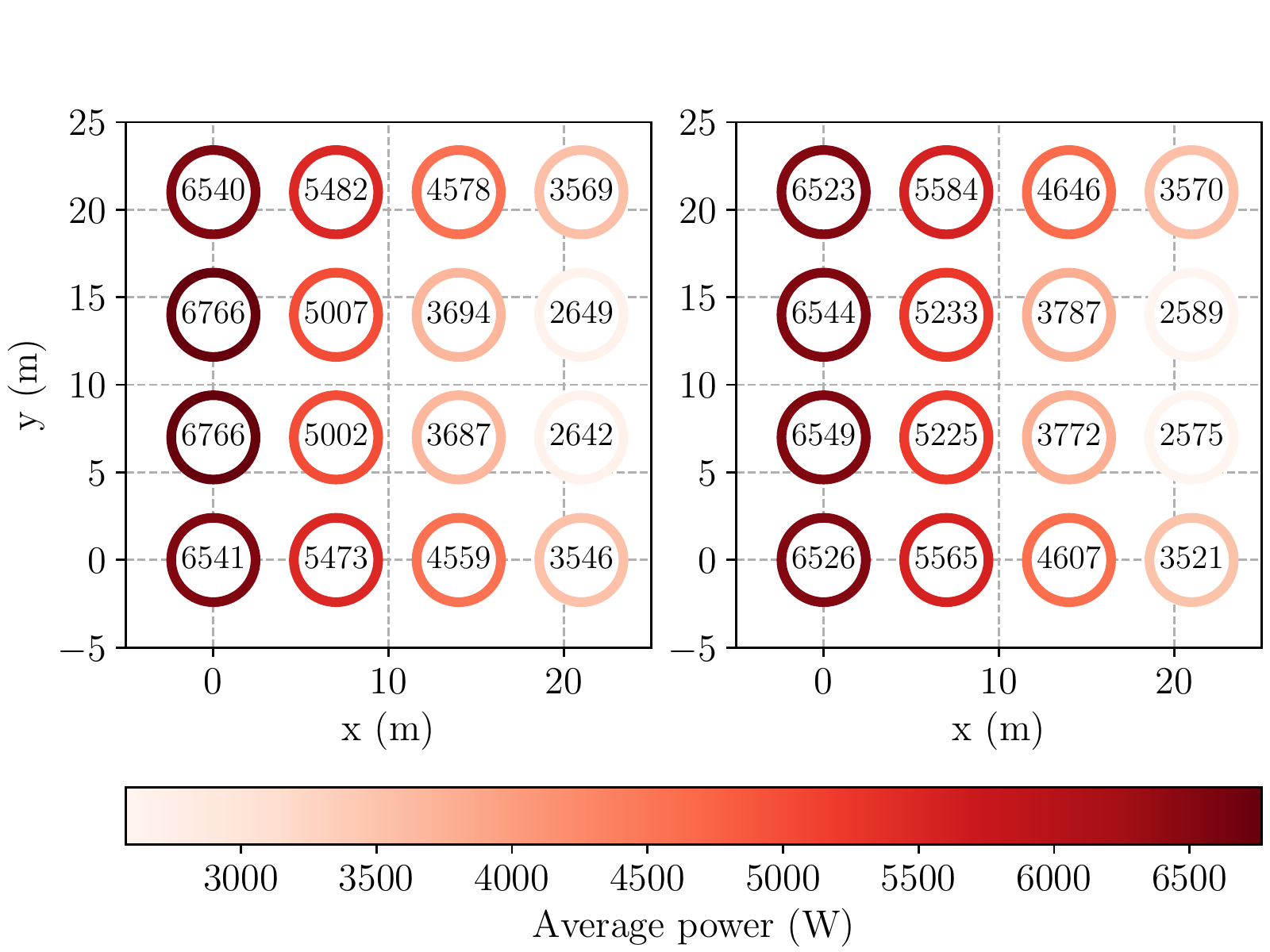}}
\subfloat{\includegraphics[scale=0.45]{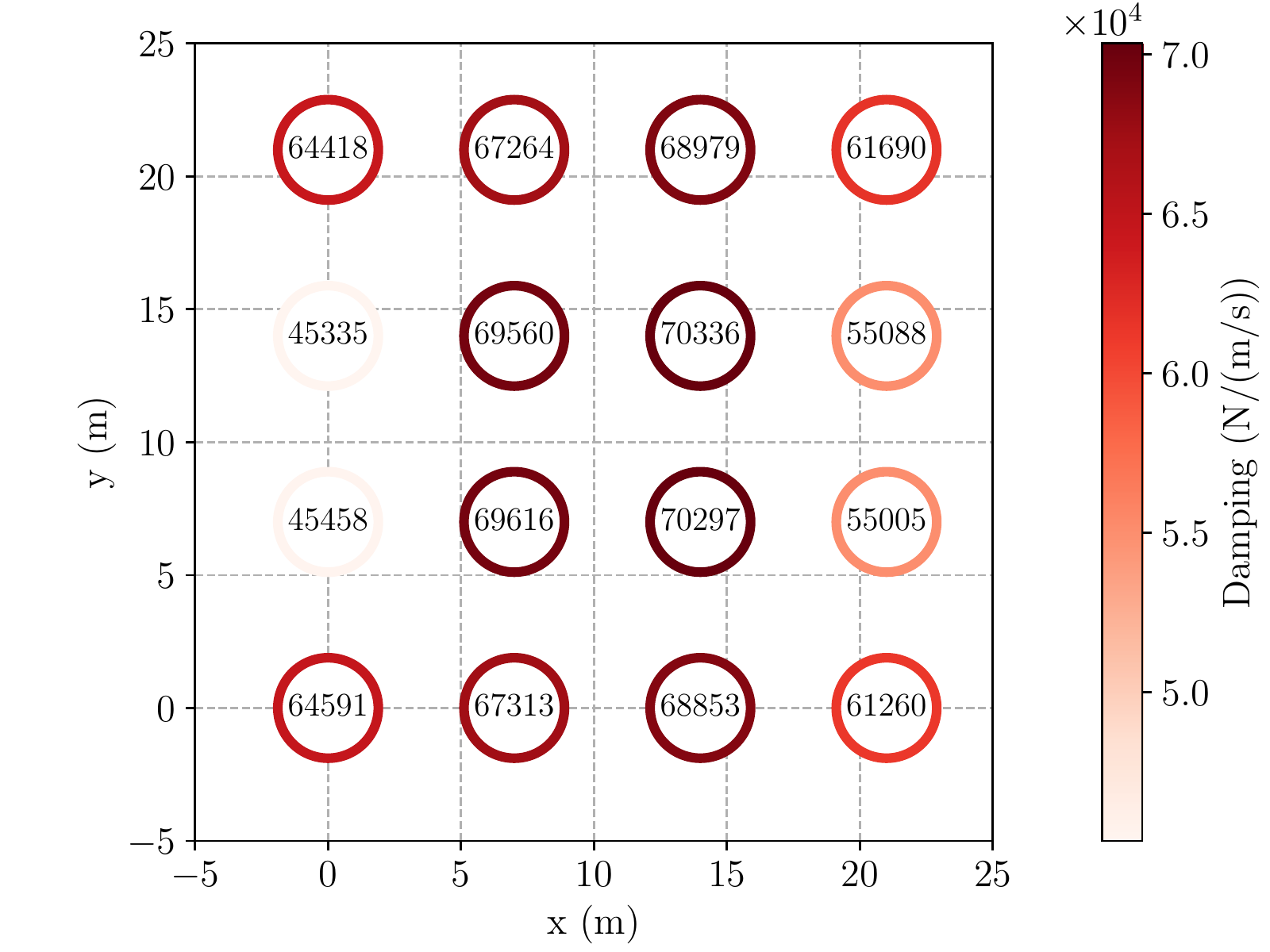}}
\caption{Maps of power (left) and control damping (right); case 1, 16 devices, positive stiffness constraint}
\label{fig:16cs1proj}
\end{figure}

A $4\times 4$ square arrangement has also been simulated for the case 2 considered in the previous section. The results are shown in Figures \ref{fig:16cs2pow} and \ref{fig:16cs2cont}. Devices in the most downwave row are subject to a power reduction, while all the others see a power increase. Bodies in the third row are assigned negative stiffnesses. The global power is increased by $9.16\%$ and the interaction factor is 0.835.

\begin{figure}[h!]
\centering
\includegraphics[scale=0.5]{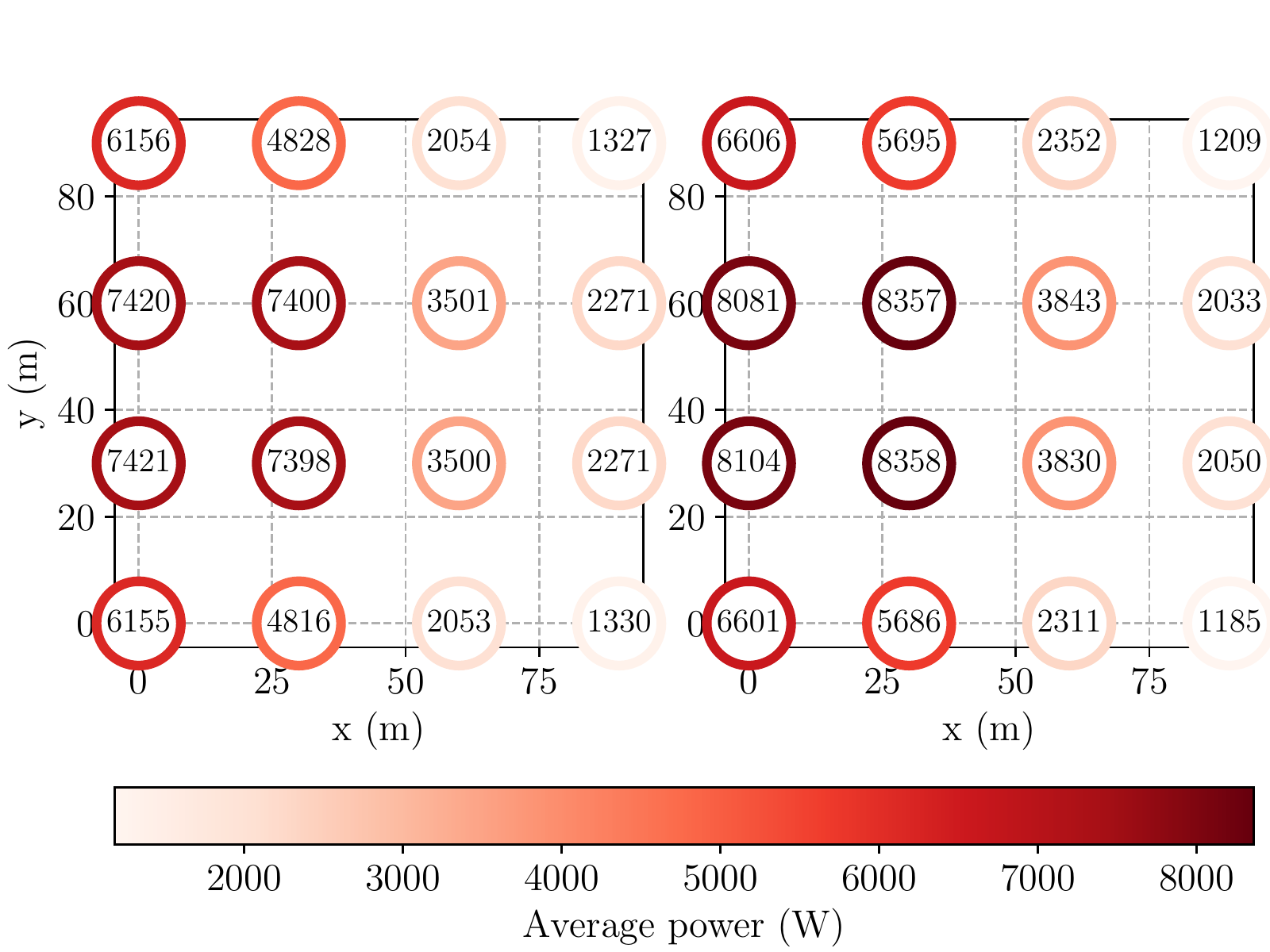}
\caption{Map of power before (left) and after (right) array optimization; case 2, 16 bodies, unconstrained stiffness}
\label{fig:16cs2pow}
\end{figure}

\begin{figure}[h!]
\centering
\subfloat{\includegraphics[scale=0.45]{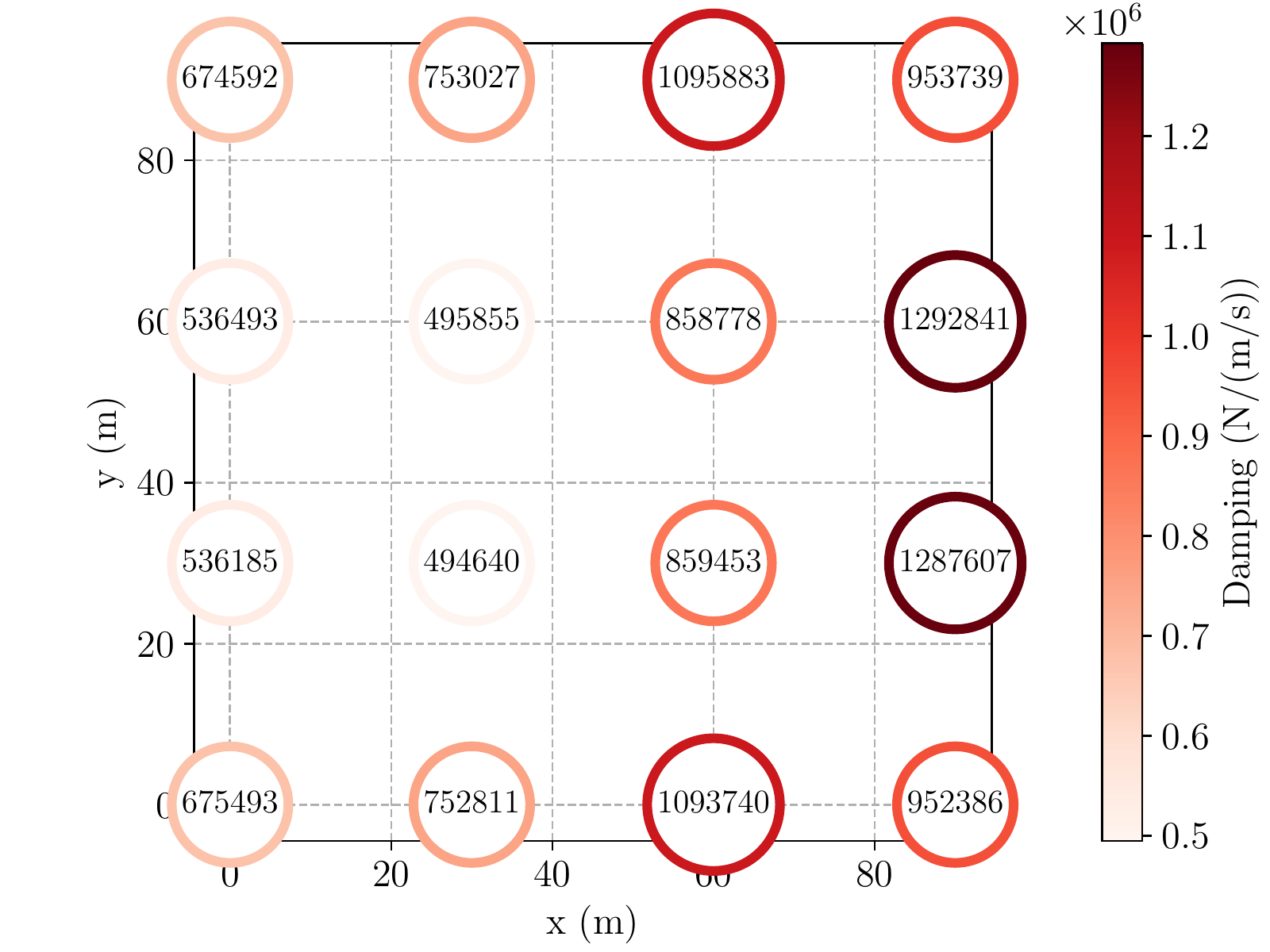}}
\subfloat{\includegraphics[scale=0.45]{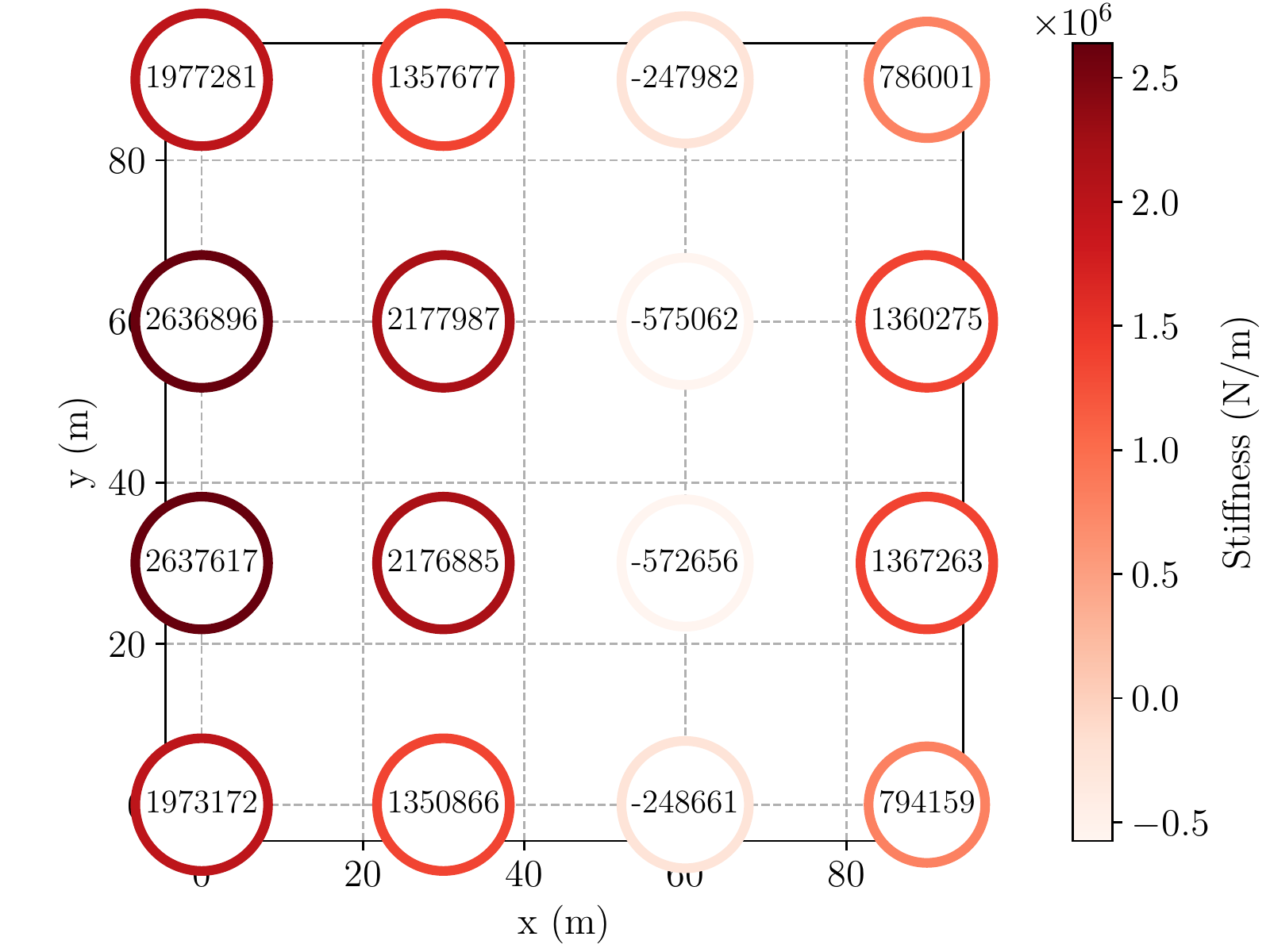}}
\caption{Map of control parameters; case 2, 16 devices, unconstrained stiffness}
\label{fig:16cs2cont}
\end{figure}

The process is repeated with the constraint of positive stiffnesses: see Fig. \ref{fig:16cs2projpow}, \ref{fig:16cs2projcont}. In terms of total power, the result is around 76.3 kW in both cases, but in this second one, almost all bodies in the downwave part of the array are assigned stiffnesses close to zero.

\begin{figure}[h!]
\centering
\includegraphics[scale=0.5]{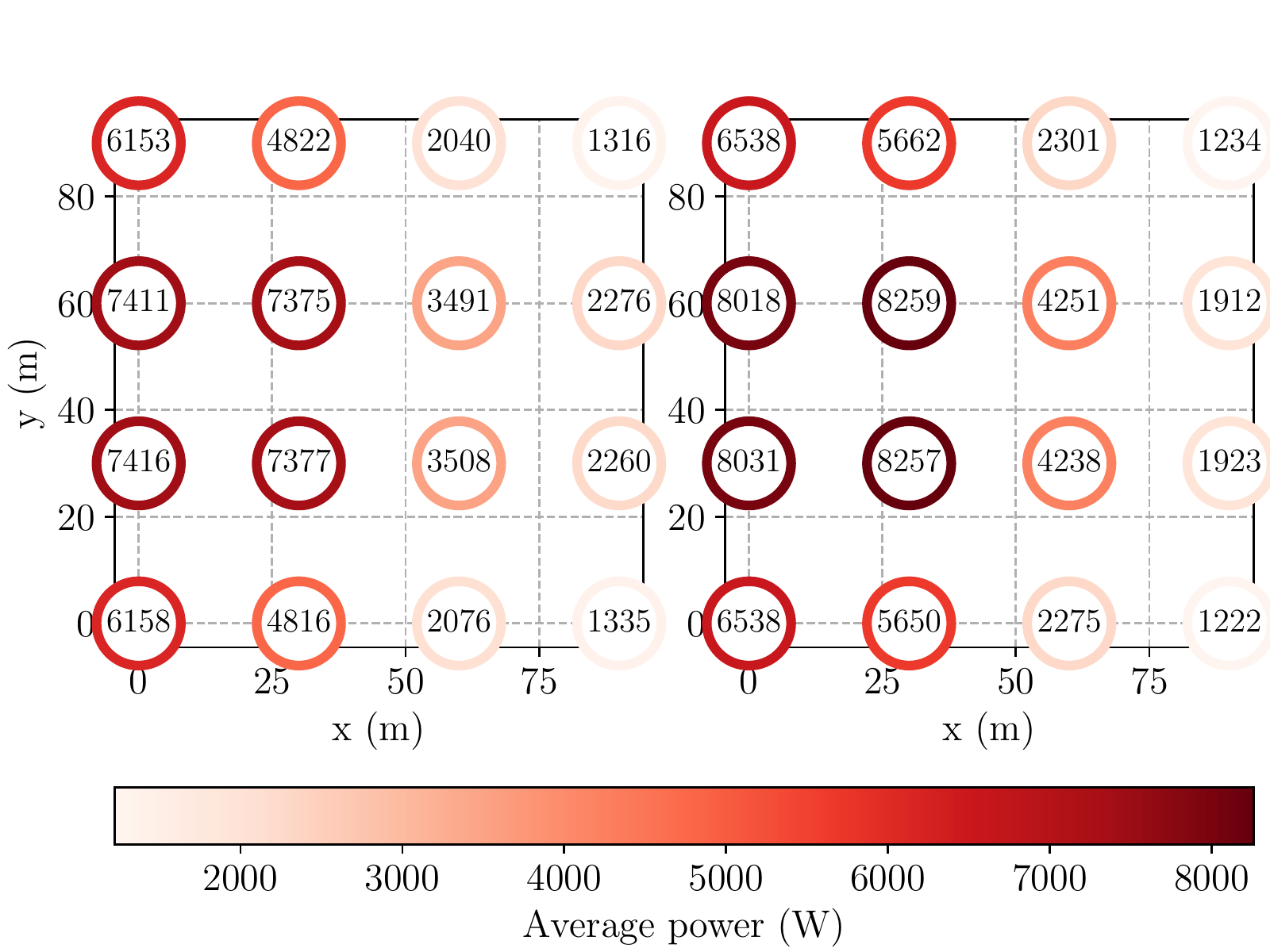}
\caption{Map of power before (left) and after (right) array optimization; case 2, 16 devices, positive stiffness constraint}
\label{fig:16cs2projpow}
\end{figure}

\begin{figure}[h!]
\centering
\subfloat{\includegraphics[scale=0.45]{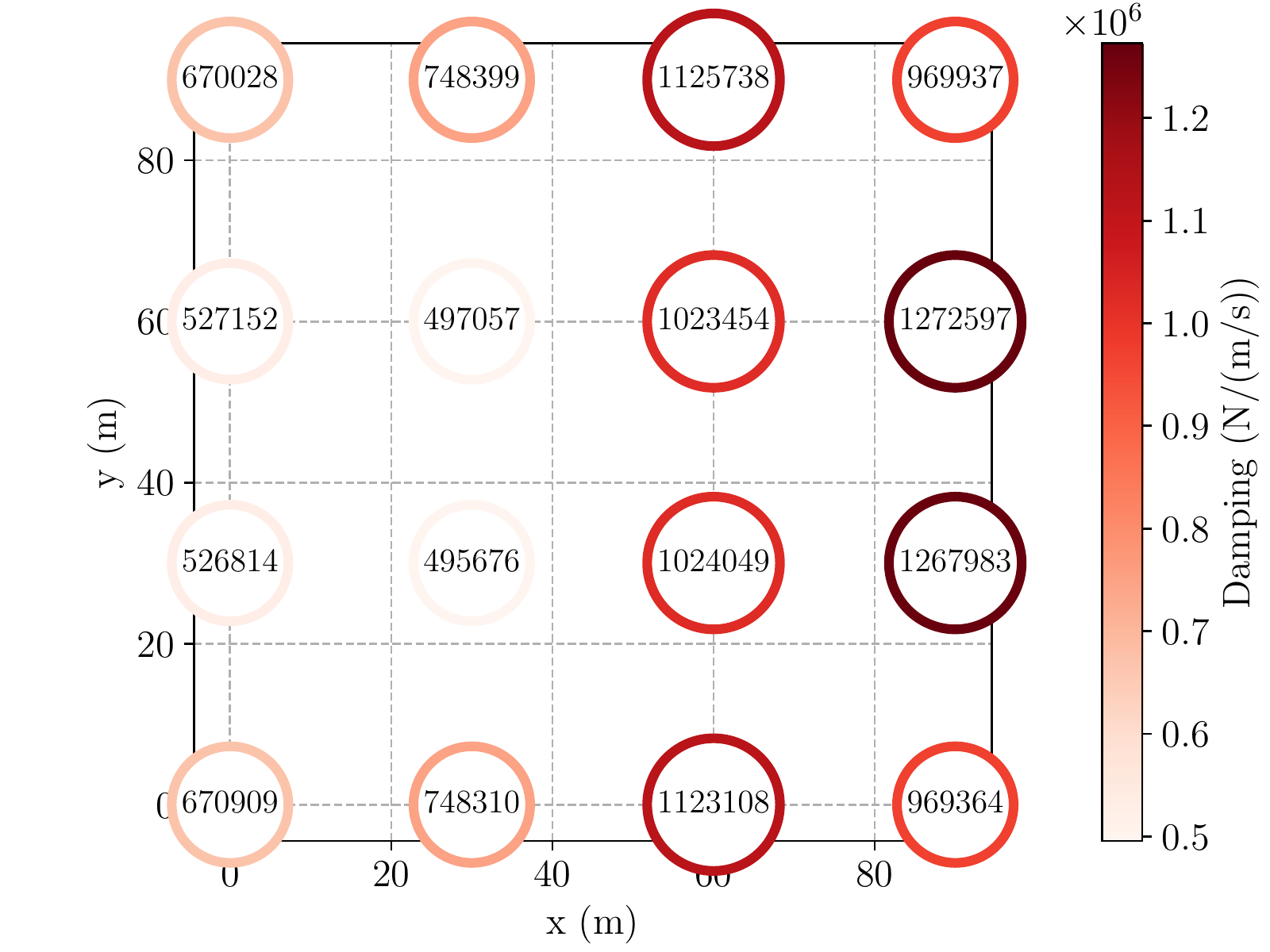}}
\subfloat{\includegraphics[scale=0.45]{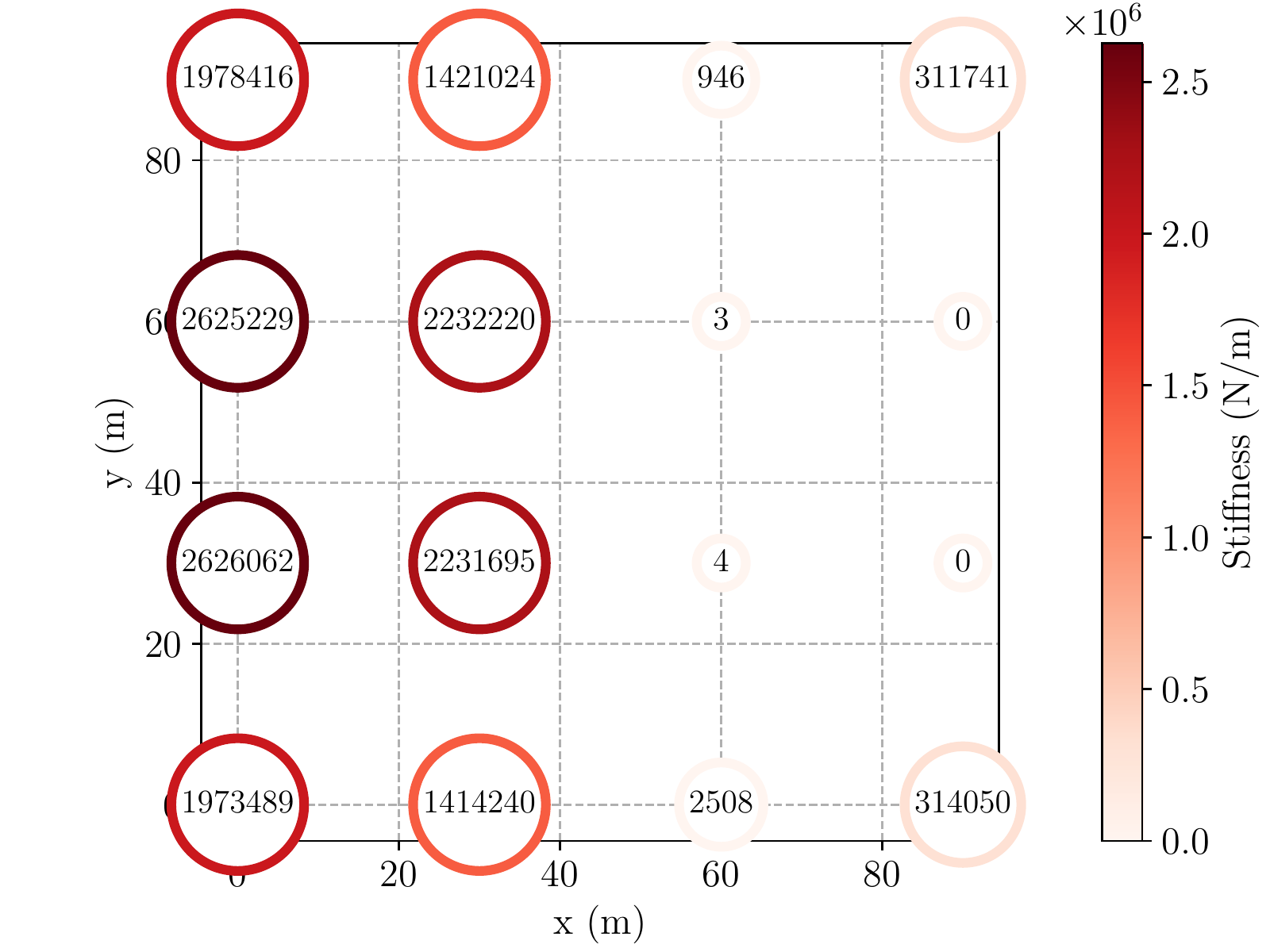}}
\caption{Map of control parameters; case 2, 16 devices, positive stiffness constraint}
\label{fig:16cs2projcont}
\end{figure}

\section{Extension to multiple random parameters}\label{sec:2peak}
The methods presented in the previous sections can be generalized to the case of multiple random variables. We present an application of practical interest involving two random parameters, based on a discussion presented in \cite{Boukhanovsky2009}. This allows us to discuss a case where our optimization framework computes an optimal solution that is, from a physical point of view, less intuitive than the ones presented in the previous sections.
Consider a sea state formed by a superposition of wind waves and swell waves. Wind waves are generated locally by the action of the wind on the free surface, while swell waves originate from storms occurring far from the measurement point. The frequency spectrum of such sea state typically presents two peaks: a relatively narrow, low-frequency peak corresponding to the swell component, and a relatively wide, high-frequency peak corresponding to the wind component. Assuming linearity of the interaction between the two wave systems, the spectrum can be written as $S(f, \theta) = S_w(f, \theta) + S_s(f, \theta)$, the subscripts $_w$ and $_s$ being referred to the wind contribution and the swell contribution, respectively. Consistently with the approximation adopted for the case of a single random direction, we write $S_w(f, \theta) = S_w(f)D_w(\theta)$, and the analogous expression for swell. The frequency part is given by the Pierson-Moskovitz spectrum \eqref{eq:PMspectrum}, while the directional part is given by the Donelan distribution function \eqref{eq:donelan}.

To formulate the robust optimization problem in this case, we assume that peak frequencies $f_{p,w}$, $f_{p,s}$ and significant wave heights $H_{s,w}$, $H_{s,s}$ are known and fixed values. We define a realization as the sum of two irregular waves, each one unidirectional with directions $\theta_w$ and $\theta_s$, sampled from $D_w(\theta)$ and $D_s(\theta)$, respectively. The excitation force acting on each device is, accordingly, the sum of the excitation forces due to the wind and swell contributions.

We present a case with the 15-device configuration of type 2 (see Tab.~\ref{tab:bodies}). The parameters of the wind component are $H_s = 2 \text{ m}$, $T_p = 5 \text{ s}$, $\beta=5$, $\theta_0=30\degree$. The parameters of the swell component are $H_s = 1 \text{ m}$, $T_p = 10 \text{ s}$, $\beta=20$, $\theta_0=0\degree$.
The corresponding spectrum is depicted in Fig.~\ref{fig:2peak-spectrum}.
\begin{figure}[h!]
    \centering
    \subfloat{
        \includegraphics[width=0.46\textwidth]{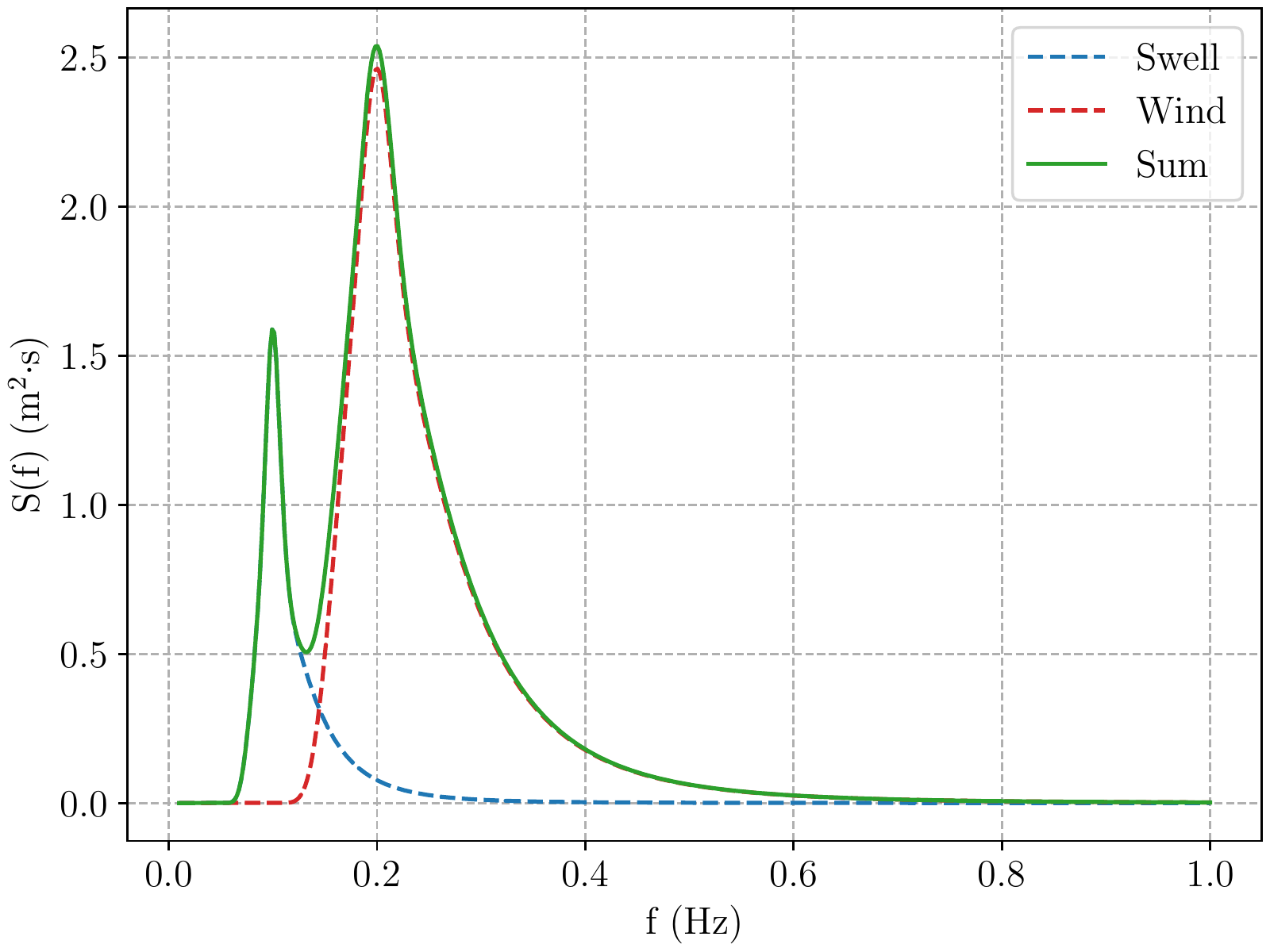}
    }
    \hfill
    \subfloat{
        \includegraphics[width=0.46\textwidth]{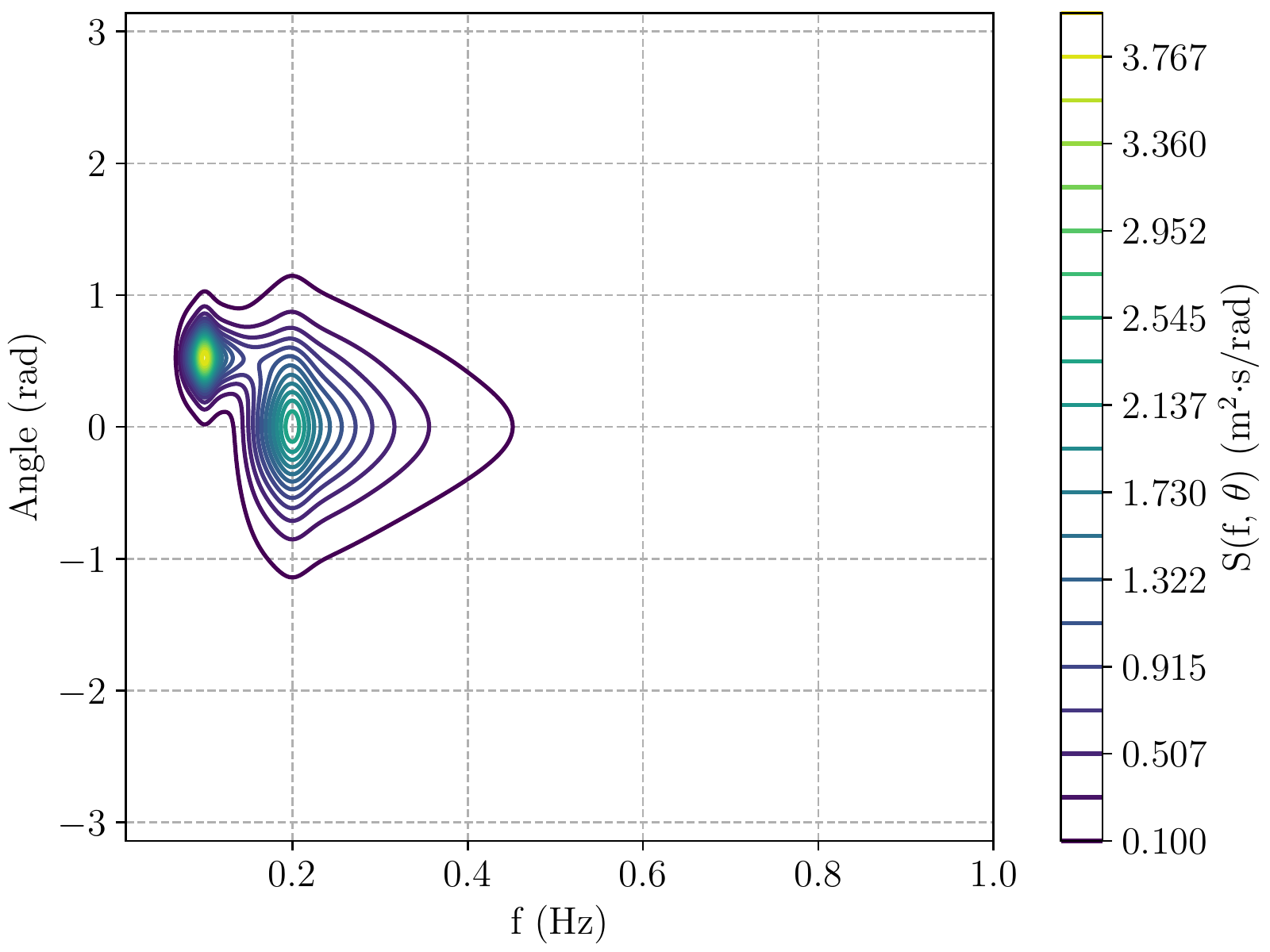}
    }
    \caption{Double-peaked wave spectrum. Left: frequency spectrum; right: frequency-directional spectrum}
    \label{fig:2peak-spectrum}
\end{figure}
The results of the optimization run are reported in Fig.~\ref{fig:15-2peak}.
\begin{figure}[h!]
\centering
\subfloat{\includegraphics[scale=0.37]{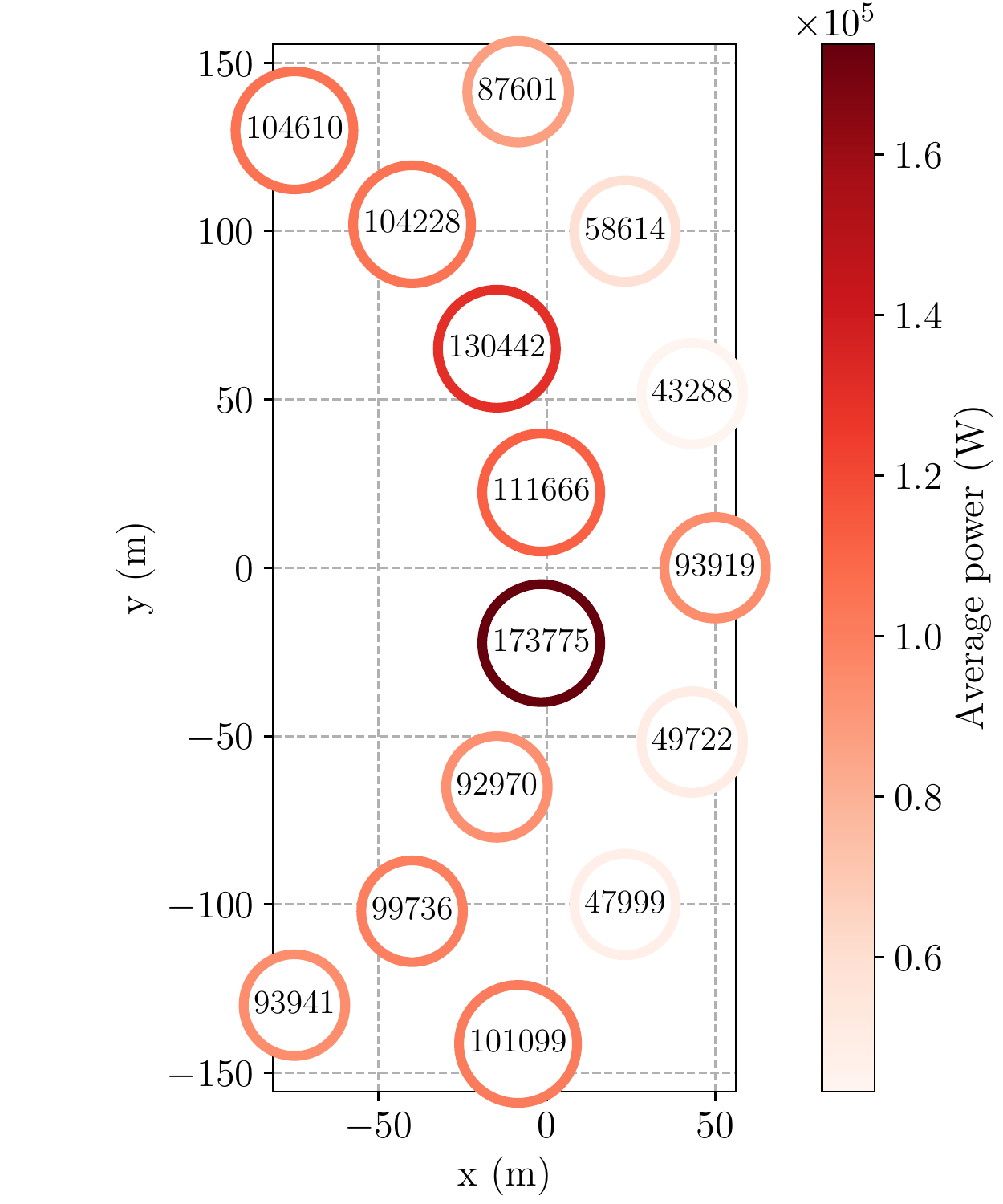}}
\subfloat{\includegraphics[scale=0.37]{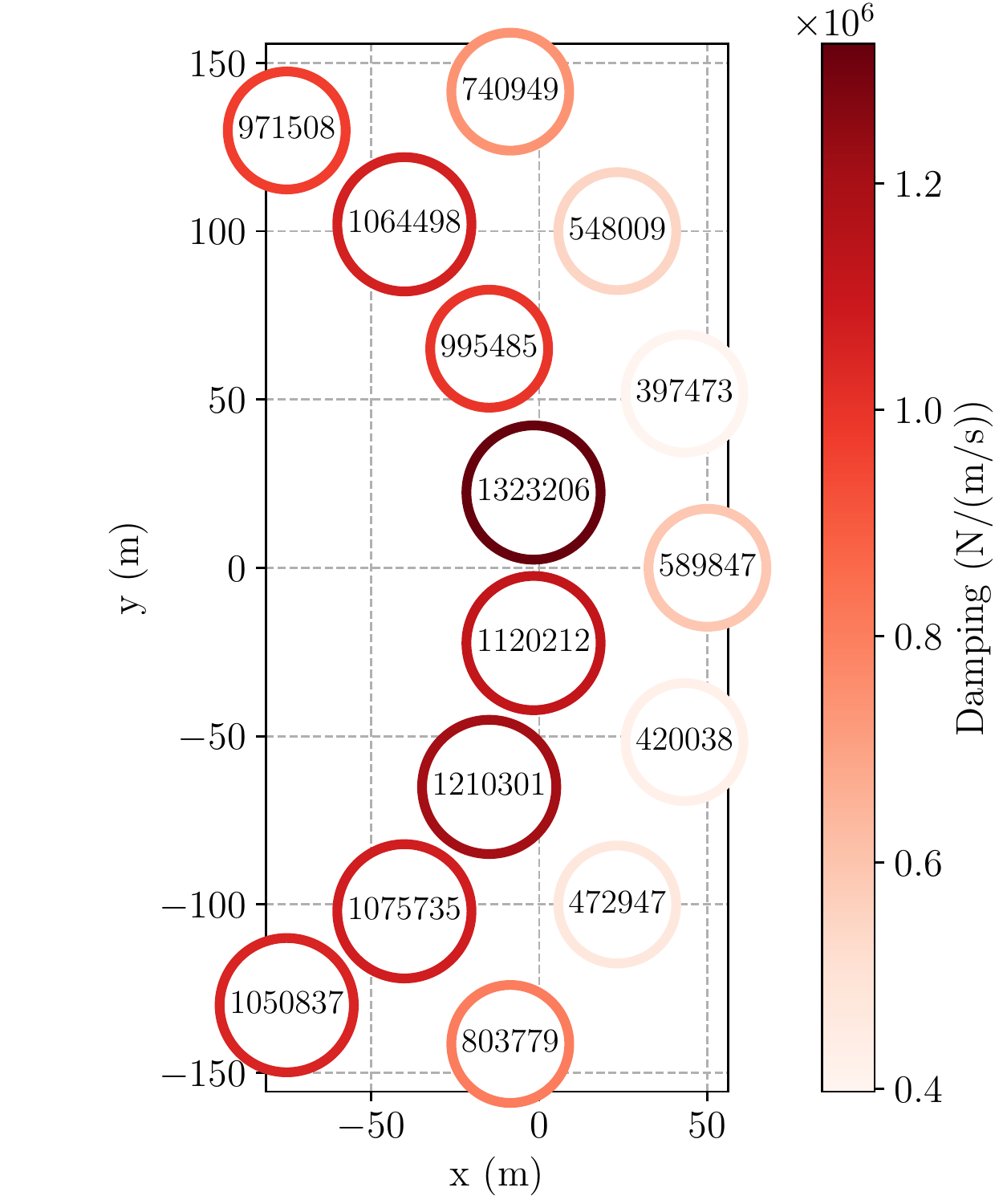}}
\subfloat{\includegraphics[scale=0.37]{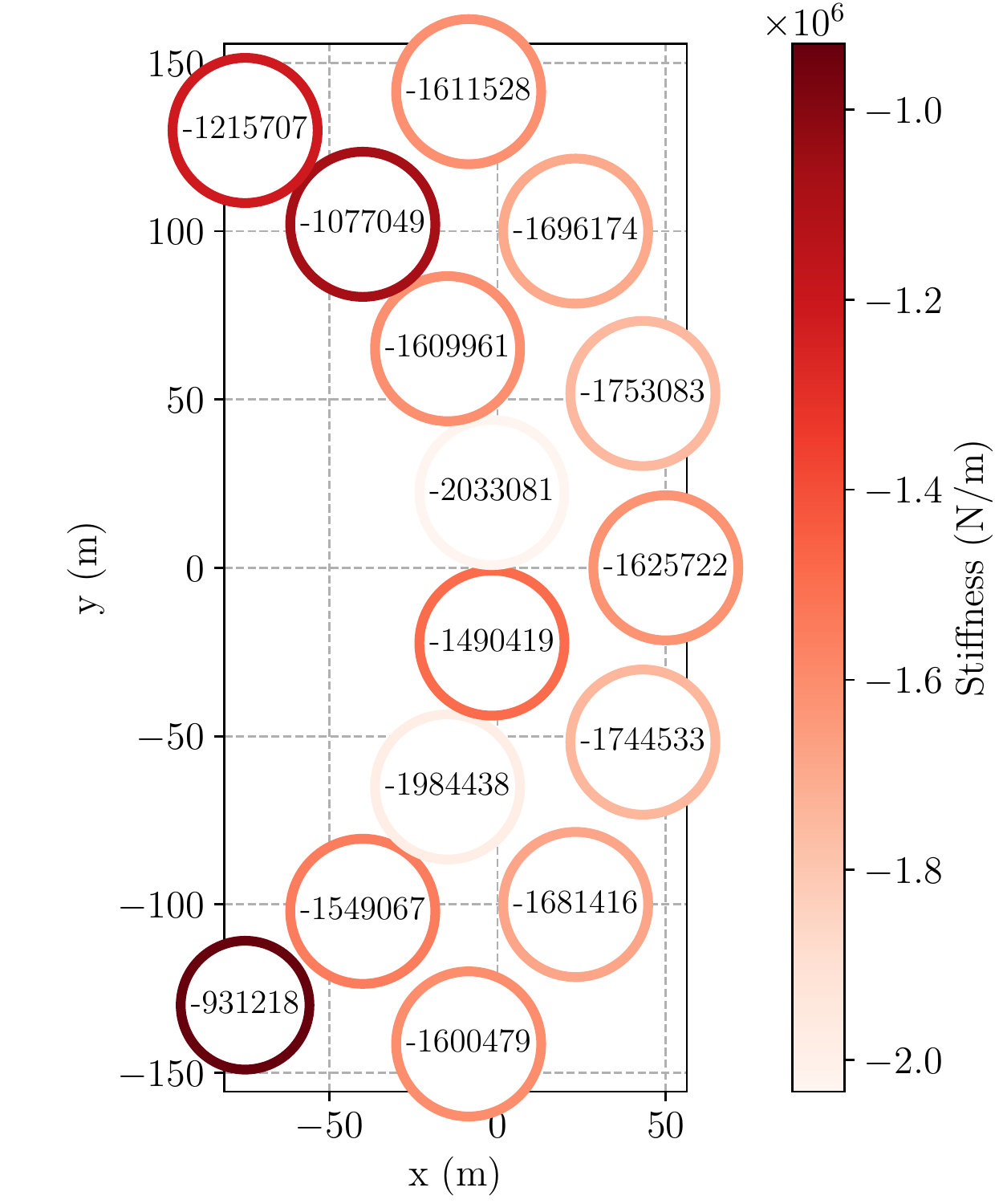}}
\caption{Map of power (left), damping (center) and stiffness (right) for 15 bodies, double-peaked spectrum}
\label{fig:15-2peak}
\end{figure}
Let us discuss the obtained results from a physical point of view. First, it is possible to see that the distributions of damping and stiffness parameters is not symmetric with respect to the $x$ axis. This is due to the asymmetry in the wave directions. 
More interestingly, one can observe that the parameters are not clearly clustered: even though one may expect to obtain two clusters, dealing with the two peaks in the spectrum independently, the optimal solution presents a more intricate configuration, with a large spread in the array of the values of stiffness and damping coefficients.

\section{Discussion and conclusions}\label{sec:concl}
In this work, a new computational framework for the robust optimization of arrays of WEC was proposed and tested.
In particular, we modelled an optimization problem for the maximization of the power of WEC parks with respect to the individual control damping and stiffness coefficients of each device. The results are robust with respect to the incident wave direction, which is treated as a random variable.
The numerical solution to this stochastic optimization problem is obtained as a combination of penalty iterations and stochastic approaches, namely robust SA and SAA based on Monte Carlo or Gauss-Legendre quadrature integration.

The results of our numerical simulations are in agreement with the ones presented (for deterministic problems and unidirectional waves) in classical references in the field of marine engineering, like, e.g., \cite{Babarit2013}, where it was observed that downwave rows are in general less productive than upwave rows. This behaviour is made even more evident by optimization, especially in square arrangements of bodies, where the difference in power between the first and the last row generally increases. Upwave bodies extract more power than they would in isolated conditions, so they benefit from the interactions with downwave bodies. These positive interactions might be impeded or reduced by the slamming constraint. In particular, it is possible that bodies tuned to have admissible oscillations individually (i.e., in isolated conditions) will violate the constraint when installed in an array. In our tests, we observed that when the initial guess is feasible, then the proposed optimization framework leads to performance increases of a few percentage points. When the initial guess is unfeasible, the method leads to a condition that is feasible  and that might have a smaller power than the unfeasible initial guess. In most of the explored cases, allowing the stiffness to have negative values provided larger power that enforcing positive stiffness. In this case, technological feasibility considerations should be made about whether such solution is realizable. Finally, by proposing an example with a double-peaked wave spectrum, we showed that our computational framework can be used to control and optimize more complex configurations, so that it is capable to provide useful indications for the preliminary control design of realistic WEC arrays.

\section*{Acknowledgments}
Funding sources:
all authors are members of the GNCS Indam group. The present research is part of the activities of “Dipartimento di Eccellenza 2023-2027.”

\bibliography{refers}{}
\bibliographystyle{abbrv}
\end{document}